\tikzset{
	symbol/.style={
		draw=none,
		every to/.append style={
			edge node={node [sloped, allow upside down, auto=false]{$#1$}}}
	}
}
\tikzset{
	solid node/.style={circle,draw,inner sep=1.5,fill=black},
	hollow node/.style={circle,draw,inner sep=1.5},
	rectangle node/.style={rectangle,draw,align=center}
}
\begin{document}
\pdfrender{StrokeColor=black,TextRenderingMode=2,LineWidth=0.2pt}	
	
	\title{On the ranks and implicit constant fields of valuations induced by pseudo monotone sequences}
	\author{Arpan Dutta}
	%------    GENERAL MACROS    -----
	\def\NZQ{\mathbb}               % the font for N,Z,Q,R,C
	\def\NN{{\NZQ N}}
	\def\QQ{{\NZQ Q}}
	\def\ZZ{{\NZQ Z}}
	\def\RR{{\NZQ R}}
	\def\CC{{\NZQ C}}
	\def\AA{{\NZQ A}}
	\def\BB{{\NZQ B}}
	\def\PP{{\NZQ P}}
	\def\FF{{\NZQ F}}
	\def\GG{{\NZQ G}}
	\def\HH{{\NZQ H}}
	\def\UU{{\NZQ U}}
	\def\P{\mathcal P}
	
	%------------------------------------------------
	% 
	%
	\let\union=\cup
	\let\sect=\cap
	\let\dirsum=\oplus
	\let\tensor=\otimes
	\let\iso=\cong
	\let\Union=\bigcup
	\let\Sect=\bigcap
	\let\Dirsum=\bigoplus
	\let\Tensor=\bigotimes
	
	\theoremstyle{plain}
	\newtheorem{Theorem}{Theorem}[section]
	\newtheorem{Lemma}[Theorem]{Lemma}
	\newtheorem{Corollary}[Theorem]{Corollary}
	\newtheorem{Proposition}[Theorem]{Proposition}
	\newtheorem{Problem}[Theorem]{}
	\newtheorem{Conjecture}[Theorem]{Conjecture}
	\newtheorem{Question}[Theorem]{Question}
	
	\theoremstyle{definition}
	\newtheorem{Example}[Theorem]{Example}
	\newtheorem{Examples}[Theorem]{Examples}
	\newtheorem{Definition}[Theorem]{Definition}
	
	\theoremstyle{remark}
	\newtheorem{Remark}[Theorem]{Remark}
	\newtheorem{Remarks}[Theorem]{Remarks}
	
	\newcommand{\trdeg}{\mbox{\rm trdeg}\,}
	\newcommand{\rr}{\mbox{\rm rat rk}\,}
	\newcommand{\rk}{\mbox{\rm rk}\,}
	\newcommand{\sep}{\mathrm{sep}}
	\newcommand{\ac}{\mathrm{ac}}
	\newcommand{\ins}{\mathrm{ins}}
	\newcommand{\res}{\mathrm{res}}
	\newcommand{\Gal}{\mathrm{Gal}\,}
	\newcommand{\ch}{\mathrm{char}\,}
	\newcommand{\Aut}{\mathrm{Aut}\,}
	\newcommand{\kras}{\mathrm{kras}\,}
	\newcommand{\dist}{\mathrm{dist}\,}
	\newcommand{\ord}{\mathrm{ord}\,}
	\newcommand{\dom}{\mathrm{dom}\,}
	\newcommand{\lex}{\mathrm{lex}\,}

	\newcommand{\n}{\par\noindent}
	\newcommand{\nn}{\par\vskip2pt\noindent}
	\newcommand{\sn}{\par\smallskip\noindent}
	\newcommand{\mn}{\par\medskip\noindent}
	\newcommand{\bn}{\par\bigskip\noindent}
	\newcommand{\pars}{\par\smallskip}
	\newcommand{\parm}{\par\medskip}
	\newcommand{\parb}{\par\bigskip}
	\let\epsilon\varepsilon
	\let\phi=\varphi
	\let\kappa=\varkappa
	
	\def \a {\alpha}
	\def \b {\beta}
	\def \s {\sigma}
	\def \d {\delta}
	\def \g {\gamma}
	\def \o {\omega}
	\def \l {\lambda}
	\def \th {\theta}
	\def \D {\Delta}
	\def \G {\Gamma}
	\def \O {\Omega}
	\def \L {\Lambda}
	%
	%           We print on A4 paper
	%
	\textwidth=15cm \textheight=22cm \topmargin=0.5cm
	\oddsidemargin=0.5cm \evensidemargin=0.5cm \pagestyle{plain}
	% ------    END OF GENERAL MACROS    -------

	\address{Department of Mathematics, IISER Mohali,
		Knowledge City, Sector 81, Manauli PO,
		SAS Nagar, Punjab, India, 140306.}
	\email{arpan.cmi@gmail.com}
	
	\date{\today}
	
	\thanks{}
	
	\keywords{Valuation, minimal pairs, key polynomials, pseudo-Cauchy sequences, pseudo monotone sequences, implicit constant fields}
	
	\subjclass[2010]{12J20, 13A18, 12J25}	
	
	\maketitle
	
%==========================================================================

\begin{abstract}
Given a valued field $(K,v)$ and a pseudo monotone sequence $E$ in $(K,v)$, one has an induced valuation $v_E$ extending $v$ to $K(X)$. After fixing an extension of $v_E$ to a fixed algebraic closure $\overline{K(X)}$ of $K(X)$, we show that the implicit constant field of the extension $(K(X)|K,v_E)$ is simply the henselization of $(K,v)$. We consider the question: given a value transcendental extension $w$ of $v$ to $K(X)$ and a pseudo monotone sequence $E$ in $(K,v)$, under which precise conditions is $w$ induced by $E$? The dual nature of pseudo convergent sequences of algebraic type and pseudo divergent sequences is also explored. Further, we provide a complete description of the various possibilities of the rank of the valuation $v_E$, provided that $v$ has finite rank. 
\end{abstract}

%==========================================================================
\section{Introduction}
A central problem in valuation theory is to study the set of all extensions of a valuation $v$ from a field $K$ to a rational function field $K(X)$. On the other hand, given such an extension $w$ of $v$ to $K(X)$, it is important to give a complete description of the valuation $w$. Several different objects have been introduced to tackle these interconnected problems. Minimal pairs of definition were introduced by Alexandru, Popescu and Zaharescu [\ref{AP sur une classe}, \ref{APZ characterization of residual trans extns}] to study residue transcendental extensions. The notion of key polynomials was introduced by MacLane [\ref{MacLane key pols}] which was later generalized by Vaqui\'{e} [\ref{Vaquie key pols}]. An alternative form of key polynomials have been recently developed by Spivakovsky and Novacoski [\ref{Nova Spiva key pol pseudo convergent}, \ref{Novacoski key poly and min pairs}]. Ostrowski [\ref{Ostrowski pcs}] introduced pseudo convergent sequences which were succesfully used by Kaplansky [\ref{Kaplansky}] to answer maximality questions for immediate extensions. Recently, this notion has been generalized to that of pseudo monotone sequences which has found significant use in the works of Peruginelli and Spirito [\ref{Peruginelli, Spirito - extend valns pseudo monotone}]. The notion of implicit constant fields has been introduced by Kuhlmann [\ref{Kuh value groups residue fields rational fn fields}].

\pars Our goal in this article is to study the extensions of a valued field $(K,v)$ induced by pseudo monotone sequences in $(K,v)$. In particular, given an extension $v_E$ of $v$ to $K(X)$ induced by a pseudo monotone sequence $E$ in $(K,v)$ and an extension of $v_E$ to a fixed algebraic closure $\overline{K(X)}$ of $K(X)$, we investigate the following:
\begin{itemize}
	\item the implicit constant field of the extension $(K(X)|K,v_E)$,
	\item the rank of the value group $v_E K(X)$.
\end{itemize}
It has recently been observed by Peruginelli and Spirito [\ref{Peruginelli, Spirito - extend valns pseudo monotone}] that every extension of a valuation $v$ from $K$ to $K(X)$ is induced by a pseudo monotone sequence in $(K,v)$ if and only if the $v$-adic completion $\hat{K}$ is algebraically closed. As a consequence, our results hold for all extensions $(K(X)|K,w)$ whenever $\hat{K}$ is algebraically closed. 

\pars Given a valued field extension $(K(X)|K,w)$ and an extension of $w$ to $\overline{K(X)}$, the \textbf{implicit constant field} of the extension is defined to be the relative algebraic closure of $K$ in the henselization of $K(X)$ and it is denoted by $IC(K(X)|K,w)$. In particular, $IC(K(X)|K,w)$ is a separable-algebraic extension of the henselization $K^h$ of $(K,v)$. It has been observed in [\ref{Dutta min fields implicit const fields}, Theorem 1.1 and Theorem 1.3] that explicit computation of $IC(K(X)|K,w)$ can be quite difficult in general. In this article we discover that
\[ IC(K(X)|K,v_E)= K^h \]
whenever $E$ is a pseudo monotone sequence in $(K,v)$ (see Theorem \ref{Thm K(X)|K is pure} and Theorem \ref{Thm pcs of alg type}). 

\pars The extension $(K(X)|K,v_E)$ is value transcendental if and only if $E$ is either pseudo convergent of algebraic type or pseudo divergent. We consider the question:
\begin{Question}\label{question}
	Given a value transcendental extension $(K(X)|K,w)$ and a pseudo monotone sequence $E$ in $(K,v)$, under which precise conditions is $w = v_E$?
\end{Question}

Some related equivalences for this equality to hold are provided in Theorem \ref{Thm pcs alg equivalences} and Theorem \ref{Thm E pds}. In particular, we observe the strong connections between pairs of definition for $v_E$ over $K$ and the limits of $E$, which play a crucial role throughout this paper. The duality of pseudo convergent sequences of algebraic type and pseudo divergent sequences is made clear in Theorem \ref{Thm pcs alg equivalences}$(iii)$ and Theorem \ref{Thm E pds}$(iii)$. In the corollary to Theorem \ref{Thm pcs alg equivalences} we observe that if $E$ is a Cauchy sequence and $(K(X)|K,w)$ is a value transcendental extension such that $X$ is a limit of $E$, then $w=v_E$. Similarly, if $E$ diverges to infinity and $X$ is not a limit of $E$, then $w=v_E$.   
 
\pars We also provide a complete characterization of the rank of the value group $v_E K(X)$, provided that $vK$ is of finite rank. It is well known that the rank either remains same or increases by one. We give a constructive description of the various possibilities (see Figures \ref{fig E pcs base level}, \ref{fig E pcs i level}, \ref{fig E pds base level}, \ref{fig E pds i level}). Since the value group remains unchanged when $E$ is a pseudo convergent sequence of transcendental type or a pseudo constant sequence, we only focus on the cases when $E$ is pseudo convergent of algebraic type or pseudo divergent. The duality observed in Theorem \ref{Thm pcs alg equivalences}$(iii)$ and Theorem \ref{Thm E pds}$(iii)$ comes to the fore at this juncture. 

\pars After fixing an extension $\overline{v}$ of $v$ to $\overline{K}$, we introduce the notion of supremum and infimum of a pseudo convergent sequence of algebraic type or a pseudo divergent sequence, when $(K,v)$ has finite rank. Thereafter, we observe in Theorem \ref{Thm rank v_E} that whenever $E$ is a Cauchy sequence or a pseudo convergent sequence of algebraic type such that the supremum of $E$ is contained in $\overline{v}\overline{K}$, we obtain $\rk v_E K(X) = \rk vK +1$. The analogous result holds true when $E$ diverges to infinity or $E$ is pseudo divergent such that the infimum of $E$ is contained in $\overline{v}\overline{K}$.        
 
\pars The paper is structured as follows: in Section \ref{Section min pair key pols} we give a quick tour of the theories of minimal pairs of definition and key polynomials. The definitions of pseudo monotone sequences and their limits are provided in Section \ref{Section pms}. In Section \ref{Section valns induced by pms} we define valuations induced by pseudo monotone sequences and study some of their basic properties. Section \ref{Section extns of v_E} deals with the extensions of $v_E$ to $\overline{K}(X)$. The definition of pure extensions is provided in Section \ref{Section pure extn}, where we also observe that $(K(X)|K,v_E)$ is a pure extension whenever $E$ is not a pseudo convergent sequence without a limit in $K$. The remaining case is considered in Section \ref{Section pcs of alg type}. Question \ref{question} is also investigated in this section, under the assumption that $E$ is pseudo convergent of algebraic type. The extension $(K(X)|K,v_E)$ when $E$ is pseudo divergent is studied in greater detail in Section \ref{Section E is a pds}. Finally, we define the valuation theoretic concept of rank and compute the rank of the valuation $v_E$ in Section \ref{Section rank v_E}.

%==========================================================================

\section*{Acknowledgements}
This work was supported by the Post-Doctoral Fellowship of the National Board of Higher Mathematics, India. The author would like to thank Giulio Peruginelli for directing his attention towards possible interrelation between implicit constant fields and dominating degrees, and for pointing out a mistake in an earlier version of this paper.

%============================================================================

\section*{Notations}
A valued field $(K,v)$ is a field $K$ endowed with a valuation $v$. The value group of $K$ is denoted by $vK$ and the residue field by $Kv$. The value of an element $a\in K$ is denoted by $va$. By an extension of valued fields $(L|K,v)$ we will mean that $L|K$ is an extension of fields, $v$ is a valuation on $L$ and $K$ is equipped with the restricted valuation. If $L$ and $K$ are subfields of a larger valued field $(F,v)$, then we will also mean that both $L$ and $K$ are equipped with the restricted valuations.

%===============================================================================

\section{Minimal pairs and key polynomials}\label{Section min pair key pols}
A valued field extension $(K(X)|K,v)$ satisfies the famous Abhyankar inequality:
\[ \rr \frac{vK(X)}{vK} + \trdeg[K(X)v:Kv]\leq 1, \]
where $\rr vK(X)/vK$ is the $\QQ$-dimension of the divisible hull $\QQ\tensor_{\ZZ}(vK(X)/vK)$. This inequality is a consequence of [\ref{Bourbaki}, Chapter VI, \S 10.3, Theorem 1]. The extension $(K(X)|K,v)$ is said to be \textbf{valuation transcendental} if equality holds in the above inequality. The extension is said to be \textbf{value transcendental} if $\rr vK(X)/vK = 1$ and is said to be \textbf{residue transcendental} if $\trdeg [K(X)v:Kv] = 1$.  

\pars Take a polynomial $f(X)\in K[X]$ and $a\in K$. Write $f(X) = \sum_{i=0}^{n} c_i (X-a)^i$ where $c_i\in K$. Take an element $\a$ in some ordered abelian group extending $vK$. We define the map $v_{a,\a} : K[X] \longrightarrow vK + \ZZ\a$ by setting $v_{a,\a} f:= \min\{ v c_i + i\a \}$ and extend it canonically to $K(X)$. Then $v_{a,\a}$ is a valuation transcendental extension of $v$ from $K$ to $K(X)$ [\ref{Kuh value groups residue fields rational fn fields}, Lemma 3.10]. Further, $v_{a,\a}$ is value transcendental if and only if $\a\notin\QQ\tensor_{\ZZ}vK$ [\ref{Kuh value groups residue fields rational fn fields}, Lemma 3.10]. The valuation $v_{a,\a}$ is said to be a \textbf{monomial valuation}. It is known that the extension $(K(X)|K,v)$ is value (residue) transcendental if and only if $(\overline{K}(X)|\overline{K},\overline{v})$ is also value (residue) transcendental [\ref{Kuh value groups residue fields rational fn fields}, Lemma 3.3], where $\overline{v}$ is some extension of $v$ to $\overline{K}(X)$. Further, $(\overline{K}(X)|\overline{K},\overline{v})$ is valuation transcendental if and only if the valuation $\overline{v}$ on $\overline{K}(X)$ is a monomial valuation [\ref{Kuh value groups residue fields rational fn fields}, Theorem 3.11]. If we have $\overline{v}=\overline{v}_{a,\a}$ on $\overline{K}(X)$ for some $a\in\overline{K}$ and $\a\in \overline{v}\overline{K}(X)$, then we say that $(a,\a)$ is a \textbf{pair of definition for $v$ over $K$}. It can be easily checked that $(a,\a)$ is a pair of definition for $v$ over $K$ if and only if
\[ \a = \overline{v}(X-a) = \max \overline{v}(X-\overline{K}), \]  
where $\overline{v}(X-\overline{K}) := \{ \overline{v}(X-c)\mid c\in\overline{K} \}$ (see [\ref{Dutta min fields implicit const fields}, Section 2] for a proof). A valuation transcendental extension can have multiple pairs of definition. The following result, proved in [\ref{AP sur une classe}, Proposition 3], gives us a relation between the different pairs of definition:

\begin{Lemma}
	Take $a,a^\prime$ in $\overline{K}$ and $\a,\a^\prime$ in some ordered abelian group containing $\overline{v}\overline{K}$. Then,
	\[ \overline{v}_{a,\a} = \overline{v}_{a^\prime,\a^\prime} \text{ if and only if } \a=\a^\prime \text{ and } \overline{v}(a-a^\prime)\geq \a. \]
\end{Lemma}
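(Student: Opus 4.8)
The plan is to exploit the multiplicativity of the monomial valuation together with the fact that every polynomial splits into linear factors over the algebraically closed field $\overline{K}$. Since $\overline{v}_{a,\a}$ and $\overline{v}_{a',\a'}$ are valuations, each is completely determined by the values it assigns to constants and to the linear polynomials $X-b$ with $b\in\overline{K}$: writing $f = c\prod_j (X-b_j)$ with $c,b_j\in\overline{K}$ and using $\overline{v}_{a,\a}(fg)=\overline{v}_{a,\a}f+\overline{v}_{a,\a}g$, the value of an arbitrary $f$ is reconstructed from these. The first step is therefore the elementary computation
\[ \overline{v}_{a,\a}(X-b) = \min\{\overline{v}(b-a),\,\a\}, \]
obtained by writing $X-b = (X-a)+(a-b)$, so that the coefficients of $X-b$ in powers of $(X-a)$ are $c_0 = a-b$ and $c_1 = 1$, and reading off the definition $\overline{v}_{a,\a}f = \min\{\overline{v}c_i + i\a\}$.

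For the forward implication I would evaluate the assumed equality $\overline{v}_{a,\a}=\overline{v}_{a',\a'}$ at the two linear polynomials $X-a$ and $X-a'$. Directly from the definition $\overline{v}_{a,\a}(X-a)=\a$ and $\overline{v}_{a',\a'}(X-a')=\a'$, so the displayed formula gives
\[ \a = \overline{v}_{a',\a'}(X-a) = \min\{\overline{v}(a-a'),\a'\} \quad\text{and}\quad \a' = \overline{v}_{a,\a}(X-a') = \min\{\overline{v}(a-a'),\a\}. \]
The first equation forces $\a\le\a'$ and the second $\a'\le\a$, whence $\a=\a'$; substituting this back into either equation yields $\a = \min\{\overline{v}(a-a'),\a\}$, i.e. $\overline{v}(a-a')\ge\a$, as required.

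For the converse, assume $\a=\a'$ and $\overline{v}(a-a')\ge\a$. By the reduction above it suffices to verify $\overline{v}_{a,\a}(X-b)=\overline{v}_{a',\a}(X-b)$ for every $b\in\overline{K}$, i.e. that $\min\{\overline{v}(b-a),\a\}=\min\{\overline{v}(b-a'),\a\}$. This is a short ultrametric case analysis based on $b-a' = (b-a)+(a-a')$. If $\overline{v}(b-a)\ge\a$, then since also $\overline{v}(a-a')\ge\a$ the triangle inequality gives $\overline{v}(b-a')\ge\a$, so both minima equal $\a$. If instead $\overline{v}(b-a)<\a\le\overline{v}(a-a')$, the two summands have distinct values, forcing $\overline{v}(b-a')=\overline{v}(b-a)<\a$, so both minima equal $\overline{v}(b-a)$. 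Thus the valuations agree on all linear factors and on constants, and by multiplicativity on all of $\overline{K}(X)$.

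The only genuinely structural point is the opening reduction: that a monomial valuation on $\overline{K}(X)$ is recovered from its restriction to linear polynomials via multiplicativity, which is precisely where the algebraic closedness of $\overline{K}$ enters. Everything after that is the two min-computations above, so I anticipate no serious obstacle; one should merely note that the formula $\overline{v}_{a,\a}(X-b)=\min\{\overline{v}(b-a),\a\}$ is exactly the base case of the multiplicative reconstruction, so that no circularity is introduced.
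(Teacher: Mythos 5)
Your proof is correct, but note that the paper itself does not prove this lemma at all: it is quoted verbatim from Alexandru--Popescu--Zaharescu (the reference cited just before the statement, Proposition 3 there), so there is no internal proof to compare against, and a self-contained argument like yours is genuinely additional content. The two computations at the heart of your argument are right: expanding $X-b=(X-a)+(a-b)$ gives $\overline{v}_{a,\alpha}(X-b)=\min\{\overline{v}(a-b),\alpha\}$ directly from the definition; evaluating the assumed equality at $X-a$ and $X-a'$ yields $\alpha\leq\alpha'$ and $\alpha'\leq\alpha$ and then $\overline{v}(a-a')\geq\alpha$; and the converse is exactly the two-case ultrametric check you describe. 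The structural reduction is also sound, and you correctly identify its two ingredients: that $\overline{v}_{a,\alpha}$ and $\overline{v}_{a',\alpha'}$ are honest valuations (this is not obvious from the $\min$-formula and is what the paper imports from Kuhlmann's Lemma 3.10, so you are entitled to multiplicativity), and that $\overline{K}$ is algebraically closed, so every polynomial splits into linear factors and every rational function is a quotient of such products; together with the fact that both valuations restrict to $\overline{v}$ on constants, agreement on linear polynomials forces agreement on all of $\overline{K}(X)$. What your route buys is transparency and independence from the French-language source; what the citation buys the paper is brevity. No gaps.
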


\begin{Definition}
	A pair $(a,\a)\in\overline{K}\times \overline{v}\overline{K}(X)$ is said to be a \textbf{minimal pair of definition for $v$ over $K$} if the following conditions are satisifed:
	\sn (MP1) $\a = \overline{v}(X-a) = \max \overline{v}(X-\overline{K})$,
	\n (MP2) $\overline{v}(a-b)\geq\a\Longrightarrow [K(b):K]\geq [K(a):K]$ for all $b\in\overline{K}$.
\end{Definition}

Take a polynomial $f(X)\in K[X]$. We define,
\[ \d(f):=\max\{ \overline{v}(X-c)\mid c\text{ is a root of }f\}.  \]
It has been observed in [\ref{Novacoski key poly and min pairs}] that $\d(f)$ is independent of the choice of the extension of $v$ to $\overline{K}(X)$. A monic polynomial $Q(X)\in K[X]$ is said to be a \textbf{key polynomial for $v$ over $K$} if $\d(f)<\d(Q)$ for all $f(X)\in K[X]$ such that $\deg f < \deg Q$. Observe that a monic linear polynomial is a key polynomial by definition. 

\pars Given a polynomial $f(X)\in K[X]$ and a monic polynomial $Q(X)\in K[X]$, we have a unique expansion $f = \sum_{i=0}^{n} f_i Q^i$, where $f_i (X)\in K[X]$ with $\deg f_i < \deg Q$. We define a map $v_Q: K[X]\longrightarrow vK(X)$ by setting $v_Q(f):= \min \{ v f_i + ivQ \}$, and extend it canonically to $K(X)$. It is known that $v_Q$ is a valuation on $K(X)$ whenever $Q$ is a key polynomial for $v$ over $K$ [\ref{Nova Spiva key pol pseudo convergent}, Proposition 2.6]. A family $\L$ of key polynomials for $v$ over $K$ is said to form a \textbf{complete sequence of key polynomials for $v$ over $K$} if the following conditions are satisfied:
\sn (CSKP1) $\d(Q)\neq \d(Q^\prime)$ for $Q,Q^\prime\in\L$ with $Q\neq Q^\prime$,
\n (CSKP2) $\L$ is well ordered with respect to the ordering given by $Q<Q^\prime$ if and only if $\d(Q)<\d(Q^\prime)$,
\n (CSKP3) for any $f(X)\in K[X]$, there exists some $Q\in\L$ such that $vf = v_Qf$. 

%Observe that if $(a,\a)$ is a pair of definition for $v$ over $K$, then $v = v_{a,\a} = v_Q$ where $Q(X):= X-a \in K[X]$.

\begin{Lemma}\label{Lemma v(y-K)}
	Let $(K(y)|K,v)$ be an extension of valued fields. Assume that $\a\in v(y-K)$ such that $\a\notin vK$. Then $\a = \max v(y-K)$.
\end{Lemma}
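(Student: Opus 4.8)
The plan is to fix an element $a\in K$ that realizes $\a$, i.e.\ with $\a = v(y-a)$, and then to show that $v(y-c)\leq \a$ for every $c\in K$. Since the value $\a$ is itself attained (at $c=a$), establishing this upper bound immediately yields $\a = \max v(y-K)$.

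First I would take an arbitrary $c\in K$ and split $y-c = (y-a) + (a-c)$. If $c=a$ there is nothing to do, so assume $c\neq a$; then $a-c$ is a nonzero element of $K$ and hence $v(a-c)\in vK$. The crucial point is that the hypothesis $\a\notin vK$ guarantees $\a = v(y-a) \neq v(a-c)$, because the latter value lies in $vK$. This is exactly the situation in which the ultrametric inequality is forced to be an equality, giving
\[ v(y-c) = v\bigl((y-a)+(a-c)\bigr) = \min\{\a,\, v(a-c)\} \leq \a. \]
As $c\in K$ was arbitrary, $\a$ dominates every element of $v(y-K)$, and the proof is complete.

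There is no genuine obstacle here: the entire content of the argument is the observation that membership of $\a$ in the complement of $vK$ prevents $\a$ from ever coinciding with a value $v(a-c)$ arising from $K$, so that the equality case of the strong triangle inequality always applies. The finiteness of the value group or any structure of the extension $K(y)|K$ plays no role; the statement is purely a consequence of the valuation axioms together with the irrationality of $\a$ relative to $vK$.
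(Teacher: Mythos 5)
Your proof is correct and takes essentially the same approach as the paper: both rest on the fact that $\alpha\notin vK$ forces the two terms in the decomposition $y-c=(y-a)+(a-c)$ to have distinct values, so the ultrametric inequality becomes an equality. The paper merely phrases this as a contradiction (supposing $v(y-b)>\alpha$ for some $b\in K$ and deducing $\alpha=v(a-b)\in vK$), whereas you argue directly that $v(y-c)\leq\alpha$ for every $c\in K$; the content is identical.
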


\begin{proof}
	Take $a\in K$ such that $\a:= v(y-a)\notin vK$. Suppose that there exists $b\in K$ such that $v(y-b) > v(y-a) = \a$. It then follows from the triangle inequality that $\a = v(a-b)\in vK$, which contradicts our assumption. 
\end{proof}

%===========================================================================

\section{Pseudo monotone sequences}\label{Section pms}
For the remainder of this article, we will assume that $(K,v)$ is a valued field and $E:=\{z_{\nu}\}_{\nu<\l}$ is a sequence in $K$ where $\l$ is some limit ordinal. 

\begin{Definition}
	The sequence $E$ is said to be a \textbf{pseudo convergent sequence} (pcs) if 
	\[ v(z_{\nu_1} - z_{\nu_2}) < v(z_{\nu_2} - z_{\nu_3}) \text{ for all } \nu_1 < \nu_2 < \nu_3 < \l. \]
\end{Definition}

Observe that $v(z_\nu - z_\mu) = v(z_\nu - z_{\nu+1})$ for all $\mu>\nu$. We set $\d_\nu:= v(z_\nu - z_{\nu+1})$. By definition, $\{\d_\nu\}_{\nu<\l}$ is a strictly increasing sequence. 

\begin{Definition}
	The sequence $E$ is said to be \textbf{pseudo divergent sequence} (pds) if 
	\[ v(z_{\nu_1} - z_{\nu_2}) > v(z_{\nu_2} - z_{\nu_3}) \text{ for all } \nu_1 < \nu_2 < \nu_3 < \l. \]
\end{Definition}

Observe that $v(z_\nu - z_\mu) = v(z_\nu - z_{\mu^\prime})$ for all $\mu^\prime, \mu<\nu$. We set $\d_\nu:= v(z_\nu - z_\mu) \text{ for all } \mu<\nu$. By definition, $\{\d_\nu\}_{\nu<\l}$ is a strictly decreasing sequence.

\begin{Definition}
	 The sequence $E$ is said to be a \textbf{pseudo constant sequence} (pcts) if
	\[ v(z_\nu - z_\mu) = v(z_{\nu^\prime} - z_{\mu^\prime}) \text{ for all } \nu\neq \mu, \, \nu^\prime\neq\mu^\prime. \]
\end{Definition}

We define $\d:= v(z_\nu - z_\mu) \text{ for all } \nu\neq\mu$. If we are making general statements about the sequence $E$ without differentiating between the cases, we will write $\d_\nu$ to mean $\d$ in the pcts case.

\begin{Definition}
	A pcs $E$ is said to be a \textbf{Cauchy sequence} if $\{ \d_\nu \}_{\nu<\l}$ is cofinal in $vK$. A pds $E$ will be said to be \textbf{diverging to infinity} if $\{\d_\nu \}_{\nu<\l}$ is coinitial in $vK$.
\end{Definition}

\begin{Definition}
	The sequence $E$ is said to be \textbf{pseudo monotone sequence} (pms) if is either a pcs, or pds, or pcts. 
\end{Definition}

\begin{Definition}
	An element $y\in K$ is said to be a \textbf{limit} of $E$ if 
	\[ v(y-z_\nu) = \d_\nu \text{ for all } \nu\text{ sufficiently large}. \]
\end{Definition}
 
Hence $y$ is a limit if the sequence  
\[ \{ v(y-z_\nu) \}_{\nu<\l} \text{ is } \begin{cases}
	\text{ultimately strictly increasing,} &\quad\text{when } E \text{ is a pcs,}\\
	\text{ultimately strictly decreasing,} &\quad\text{when } E \text{ is a pds,}\\
	\text{ultimately constant,} &\quad\text{when } E \text{ is a pcts.}
\end{cases} \]
Observe that a limit of $E$ is not necessarily uniquely determined. Here are some straightforward observations. 

\begin{Remark}
	Assume that $y$ is a limit of $E$. If $E$ is a pcs, then $\{ v(y-z_\nu) \}_{\nu<\l}$ is strictly increasing. If $E$ is a pcts, then $v(y-z_\nu) = \d$ for all but at most element of $E$. Both of these are consequences of the triangle inequality. Further, every element of $E$ is a limit of $E$ when $E$ is a pds or pcts. When $E$ is a pcs, then no element of $E$ is a limit of $E$.
\end{Remark}

\begin{Lemma}\label{Lemma limit}
	Let $(L|K,v)$ be an extension of valued fields and $E$ a pms in $(K,v)$. Assume that $y$ is a limit of $E$. Take any $b\in L$. Then $b$ is also a limit of $E$ if and only if $v(y-b)\geq\d_\nu$ for all $\nu$ sufficiently large.
\end{Lemma}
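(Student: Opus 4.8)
The plan is to prove both implications from the single identity relating the three differences $b-y$, $b-z_\nu$ and $y-z_\nu$, using only the ultrametric (triangle) inequality, and to split the argument according to the type of the pms $E$. The forward direction will be uniform across all three cases, while the reverse direction forces a case distinction, with the pseudo constant case being the delicate one.

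For the forward implication, suppose $b$ is a limit of $E$, so that $v(b-z_\nu)=\d_\nu$ for all $\nu$ sufficiently large. Writing $b-y=(b-z_\nu)-(y-z_\nu)$ and using that $y$ is also a limit (hence $v(y-z_\nu)=\d_\nu$ for large $\nu$), the triangle inequality immediately gives $v(b-y)\geq\min\{v(b-z_\nu),v(y-z_\nu)\}=\d_\nu$ for all $\nu$ sufficiently large. No case distinction is needed here.

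For the reverse implication, I assume $v(y-b)\geq\d_\nu$ for all $\nu\geq\nu_0$ and aim to deduce $v(b-z_\nu)=\d_\nu$ for large $\nu$ from $b-z_\nu=(b-y)+(y-z_\nu)$. When $E$ is a pcs or a pds, I would exploit the strict monotonicity of $\{\d_\nu\}_{\nu<\l}$: the hypothesis $v(b-y)\geq\d_\nu$ for all large $\nu$ can be upgraded to the strict inequality $v(b-y)>\d_\nu$ for all large $\nu$ by comparing with an adjacent index (using $\d_{\nu+1}>\d_\nu$ in the pcs case, and $\d_{\nu_0}>\d_\nu$ for $\nu>\nu_0$ in the pds case). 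Then $v(b-y)>\d_\nu=v(y-z_\nu)$ forces the ultrametric equality $v(b-z_\nu)=\d_\nu$, so $b$ is a limit.

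The main obstacle is the pcts case, where $\d_\nu\equiv\d$ is constant and the monotonicity trick is unavailable; indeed $v(b-y)=\d$ is then possible, giving only $v(b-z_\nu)\geq\d$ with strict inequality a priori allowed. To rule this out for large $\nu$, I would show that $v(b-z_\nu)>\d$ can hold for at most one index: if it held for distinct $\nu_1\neq\nu_2$, then $v(z_{\nu_1}-z_{\nu_2})\geq\min\{v(b-z_{\nu_1}),v(b-z_{\nu_2})\}>\d$, contradicting the defining property $v(z_{\nu_1}-z_{\nu_2})=\d$ of a pcts. Combined with the lower bound $v(b-z_\nu)\geq\d$ from the hypothesis, this yields $v(b-z_\nu)=\d$ for all but at most one $\nu$, hence for all large $\nu$, so $b$ is a limit.
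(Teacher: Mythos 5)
Your proof is correct and takes essentially the same route as the paper's: both directions reduce to the ultrametric inequality applied to the triangle $b$, $y$, $z_\nu$. The paper's own proof is a one-liner (``the result now follows from the ultrametric inequality''), and your argument simply makes explicit the case analysis it suppresses --- in particular the pseudo constant case, where $v(b-y)=\delta$ is genuinely possible and the equality $v(b-z_\nu)=\delta$ for large $\nu$ requires your ``at most one exceptional index'' observation.
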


\begin{proof}
	The fact that $y$ is a limit of $E$ implies that $v(y-z_\nu) = \d_\nu$ for all $\nu$ sufficiently large. By definition, $b$ is also a limit of $E$ if and only if $v(b-z_\nu) = \d_\nu = v(y-z_\nu)$ for all $\nu$ sufficiently large. The result now follows from the ultrametric inequality. 
\end{proof}

\begin{Lemma}\label{Lemma limit or const seq}
	Let $E$ be a pms in $(K,v)$ and take $y\in K$. Then either $y$ is a limit of $E$ or the sequence $S:= \{ v(y-z_\nu) \}_{\mu<\l}$ is ultimately constant.
\end{Lemma}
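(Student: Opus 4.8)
The plan is to write $\gamma_\nu := v(y-z_\nu)$, to assume that $y$ is \emph{not} a limit of $E$, and to deduce that $\{\gamma_\nu\}_{\nu<\l}$ is ultimately constant. The only tool needed is the sharp form of the ultrametric inequality, $v(a+b) = \min\{va,vb\}$ whenever $va\neq vb$, applied in conjunction with the defining property of $E$ that records each $v(z_\nu - z_\mu)$ as a prescribed $\delta$: it equals $\delta_{\min(\nu,\mu)}$ when $E$ is a pcs, $\delta_{\max(\nu,\mu)}$ when $E$ is a pds, and $\delta$ when $E$ is a pcts. I would treat the three types separately, since the direction of monotonicity of $\{\delta_\nu\}_{\nu<\l}$ enters the bookkeeping differently in each.

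The mechanism common to all three cases is this. Fix an index $\nu_0$ and, for every admissible $\mu$, expand $y-z_\mu = (y-z_{\nu_0}) + (z_{\nu_0}-z_\mu)$; here the first summand has value $\gamma_{\nu_0}$ and the second has a known value $\delta$. Whenever these two values differ, the sharp ultrametric inequality determines $\gamma_\mu$ exactly, and one reads off that $\gamma_\mu$ is forced either to coincide with the associated $\delta_\mu$ — in which case $y$ turns out to be a limit — or to take a constant value independent of $\mu$. For a pcs this settles matters immediately: if some $\gamma_{\nu_0} > \delta_{\nu_0}$ then $\gamma_\mu = \delta_{\nu_0}$ for all $\mu > \nu_0$; if some $\gamma_{\nu_0} < \delta_{\nu_0}$ then $\gamma_\mu = \gamma_{\nu_0}$ for all $\mu > \nu_0$; and if neither deviation ever occurs then $\gamma_\nu = \delta_\nu$ for every $\nu$, so $y$ is a limit. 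The pcts case is identical in spirit, using $v(z_\nu - z_\mu) = \delta$ for all $\nu\neq\mu$.

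The pds case carries the only real subtlety, and I expect it to be the main obstacle. Because $\{\delta_\nu\}_{\nu<\l}$ is now strictly decreasing, an index with $\gamma_{\nu_0} > \delta_{\nu_0}$ no longer produces a constant: since $\delta_\mu < \delta_{\nu_0} < \gamma_{\nu_0}$ for all $\mu > \nu_0$, the expansion instead forces $\gamma_\mu = \delta_\mu$ throughout, so $y$ is a limit. Assuming $y$ is not a limit therefore yields $\gamma_\nu \leq \delta_\nu$ for all $\nu$, and, again because $y$ is not a limit, some $\nu_0$ must satisfy $\gamma_{\nu_0} < \delta_{\nu_0}$. The delicate point is that the fixed value $\gamma_{\nu_0}$ must be compared against the decreasing sequence $\delta_\mu$: if $\delta_\mu$ ever drops strictly below $\gamma_{\nu_0}$, the expansion forces $\gamma_\mu = \delta_\mu$ for all larger $\mu$ and hence makes $y$ a limit, contradicting the hypothesis; thus $\delta_\mu > \gamma_{\nu_0}$ for every $\mu > \nu_0$, whence the expansion gives $\gamma_\mu = \gamma_{\nu_0}$ for all $\mu > \nu_0$ and $S$ is ultimately constant. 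Keeping track of whether the monotone $\delta_\mu$ crosses the fixed level $\gamma_{\nu_0}$ is the only genuinely case-sensitive step, and the fact that $\l$ is a limit ordinal guarantees that an equality $\delta_\mu = \gamma_{\nu_0}$ at a single index is always followed by strictly smaller values.
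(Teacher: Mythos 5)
Your proof is correct, and it takes a genuinely different route from the paper's. The paper argues by contradiction: assuming $y$ is not a limit and $S$ is not ultimately constant, it classifies the possible qualitative behaviors of $S$ (ultimately increasing, ultimately decreasing, or oscillating) and rules out each one via the triangle inequality and the monotonicity of $\{\d_\nu\}_{\nu<\l}$, separately for the pcs and pds cases. You instead argue directly: the decomposition $y-z_\mu = (y-z_{\nu_0})+(z_{\nu_0}-z_\mu)$ together with the sharp ultrametric equality lets you \emph{compute} $v(y-z_\mu)$ whenever $v(y-z_{\nu_0})$ differs from the relevant $\d$, and the dichotomy of the lemma falls out constructively. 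Your approach buys more information than the paper's: it identifies the eventual constant value of $S$ (namely $\min\{v(y-z_{\nu_0}),\d_{\nu_0}\}$ in the pcs case and $v(y-z_{\nu_0})$ in the pds case), and in the pcs case it shows the sharper fact that either $v(y-z_\nu)=\d_\nu$ for \emph{every} $\nu$ or $S$ is constant from the first index of deviation onward. The price is the case-sensitive bookkeeping you correctly flag in the pds case, where the comparison value $\d_\mu$ moves with $\mu$; your handling of the borderline equality $\d_{\mu_1}=v(y-z_{\nu_0})$ via the limit-ordinal hypothesis (so that $\mu_1+1<\l$ exists and $\d_{\mu_1+1}$ drops strictly below) is exactly the point a careless version of this argument would miss, and you got it right. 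The paper's contradiction-based proof avoids tracking explicit values but must instead exhaust three behavioral patterns per sequence type, so the two proofs are of comparable length; yours is arguably the more transparent one.
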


\begin{proof}
	If $E$ is a pcts, then the sequence $S$ is ultimately constant either way.
	
	\pars We now assume that $E$ is a pds and $y$ is not a limit. Suppose that $S$ is not ultimately constant. If $S$ is ultimately increasing, then we can find $\nu_1 < \nu_2 < \nu_3$ such that
	\[ v(y-z_{\nu_1}) <v(y-z_{\nu_2}) < v(y-z_{\nu_3}).  \]
	By the triangle inequality we obtain
	\[ \d_{\nu_2} = v(y-z_{\nu_1}) < v(y-z_{\nu_2}) = \d_{\nu_3} \]
	which contradicts the fact that the sequence $\{ \d_\nu\}_{\nu<\l}$ is strictly decreasing. If $S$ is ultimately decreasing, then the assumption that $y$ is not a limit implies that $S$ is not ultimately strictly decreasing. So we obtain $\nu_1 < \nu_2<\nu_3$ such that 
	\[ v(y-z_{\nu_1}) > v(y-z_{\nu_2}) = v(y-z_{\nu_3}). \]
	It now follows from the triangle inequality that $\d_{\nu_2} = \d_{\nu_3}$ which is again a contradiction. Thus $S$ is neither ultimately increasing nor ultimately decreasing. Now take any $\nu<\l$. The fact that $S$ is not ultimately increasing implies that there exists some $\mu_1 > \nu$ such that $v(y-z_\nu) > v(y-z_{\mu_1})$. The fact that $S$ is not ultimately decreasing implies that there exists $\mu_2 >\mu_1$ such that $v(y-z_{\mu_1}) < v(y-z_{\mu_2})$. Hence,
	\[ v(y-z_{\nu}) > v(y-z_{\mu_1}) < v(y-z_{\mu_2}). \] 
	We now obtain from the triangle inequality that $\d_{\mu_1} = v(y-z_{\mu_1}) = \d_{\mu_2}$ which again yields a contradiction. Thus $S$ is ultimately constant.
	
	\pars We now assume that $E$ is a pcs and $y$ is not a limit. Suppose that $S$ is not ultimately constant. If $S$ is ultimately increasing, then the assumption that $y$ is not a limit implies that $S$ is not ultimately strictly increasing. So we have a relation 
	\[ v(y-z_{\nu_1}) = v(y-z_{\nu_2}) < v(y-z_{\nu_3}). \]
	But this yields $\d_{\nu_1} = \d_{\nu_2}$ which is a contradiction. If $S$ is ultimately decreasing, then we have a relation
	\[ v(y-z_{\nu_1}) > v(y-z_{\nu_2}) > v(y-z_{\nu_3}) \]
	which yields $\d_{\nu_1} = v(y-z_{\nu_2}) > v(y-z_{\nu_3}) = \d_{\nu_2}$. But this contradicts the fact that $\{ \d_\nu \}_{\nu<\l}$ is strictly increasing. Hence $S$ is neither ultimately increasing nor ultimately decreasing. Thus we have a relation 
	\[ v(y-z_{\nu}) > v(y-z_{\mu_1}) < v(y-z_{\mu_2}) \]
	where $\nu<\mu_1<\mu_2$. The triangle inequality again leads to the contradiction $\d_{\nu} = v(y-z_{\mu_1}) = \d_{\mu_1}$. Hence $S$ is ultimately constant.
\end{proof}

	Fix an extension $\overline{v}$ of $v$ to $\overline{K}$. Take a polynomial $f(X)\in K[X]$. Write
	\[ f(X) = c(X-a_1)\dotsc(X-a_j)\dotsc(X-a_n), \]
	where $c\in K$ and $a_1,\dotsc,a_j$ are all the \textbf{root-limits} of $(f,E)$, that is, they are all the roots of $f$ which are also limits of $E$. It follows from Lemma \ref{Lemma limit or const seq} that $\{\overline{v}(z_\nu-a_i)\}_{\nu<\l}$ is ultimately constant for all $i\geq j+1$. We denote the ultimate constant value of $\{\overline{v}(z_\nu-a_i)\}_{\nu<\l}$ by $\b_i$. Then,  
	\[ vf(z_\nu) = j\d_\nu + \b \text{ for all } \nu \text{ sufficiently large}, \]
	where $\b := vc + \b_{j+1} + \dotsc + \b_n$. The fact that $vf(z_\nu), \d_\nu \in v K$ implies that 
	\[ \b\in  vK. \]
	The above observation naturally extends to rational functions. Take $\phi(X)\in K(X)$. Then there exist $d\in\ZZ$ and $\b\in vK$ such that 
	\[ v\phi(z_\nu) = d\d_\nu + \b \text{ for all } \nu \text{ sufficiently large}. \]
	The number $d$ is referred to as the \textbf{dominating degree of $\phi$ with respect to $E$} in [\ref{Peruginelli, Spirito - extend valns pseudo monotone}] and is denoted by $\deg\dom_E(\phi)$. Observe that if $\phi = f/g$ where $f,g\in K[X]$, then 
	\[  \deg\dom_E (\phi) = d = \#\{ \text{root-limits of } (f,E) \} - \#\{ \text{root-limits of } (g,E) \}. \]
	Finally, observe that $v \phi(z_\nu) - v\phi(z_{\nu+1}) = d(\d_\nu-\d_{\nu+1})$ for all $\nu$ sufficiently large. Since $v \phi(z_\nu) - v\phi(z_{\nu+1})$ and $\d_\nu-\d_{\nu+1}$ are solely dependent on $v$, we conclude that 
	\[ \text{the dominating degree $d$ is independent of the choice of the extension of $v$ to }\overline{K}. \]

	If $E$ is a pcs or pds, then $\{\d_\nu\}_{\nu<\l}$ is either strictly increasing or strictly decreasing. The following result is now straightfoward.
	
	\begin{Proposition}\label{Proposition vQ ultimately constant no root-limits}
		Take $\phi(X)\in K(X)$. Then $\{v\phi(z_\nu)\}_{\nu<\l}$ is ultimately constant if and only if either $E$ is a pcts, or $E$ is a pcs or pds such that $\deg\dom_E(\phi) = 0$.
	\end{Proposition}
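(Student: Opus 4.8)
The plan is to reduce everything to the structural formula established just above the statement, namely that for any $\phi(X)\in K(X)$ there is an integer $d=\deg\dom_E(\phi)$ and an element $\b\in vK$ with
\[ v\phi(z_\nu) = d\d_\nu + \b \quad\text{for all } \nu \text{ sufficiently large}. \]
Since $\b$ is a fixed element, the tail of $\{v\phi(z_\nu)\}_{\nu<\l}$ is ultimately constant if and only if the tail of $\{d\d_\nu\}_{\nu<\l}$ is. So the whole question is about the behaviour of $d\d_\nu$, and I would organize the argument by the three types of $E$.

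First I would dispose of the pcts case: here $\d_\nu=\d$ is a single fixed value for all $\nu$, so $d\d_\nu=d\d$ is constant and $v\phi(z_\nu)=d\d+\b$ is trivially ultimately constant, irrespective of $d$. This already gives one of the two families of sufficient conditions.

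Next I would treat the pcs and pds cases together, using that in both situations $\{\d_\nu\}_{\nu<\l}$ is strictly monotone (strictly increasing for a pcs, strictly decreasing for a pds). If $d=0$, then $d\d_\nu=0$ and $v\phi(z_\nu)=\b$ is ultimately constant. Conversely, if $d\neq 0$, I would invoke the fact that $vK$, being a value group, is a torsion-free ordered abelian group: from $\d_\nu\neq\d_{\nu+1}$ one gets $d(\d_\nu-\d_{\nu+1})\neq 0$, so $\{d\d_\nu\}$ is itself strictly monotone and hence not ultimately constant. This shows that for a pcs or pds, ultimate constancy of $\{v\phi(z_\nu)\}$ forces $\deg\dom_E(\phi)=0$, and conversely.

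Combining the cases yields the stated equivalence: $\{v\phi(z_\nu)\}_{\nu<\l}$ is ultimately constant precisely when $E$ is a pcts, or $E$ is a pcs or pds with $\deg\dom_E(\phi)=0$. There is no real obstacle here; the only point requiring a moment's care is the appeal to torsion-freeness of the value group to pass from $\d_\nu\neq\d_{\nu+1}$ to $d\d_\nu\neq d\d_{\nu+1}$ when $d\neq 0$, which is exactly what rules out ultimate constancy in the nonzero dominating-degree case.
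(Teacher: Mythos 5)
Your proof is correct and is essentially the argument the paper intends: the paper states the result as ``straightforward'' immediately after establishing the formula $v\phi(z_\nu)=d\d_\nu+\b$ and the strict monotonicity of $\{\d_\nu\}_{\nu<\l}$ in the pcs/pds cases, and your write-up simply makes that reduction explicit (with the correct observation that multiplying a strictly monotone sequence in an ordered abelian group by a nonzero integer $d$ keeps it strictly monotone, hence never ultimately constant).
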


It follows from our preceding discussions that when $E$ is a pcs, the sequence $\{ vf(z_\nu) \}_{\nu<\l}$ is either ultimately constant or ultimately strictly increasing, where $f(X)\in K[X]$. If the sequence is ultimately constant for all polynomials $f$ over $K$, then we say that $E$ is a \textbf{pcs of transcendental type}. In this case $E$ does not admit any algebraic limits. Otherwise, $E$ is said to be a \textbf{pcs of algebraic type}. In this case, we can find a polynomial $Q(X)\in K[X]$ of minimal degree such that $\{vQ(z_\nu)\}_{\nu<\l}$ is ultimately strictly increasing. Without any loss of generality, we can choose $Q$ to be monic. It can be observed that $Q(X)$ is irreducible over $K$. The polynomial $Q(X)$ is said to be an \textbf{associated minimal polynomial for $E$}.

%===========================================================================================

\section{Valuations induced by pseudo monotone sequences}\label{Section valns induced by pms}
Let $E:= \{ z_\nu \}_{\nu<\l}$ be a pms in $(K,v)$. Define,
\begin{align*}
	V_E &:= \{ \phi(X)\in K(X) \mid v\phi(z_\nu) \geq 0 \text{ for all } \nu \text{ sufficiently large} \},\\
	M_E &:= \{ \phi(X)\in K(X) \mid v\phi(z_\nu) > 0 \text{ for all } \nu \text{ sufficiently large} \}. 
\end{align*}
Then $(V_E,M_E)$ is a valuation domain of $K(X)$ [\ref{Peruginelli, Spirito - extend valns pseudo monotone}, Theorem 3.4]. We denote the corresponding valuation by $v_E$. It follows from the definition that $(V_E,M_E)$ dominates $(V,M)$, where $(V,M)$ is the valuation domain of the valuation $v$. Hence $v_E$ is an extension of $v$ to $K(X)$.

\begin{Proposition}\label{Prop v_E(f) in vK iff vf(z) constant}
	Take $\phi(X)\in K(X)$. Then $\{v\phi(z_\nu)\}_{\nu<\l}$ is ultimately constant if and only if $v_E(\phi)\in vK$. Further, in this case $v_E(\phi)=\b$ where $v\phi(z_\nu)=\b$ for all $\nu$ sufficiently large.
\end{Proposition}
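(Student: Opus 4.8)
The plan is to deduce both implications directly from the defining membership conditions for $V_E$ and $M_E$, invoking only that $v_E$ is the valuation attached to the valuation ring $(V_E,M_E)$ and that $v_E$ restricts to $v$ on $K$. The one elementary fact I would isolate and reuse is the following: for $\psi\in K(X)^\times$ one has $v_E(\psi)=0$ if and only if $\psi$ is a unit of $V_E$, that is, $\psi\in V_E$ and $\psi^{-1}\in V_E$. Unwinding the definition of $V_E$, this says precisely that $v\psi(z_\nu)\geq 0$ and $-v\psi(z_\nu)\geq 0$ for all $\nu$ sufficiently large, i.e. $v\psi(z_\nu)=0$ for all $\nu$ sufficiently large.

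For the forward implication I would begin from the hypothesis that $v\phi(z_\nu)=\b$ for all $\nu$ sufficiently large; note first that $\b\in vK$ automatically, since every $v\phi(z_\nu)$ lies in $vK$. Choosing $c\in K$ with $vc=\b$, the function $\phi/c$ satisfies $v(\phi(z_\nu)/c)=v\phi(z_\nu)-vc=0$ for all large $\nu$, so the observation above gives $v_E(\phi/c)=0$. Since $v_E$ extends $v$, this yields $v_E(\phi)=v_E(c)=vc=\b\in vK$, which is exactly the asserted value; in particular the ``Further'' statement is settled at the same time.

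For the converse I would assume $v_E(\phi)=\b\in vK$ and again pick $c\in K$ with $vc=\b$, so that $v_E(\phi/c)=v_E(\phi)-vc=0$. Thus $\phi/c$ is a unit of $V_E$, whence both $\phi/c$ and $c/\phi$ belong to $V_E$. Reading these two memberships through the definition of $V_E$ produces $v\phi(z_\nu)\geq vc=\b$ and $v\phi(z_\nu)\leq vc=\b$ for all $\nu$ sufficiently large, and together these force $v\phi(z_\nu)=\b$ ultimately; hence $\{v\phi(z_\nu)\}_{\nu<\l}$ is ultimately constant.

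I do not expect a genuine obstacle: once the unit characterization is recorded the argument is purely formal, and the scaling trick via a single $c\in K$ with $vc=\b$ is what converts statements about $vK$-membership into the symmetric pair of inequalities. The only point deserving a word of care is the legitimacy of evaluating and rescaling $\phi$ at the points $z_\nu$ — since the terms of a pms are pairwise distinct and $\phi$ has only finitely many zeros and poles, for all sufficiently large $\nu$ the value $v\phi(z_\nu)$ (and hence $v(\phi(z_\nu)/c)$) is a well-defined element of $vK$, as is already implicit in the construction of $v_E$.
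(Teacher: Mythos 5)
Your proof is correct, and while the forward direction (the scaling trick via $c\in K$ with $vc=\b$) is exactly the paper's argument, your converse takes a genuinely different and in fact more robust route. The paper characterizes $v_E(\phi/c)=0$ as $\phi/c\in V_E\setminus M_E$ and then asserts that ``it follows from the definition'' that $v(\phi(z_\nu)/c)=0$ for all $\nu$ sufficiently large; read literally, this is not immediate, because $\phi/c\notin M_E$ only says it is \emph{not} the case that $v(\phi(z_\nu)/c)>0$ ultimately, which (combined with $V_E$-membership) gives $v(\phi(z_\nu)/c)=0$ only for \emph{cofinally} many $\nu$. Upgrading ``cofinally'' to ``ultimately'' requires the earlier observation that $v\phi(z_\nu)=d\d_\nu+\b$ ultimately, so that the value sequence is ultimately monotone and hence cannot return to $0$ cofinally unless it is ultimately $0$. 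Your unit characterization sidesteps this entirely: from $v_E(\phi/c)=0$ you get both $\phi/c\in V_E$ and $c/\phi\in V_E$, which unwind to the two tail conditions $v\phi(z_\nu)\geq\b$ ultimately and $v\phi(z_\nu)\leq\b$ ultimately, and two tails of a sequence indexed by ordinals below the limit ordinal $\l$ intersect in a tail. So your argument is self-contained at the level of the definitions of $V_E$ and of a valuation ring, whereas the paper's converse silently leans on the dominating-degree structure; the paper's phrasing is terser but, as written, hides exactly the step you made explicit.
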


\begin{proof}
	 We first assume that $\{v\phi(z_\nu)\}_{\nu<\l}$ is ultimately constant and write the constant value as $\b$. Take $a\in K$ such that $va=\b$. Write $\phi^\prime(X):= \frac{1}{a}\phi(X)$. Then $v\phi^\prime(z_\nu) = v(1/a)+ v\phi(z_\nu)$. Thus $\{ v\phi^\prime(z_\nu) \}_{\nu<\l}$ is also ultimately constant attaining the value $\b - va = 0$ ultimately. So without any loss of generality we can assume that $\b = 0$, that is, $v \phi(z_\nu) = 0$ for all $\nu$ sufficiently large. It now follows from the definition that $\phi\in V_E\setminus M_E$, that is, $v_E(\phi)=0$.
	
	\pars We now assume that $v_E(\phi)\in vK$. Take $a\in K$ such that $va = v_E(\phi)$. Write $\phi^\prime(X):= \frac{1}{a}\phi(X)$. Then $v_E(\phi^\prime) = 0$, that is, $\phi^\prime\in V_E\setminus M_E$. It follows from the definition that $v(\phi^\prime(z_\nu)) =0$ for all $\nu$ sufficiently large. Consequently, $v\phi(z_\nu) = va$ for all $\nu$ sufficiently large and hence $\{v\phi(z_\nu)\}_{\nu<\l}$ is ultimately constant.
\end{proof}

Recall that $v \phi(z_\nu) = d\d_\nu +\b$ for all $\nu$ sufficiently large, where $\b\in vK$ and $d=\deg\dom_E(\phi)$. If $E$ is a pcts, then $\d_\nu=\d$ for all $\nu$ and hence $v_E(\phi)=d\d+\b\in vK$. If $E$ is a pcs or pds then $\{\d_\nu\}_{\nu<\l}$ is either strictly increasing or strictly decreasing. In this case $\{v\phi(z_\nu)\}_{\nu<\l}$ is ultimately constant if and only if $d=0$. It follows that 
\[ v_E(\phi)\in vK \text{ if and only if } E \text{ is a pcts, or $E$ is a pcs or pds and } \deg\dom_E(\phi)=0. \]
Conversely, take a polynomial $f(X)\in K[X]$. Then,
\[ v_E(f)\notin vK \text{ if and only if $E$ is a pcs or pds and } (f,E) \text{ admits root-limits}. \]
Now assume that $v_E(f)\notin vK$. Suppose that $v_E(f)$ is a torsion element modulo $vK$, that is, $nv_E(f)\in vK$ for some $n\in\NN$. Then $v_E(f^n)\in vK$ and consequently, no root of $f^n$ is a limit of $E$. It follows that no root of $f$ is a limit of $E$ which contradicts our assumption. Thus, 
\[ v_E(f)\notin vK \text{ if and only if } v_E(f) \notin \QQ\tensor_{\ZZ}vK. \]

%We define the set,
%\[ \mathcal{P}_E := \{ f(X)\in K[X] \mid f \text{ is monic, irreducible and } (f,E) \text{ admits root-limits} \}. \]
%It follows from our previous discussions that when $E$ is a pcs or pds, then
%\[ \mathcal{P}_E := \{ f(X)\in K[X] \mid f \text{ is monic, irreducible and } v_E(f)\notin \QQ\tensor_{\ZZ} vK \}. \]
%Now assume that $E$ is a pcs or pds, and $f,g\in \mathcal{P}_E$ of minimal degree. Observe that $\deg(f-g)<\deg f = \deg g$. Suppose that $v_E(f-g)\notin vK$. Then some root of $f-g$ is also a limit of $E$ and consequently some irreducible factor of $f-g$ is contained in $\mathcal{P}_E$. However, this contradicts the minimality of $\deg f$. Hence $v_E(f-g)\in vK$. It now follows from the triangle inequality that $v_E(f) = v_E(g)$.

\begin{Remark}\textbf{($E$ is a pcs)}
	Take any $\nu<\l$ and define $\phi(X):= \frac{X-z_{\nu+1}}{X-z_\nu}$. Observe that $v\phi(z_\mu) = \d_{\nu+1}-\d_\nu>0$ for all $\mu>\nu+1$. By definition, $v_E(\phi)>0$ and hence $v_E(X-z_{\nu+1}) > v_E(X-z_\nu)$. Thus the sequence $\{v_E(X-z_\nu)\}_{\nu<\l}$ is strictly increasing and hence $X$ is a limit of $E$.
	\pars If $E$ is a pcs of transcendental type, then it follows from [\ref{Kaplansky}, Theorem 2] that $(K(X)|K,v_E)$ is an immediate extension.
\end{Remark}

\begin{Remark}\textbf{($E$ is a pds)}
	Recall that every element of $E$ is a limit of $E$. It follows from the preceding discussions that $v_E (X-z_\nu)\notin \QQ\tensor_{\ZZ}vK$. Further, $v_E(X-z_\nu) \in v_E(X-K)$. It follows from Lemma \ref{Lemma v(y-K)} that
	\[ v_E(X-z_\nu) = \max v_E(X-K) \notin \QQ\tensor_{\ZZ}vK \text{ for all }\nu<\l. \]
	As a consequence we obtain that $\{v_E(X-z_\nu)\}_{\nu<\l}$ is a constant sequence. In particular, $X$ is not a limit of $E$. Suppose that $v_E (X-z_\mu) \geq \d_\nu$ for some $\mu,\nu<\l$. The fact that $\{\d_\nu\}_{\nu<\l}$ is strictly decreasing implies that $v_E (X-z_\mu) > \d_\nu$ for all $\nu$ sufficiently large. In light of Lemma \ref{Lemma limit}, the observation that $z_\mu$ is a limit of $E$ then implies that $X$ is a limit of $E$, contradicting our observations above. It follows that 
	\[ v_E (X-z_\nu) < \d_\nu \text{ for all }\nu<\l. \]
\end{Remark}

\begin{Remark}\textbf{($E$ is a pcts)}
	Observe that $v(z_\mu-z_\nu) = \d$ for all $\mu\neq \nu$. It follows from Proposition \ref{Prop v_E(f) in vK iff vf(z) constant} that $v_E (X-z_\nu) = \d$ for all $\nu<\l$. As a consequence, we obtain that $X$ is a limit of $E$. 
\end{Remark}

%=====================================================================================

\section{Extending $v_E$ to $\overline{K}(X)$}\label{Section extns of v_E}

Fix an extension $\overline{v}$ of $v$ to $\overline{K}$. Observe that $E$ is a pms in $(\overline{K},\overline{v})$. Consider the valuation $\overline{v}_E$ extending $\overline{v}$ to $\overline{K}(X)$ with the corresponding valuation domain given as follows:
\begin{align*}
	\overline{V}_E  &:= \{ \phi(X)\in\overline{K}(X)\mid \overline{v}\phi(z_\nu)\geq 0 \text{ for all }\nu \text{ sufficiently large} \},\\
	\overline{M}_E  &:= \{ \phi(X)\in\overline{K}(X)\mid \overline{v}\phi(z_\nu)> 0 \text{ for all }\nu \text{ sufficiently large} \}.
\end{align*}
We can directly observe that $(\overline{V}_E, \overline{M}_E)$ dominates $(V_E,M_E)$. Hence $\overline{v}_E$ is an extension of $v_E$ to $\overline{K}(X)$ [\ref{ZS2}, Ch VI, \S6, Lemma 1].

\begin{Proposition}\label{Prop v_E is monomial valn}
	Let $E$ be a pms in $(K,v)$ and assume that $a\in\overline{K}$ is a limit of $E$. Set $\a:= \overline{v}_E(X-a)$. Then 
	\[ \overline{v}_E = \overline{v}_{a,\a}. \]
\end{Proposition}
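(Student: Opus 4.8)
The plan is to prove the equality by checking that $\overline{v}_E$ and $\overline{v}_{a,\a}$ take the same value on every $f(X)\in\overline{K}[X]$; since a valuation on $\overline{K}(X)$ is determined by its restriction to $\overline{K}[X]$ via $\overline{v}_E(f/g)=\overline{v}_E(f)-\overline{v}_E(g)$, this suffices. Fix the Taylor expansion $f(X)=\sum_{i=0}^{n}c_i(X-a)^i$ with $c_i\in\overline{K}$, available since $a\in\overline{K}$. Because $\overline{v}_E(c_i)=\overline{v}(c_i)$ and $\overline{v}_E\big((X-a)^i\big)=i\a$, the ultrametric inequality immediately gives
\[ \overline{v}_E(f)\ \geq\ \min_{0\leq i\leq n}\{\overline{v}(c_i)+i\a\}\ =\ \overline{v}_{a,\a}(f). \]
Thus everything reduces to the reverse inequality, i.e.\ to showing that no cancellation occurs among the terms $c_i(X-a)^i$ under $\overline{v}_E$. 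I would first record that, since $a$ is a limit of $E$, one has $\overline{v}\big((X-a)(z_\nu)\big)=\overline{v}(z_\nu-a)=\d_\nu$ for all $\nu$ sufficiently large, so $a$ is the unique root-limit of $(X-a,E)$ and $\deg\dom_E(X-a)=1$.

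Suppose first that $E$ is a pcs of algebraic type or a pds, so that $\{\d_\nu\}_{\nu<\l}$ is not constant. Then by Proposition \ref{Prop v_E(f) in vK iff vf(z) constant} (applied over $\overline{K}$) we have $\a=\overline{v}_E(X-a)\notin\overline{v}\overline{K}$; in fact $\a\notin\QQ\tensor_{\ZZ}\overline{v}\overline{K}$, because for every $n\geq1$ the element $n\a=\overline{v}_E\big((X-a)^n\big)$ has dominating degree $n\neq0$ and hence does not lie in $\overline{v}\overline{K}$. Consequently the values $\overline{v}(c_i)+i\a$ lie in pairwise distinct cosets of $\overline{v}\overline{K}$ and so are pairwise distinct. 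A valuation sends a finite sum of elements of pairwise distinct values to the minimum of those values, so the inequality above is forced to be an equality, giving $\overline{v}_E(f)=\overline{v}_{a,\a}(f)$ in this case.

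The remaining, genuinely delicate case is $E$ a pcts; here $\a=\d\in\overline{v}\overline{K}$, so distinct indices may yield $\overline{v}(c_i)+i\d$ equal and cancellation is a priori possible. I would resolve this at the residue level. Choose $c\in\overline{K}$ with $\overline{v}(c)=\d$ and set $u_\nu:=(z_\nu-a)/c$. Since $a$ is a limit, $\overline{v}(u_\nu)=0$ for all but at most one $\nu$, and since $\overline{v}(z_\nu-z_\mu)=\d$ for $\nu\neq\mu$ we get $\overline{v}(u_\nu-u_\mu)=0$; hence the residues $\overline{u_\nu}$ are pairwise distinct in the residue field of $(\overline{K},\overline{v})$, which is algebraically closed and therefore infinite. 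Put $m:=\min_i\{\overline{v}(c_i)+i\d\}$, let $I:=\{i:\overline{v}(c_i)+i\d=m\}\neq\emptyset$, and pick $d_0\in\overline{K}$ with $\overline{v}(d_0)=m$. From $f(z_\nu)=\sum_i c_ic^iu_\nu^i$ one reads off $\overline{f(z_\nu)/d_0}=P(\overline{u_\nu})$, where $P(T):=\sum_{i\in I}\overline{c_ic^i/d_0}\;T^i$ is a fixed nonzero polynomial over the residue field. Since $P$ has only finitely many roots while the $\overline{u_\nu}$ are pairwise distinct and $\l$ is infinite, $P(\overline{u_\nu})\neq0$ for all $\nu$ sufficiently large, so $\overline{v}(f(z_\nu))=m$ ultimately; Proposition \ref{Prop v_E(f) in vK iff vf(z) constant} then gives $\overline{v}_E(f)=m=\overline{v}_{a,\d}(f)$. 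This pcts cancellation analysis is the main obstacle; the value transcendental cases are forced by the coset argument, and a pcs of transcendental type admits no limit in $\overline{K}$, so that case does not arise.
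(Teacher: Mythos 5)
Your proof is correct, and it splits into the same two cases as the paper's, but the pcts case is handled by a genuinely different argument. In the case where $E$ is a pcs of algebraic type or a pds, your argument coincides with the paper's: the paper simply asserts that ``all the monomials have distinct values,'' and your justification via $n\a=\overline{v}_E\left((X-a)^n\right)$ having dominating degree $n\neq 0$ is a legitimate way to fill that in (alternatively, since $\overline{v}\overline{K}$ is divisible and torsion-free, $\a\notin\overline{v}\overline{K}$ already forces $n\a\notin\overline{v}\overline{K}$). Where you diverge is the pcts case. The paper reduces to linear polynomials: it shows $\overline{v}_E(X-b)=\overline{v}_{a,\d}(X-b)$ for every $b\in\overline{K}$, splitting into the cases $\overline{v}(a-b)\neq\d$ and $\overline{v}(a-b)=\d$ (the latter via Lemma \ref{Lemma limit} and Proposition \ref{Prop v_E(f) in vK iff vf(z) constant}), and then concludes by multiplicativity of both valuations together with the fact that every polynomial splits into linear factors over $\overline{K}$. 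You instead treat an arbitrary $f$ head-on: normalizing $u_\nu:=(z_\nu-a)/c$, you show at the residue level that $f(z_\nu)/d_0$ has residue $P(\overline{u_\nu})$ for a fixed nonzero polynomial $P$ over the residue field, whose finitely many roots cannot account for the infinitely many pairwise distinct residues $\overline{u_\nu}$; hence $\overline{v}f(z_\nu)$ is ultimately the expected minimum, and Proposition \ref{Prop v_E(f) in vK iff vf(z) constant} finishes. The paper's route is shorter and leans on factorization over $\overline{K}$; yours is a Kaplansky-style computation that is more self-contained (no case analysis on $\overline{v}(a-b)$, no appeal to Lemma \ref{Lemma limit}) and, in principle, more robust, since it only needs the residues $\overline{u_\nu}$ to be pairwise distinct rather than a full factorization of $f$. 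One small remark: the algebraic closedness (hence infinitude) of the residue field, which you invoke, is not actually needed at that point --- what you use is that the $\overline{u_\nu}$ form an infinite set of distinct elements, which already follows from $\l$ being a limit ordinal.
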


\begin{proof}
	We first assume that $E$ is a pcs or pds. The assumption that $a$ is a limit of $E$ implies that $\overline{v}_E(X-a)\notin \overline{v}\overline{K}$. Consider a polynomial $f(X)\in\overline{K}[X]$ and write $f(X) = \sum_{i=0}^{n} c_i (X-a)^i$ where $c_i\in\overline{K}$. We observe that all the monomials have distinct values. It follows that $\overline{v}_E f = \min\{ \overline{v}c_i +i\a \} $ and hence $\overline{v}_E = \overline{v}_{a,\a}$.
	
	\pars We now assume that $E$ is a pcts. Observe that the fact that $E$ is a pcts implies that $\a\in\overline{v}\overline{K}$. It follows from Proposition \ref{Prop v_E(f) in vK iff vf(z) constant} that $\overline{v}_E(X-a) = \overline{v}(z_\nu-a)$ for all $\nu$ sufficiently large. The assumption that $a$ is a limit of $E$ then implies that 
	\[ \a = \overline{v}_E(X-a) =\d. \]
	Take any $b\in\overline{K}$. Then $\overline{v}_E(X-b) = \overline{v}_E(X-a+a-b) \geq \min\{ \d, \overline{v}(a-b) \} = \overline{v}_{a,\d}(X-b)$. If $\d\neq\overline{v}(a-b)$, then it follows from the triangle inequality that $\overline{v}_E(X-b) = \overline{v}_{a,\d}(X-b)$. Now assume that $\overline{v}(a-b) =\d$. It follows from Lemma \ref{Lemma limit} that $b$ is also a limit of $E$. By definition, $\overline{v}(b-z_\nu) = \d$ for all $\nu$ sufficiently large. Consequently, it follows from Proposition \ref{Prop v_E(f) in vK iff vf(z) constant} that $\overline{v}_E(X-b) = \d$. Thus,
	\[ \overline{v}_E(X-b) = \overline{v}_{a,\d}(X-b) \text{ for all }b\in\overline{K}. \]
	Since any polynomial over $\overline{K}$ splits into linear factors over $\overline{K}$, it follows that $\overline{v}_E = \overline{v}_{a,\d} = \overline{v}_{a,\a}$. 
\end{proof}

\begin{Corollary}
	Assume that $E$ is a pms in $(K,v)$ with a limit $a\in K$. Set $\a:= v_E(X-a)$. Then $v_E = v_{a,\a}$.
\end{Corollary}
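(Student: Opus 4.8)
The plan is to deduce this directly from Proposition \ref{Prop v_E is monomial valn} by restricting the equality there from $\overline{K}(X)$ down to $K(X)$. First I would check that the hypotheses of that proposition are in force. Since $a\in K\subseteq\overline{K}$ and $\overline{v}$ extends $v$, the sequence $\{\overline{v}(a-z_\nu)\}_{\nu<\l}$ coincides with $\{v(a-z_\nu)\}_{\nu<\l}$; as $a$ is a limit of $E$ in $(K,v)$ this sequence exhibits the appropriate ultimate behaviour, so $a$ is also a limit of $E$ viewed as a pms in $(\overline{K},\overline{v})$. Thus Proposition \ref{Prop v_E is monomial valn} applies to $a$.

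Next I would reconcile the parameter $\a$. Recall from the discussion preceding Proposition \ref{Prop v_E is monomial valn} that $\overline{v}_E$ is an extension of $v_E$ to $\overline{K}(X)$, so $\overline{v}_E(X-a)=v_E(X-a)=\a$. Hence the value $\a$ defined in the Corollary agrees with the value $\overline{v}_E(X-a)$ featured in the Proposition, and applying the Proposition yields $\overline{v}_E=\overline{v}_{a,\a}$ on all of $\overline{K}(X)$.

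It then remains to restrict this identity to $K(X)$. The left-hand side restricts to $v_E$ by the previous paragraph. For the right-hand side, since $a\in K$, every $f(X)\in K[X]$ has its $(X-a)$-adic expansion $f=\sum_i c_i(X-a)^i$ with coefficients $c_i\in K$, and $\overline{v}c_i=vc_i$ because $\overline{v}$ extends $v$; therefore $\overline{v}_{a,\a}f=\min_i\{\overline{v}c_i+i\a\}=\min_i\{vc_i+i\a\}=v_{a,\a}f$ for all such $f$, and the same holds after extending canonically to $K(X)$. Thus $\overline{v}_{a,\a}$ restricts to $v_{a,\a}$ on $K(X)$, and comparing the two restrictions gives $v_E=v_{a,\a}$.

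I do not anticipate a genuine obstacle here: all the substance is carried by Proposition \ref{Prop v_E is monomial valn}, and the Corollary is simply the remark that when the limit $a$ already lies in the base field $K$, both the monomial valuation $v_{a,\a}$ and its defining formula are defined over $K$, so one may pass from $\overline{K}(X)$ back down to $K(X)$ without any change. The only point that warrants a line of care is recording that $\overline{v}_E$ and $\overline{v}_{a,\a}$ genuinely restrict to $v_E$ and $v_{a,\a}$ respectively, which is immediate from $a\in K$ and the fact that $\overline{v}$ extends $v$.
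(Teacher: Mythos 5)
Your proof is correct and matches the paper's intent exactly: the paper states this Corollary without proof, as an immediate consequence of Proposition \ref{Prop v_E is monomial valn}, and your argument simply makes explicit the routine verifications (that $a$ remains a limit of $E$ in $(\overline{K},\overline{v})$, that $\overline{v}_E(X-a)=v_E(X-a)$, and that both $\overline{v}_E$ and $\overline{v}_{a,\a}$ restrict to $v_E$ and $v_{a,\a}$ on $K(X)$ since $a\in K$). Nothing is missing.
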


\begin{Remark}\label{Remark v(X-a) = v(X-b)}
	Assume that $E$ is a pcs or pds in $(K,v)$ and $a\neq b$ are elements in $\overline{K}$ which are limits of $E$. It follows that $\a:= \overline{v}_E (X-a)\notin \overline{v}\overline{K}$ and $\overline{v}_E (X-b)\notin \overline{v}\overline{K}$. In light of Lemma \ref{Lemma v(y-K)} we conclude that $\a =  \overline{v}_E (X-a) = \max \overline{v}_E (X-\overline{K}) = \overline{v}_E (X-b)$. Consequently, $\overline{v}(a-b)>\a$.	
\end{Remark}

We will write $\a>\overline{v}\overline{K}$ if $\a>\b$ for all $\b\in\overline{v}\overline{K}$. Analogously we will write $\a<\overline{v}\overline{K}$ to mean that $\a<\b$ for all $\b\in\overline{v}\overline{K}$.

\begin{Proposition}\label{Prop unique pair of defn}
	Let $E$ be a pms in $(K,v)$ and assume that $a\in\overline{K}$ is a limit of $E$. Set $\a:= \overline{v}_E(X-a)$. Then
	\[ \a>\overline{v}\overline{K} \text{ if and only if } E \text{ is a Cauchy sequence of algebraic type}. \]
	In this case $a$ is purely inseparable over $K^h$. On the other hand,
	\[  \a<\overline{v}\overline{K} \text{ if and only if } E \text{ diverges to infinity}.  \]
\end{Proposition}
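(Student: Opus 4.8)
The plan is to split according to the three types of pms and read off each equivalence from the monomial description $\overline{v}_E=\overline{v}_{a,\a}$ furnished by Proposition \ref{Prop v_E is monomial valn}. If $E$ is a pcts then $\a=\d\in\overline{v}\overline{K}$, so neither $\a>\overline{v}\overline{K}$ nor $\a<\overline{v}\overline{K}$ holds and $E$ is neither Cauchy of algebraic type nor divergent to infinity; both equivalences are then vacuous. For the remaining cases I would record two facts to be used throughout: a limit $c\in\overline{K}$ of a pcs or pds always has $\overline{v}_E(X-c)\notin\overline{v}\overline{K}$ (as in the proof of Proposition \ref{Prop v_E is monomial valn}), whereas a non-limit $c$ has $\overline{v}_E(X-c)\in\overline{v}\overline{K}$ by Lemma \ref{Lemma limit or const seq} together with Proposition \ref{Prop v_E(f) in vK iff vf(z) constant}. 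Finally, since $\overline{v}\overline{K}$ is the divisible hull of $vK$, a subset of $vK$ is cofinal (resp.\ coinitial) in $vK$ if and only if it is cofinal (resp.\ coinitial) in $\overline{v}\overline{K}$; this elementary transfer promotes the Cauchy/divergence hypotheses to statements about all of $\overline{v}\overline{K}$.

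For the forward implications I would argue directly. If $E$ is a pcs (necessarily of algebraic type, as it has the algebraic limit $a$), then for each $\nu$ the function $\phi:=(X-a)/(z_\nu-a)$ satisfies $\overline{v}\phi(z_\mu)=\d_\mu-\d_\nu>0$ for all $\mu>\nu$, whence $\overline{v}_E(\phi)>0$ and $\a>\d_\nu$; so $\a>\d_\nu$ for every $\nu$. If moreover $E$ is Cauchy, then $\{\d_\nu\}$ is cofinal in $\overline{v}\overline{K}$, and combined with $\a>\d_\nu$ this yields $\a>\overline{v}\overline{K}$. Dually, if $E$ is a pds, writing $X-a=(X-z_\nu)-(a-z_\nu)$ and using the pds case observations (valid over $\overline{K}$) that $\overline{v}_E(X-z_\nu)=\max\overline{v}_E(X-\overline{K})<\d_\nu=\overline{v}(a-z_\nu)$, the ultrametric inequality gives $\a=\overline{v}_E(X-z_\nu)<\d_\nu$ for all $\nu$; if $E$ diverges to infinity then $\{\d_\nu\}$ is coinitial in $\overline{v}\overline{K}$ and $\a<\overline{v}\overline{K}$ follows.

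The converse implications I would handle by one dual construction. Assume $\a>\overline{v}\overline{K}$. The pcts case is excluded since it gives $\a\in\overline{v}\overline{K}$, and the pds case since it gives $\a<\d_\nu\in\overline{v}\overline{K}$; thus $E$ is a pcs, hence of algebraic type. If $E$ were not Cauchy, choose $g_0\in vK$ with $\d_\nu<g_0$ for all $\nu$ and $c\in\overline{K}$ with $\overline{v}(a-c)=g_0$ (e.g.\ $c=a-t$, $t\in K$, $vt=g_0$). Then $\overline{v}(a-c)=g_0>\d_\nu$, so $c$ is a limit by Lemma \ref{Lemma limit} and $\overline{v}_E(X-c)\notin\overline{v}\overline{K}$; but $\overline{v}_E(X-c)=\overline{v}_{a,\a}(X-c)=\min\{\a,g_0\}=g_0\in\overline{v}\overline{K}$, a contradiction. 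The case $\a<\overline{v}\overline{K}$ is the exact dual: pcts and pcs are excluded, so $E$ is a pds; if it did not diverge to infinity, pick $g_0\in vK$ with $\d_\nu>g_0$ and $c\in\overline{K}$ with $\overline{v}(a-c)=g_0$, so that $\overline{v}(a-c)=g_0<\d_\nu$ makes $c$ a non-limit with $\overline{v}_E(X-c)\in\overline{v}\overline{K}$, while $\overline{v}_E(X-c)=\min\{\a,g_0\}=\a\notin\overline{v}\overline{K}$, again a contradiction.

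It remains to show that $a$ is purely inseparable over $K^h$ when $\a>\overline{v}\overline{K}$. If not, the minimal polynomial of $a$ over $K^h$ has a root $a'\neq a$, and there is $\s\in\Aut(\overline{K}\mid K^h)$ with $\s(a)=a'$. Since $K^h$ is henselian, $v$ prolongs uniquely to $\overline{K}$ over $K^h$, so $\overline{v}\circ\s=\overline{v}$; as each $z_\nu\in K\subseteq K^h$ is fixed by $\s$, I get $\overline{v}(a'-z_\nu)=\overline{v}(\s(a-z_\nu))=\overline{v}(a-z_\nu)=\d_\nu$ for all large $\nu$, whence $\overline{v}(a-a')\geq\d_\nu$ for all large $\nu$. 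Cofinality of $\{\d_\nu\}$ in $\overline{v}\overline{K}$ then contradicts $\overline{v}(a-a')\in\overline{v}\overline{K}$, proving the claim. I expect this inseparability statement to be the main obstacle: the two equivalences reduce cleanly to the monomial formula and the cofinal/coinitial transfer, whereas the purely inseparable conclusion genuinely requires the uniqueness of the prolongation of $v$ over the henselization together with the automorphism argument; a secondary point demanding care is the correct promotion of cofinality and coinitiality from $vK$ to its divisible hull $\overline{v}\overline{K}$.
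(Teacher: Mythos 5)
Your proof is correct: every step can be justified from the results you cite, and it covers both equivalences and the inseparability claim. It is built from the same toolkit as the paper's proof --- Lemma \ref{Lemma limit}, the construction of witnesses $c=a-t$ with $\overline{v}(a-c)$ prescribed in $vK$, the transfer of cofinality/coinitiality between $vK$ and $\overline{v}\overline{K}$, and the henselian conjugation argument with $\overline{v}\circ\s=\overline{v}$ --- but the organization is noticeably different. The paper's proof pivots on the \emph{uniqueness} of the algebraic limit when $\a>\overline{v}\overline{K}$: by Remark \ref{Remark v(X-a) = v(X-b)}, a second limit $b\neq a$ would satisfy $\overline{v}(a-b)>\a$, which is impossible; both the Cauchy property (a failure of cofinality produces a second limit) and the pure inseparability of $a$ over $K^h$ (a conjugate $\s a\neq a$ would be a second limit) are then deduced by contradicting that uniqueness. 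You never invoke uniqueness of the limit. Instead you lean on the monomial description $\overline{v}_E=\overline{v}_{a,\a}$ from Proposition \ref{Prop v_E is monomial valn} to compute $\overline{v}_E(X-c)=\min\{\a,\overline{v}(a-c)\}$ explicitly, and you contradict the dichotomy ``$c$ is a limit if and only if $\overline{v}_E(X-c)\notin\overline{v}\overline{K}$''; for inseparability you contradict cofinality directly rather than limit-uniqueness. Likewise, where the paper deduces ``$E$ is a pcs'' (resp.\ ``$E$ is a pds'') from properties of the set of limits, you exclude the other types via the inequalities $\a>\d_\nu$ (pcs case) and $\a<\d_\nu$ (pds case) established in your forward directions, which makes the duality between the two equivalences fully symmetric and mechanical. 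The only cosmetic imprecision is in your rational-function computation $\overline{v}\phi(z_\mu)=\d_\mu-\d_\nu$, which holds for $\nu,\mu$ sufficiently large rather than for all $\mu>\nu$ (since $\overline{v}(a-z_\mu)=\d_\mu$ is only guaranteed ultimately); this is harmless because only ultimate behaviour enters the definition of $\overline{v}_E$, and monotonicity of $\{\d_\nu\}_{\nu<\l}$ then upgrades $\a>\d_\nu$ to all indices.
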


\begin{proof}
	We first assume that $\a>\overline{v}\overline{K}$. The fact that $\a\notin\overline{v}\overline{K}$ implies that $E$ is a pcs or pds. Suppose that there exists some $b\in\overline{K}\setminus\{a\}$ such that $b$ is also a limit of $E$. It follows from Remark \ref{Remark v(X-a) = v(X-b)} that $\overline{v}(a-b)>\a$. However, this contradicts the assumption that $\a>\overline{v}\overline{K}$. So $a$ is the unique algebraic limit of $E$. As a consequence, $E$ is a pcs of algebraic type. Now suppose that $\{\d_\nu\}_{\nu<\l}$ is not cofinal in $\overline{v}\overline{K}$. Then there exists some $c\in\overline{K}$ such that $\overline{v}(a-c)>\d_\nu$ for all $\nu<\l$. But this implies that $c$ is a limit of $E$ which again yields a contradiction. Thus $\{\d_\nu\}_{\nu<\l}$ is cofinal in $\overline{v}\overline{K}$. The fact that $vK$ is cofinal in $\overline{v}\overline{K}$ then implies that $\{\d_\nu\}_{\nu<\l}$ is cofinal in $vK$. Hence $E$ is a Cauchy sequence in $(K,v)$ of algebraic type.
	
	\pars We now assume that $E$ is a Cauchy sequence of algebraic type, that is, $\{\d_\nu\}_{\nu<\l}$ is cofinal in $\overline{v}\overline{K}$. Recall that $X$ is a limit of $E$. The fact that $a$ is also a limit of $E$ then implies that $\a:= \overline{v}_E(X-a) > \d_\nu$ for all $\nu<\l$. It follows that $\a>\overline{v}\overline{K}$. 
	
	\pars We now assume that $E$ is a Cauchy sequence of algebraic type with the unique algebraic limit $a\in\overline{K}$. Suppose that $a$ is not purely inseparable over $K^h$. Then there exists some conjugate $\s a\neq a$ of $a$ over $K^h$ where $\s\in\Gal(\overline{K}|K^h)$. The fact that $(K^h,\overline{v})$ is henselian implies that $\overline{v}\circ\s = \overline{v}$ on $\overline{K}$. It follows that
	\[ \overline{v}(\s a-z_\nu) = (\overline{v}\circ\s) (a-z_\nu) = \overline{v}(a-z_\nu) = \d_\nu \text{ for all }\nu<\l. \]
	Consequently, $\s a$ is also a limit of $E$, which contradicts the uniqueness of $a$. Hence $a$ is purely inseparable over $K^h$.
	
	\parm We now assume that $\a<\overline{v}\overline{K}$. Hence either $E$ is a pcs or $E$ is a pds. Take any $c\in\overline{K}$. The triangle inequality implies that $\overline{v}_E(X-c) = \a\notin\overline{v}\overline{K}$ and hence $c$ is a limit of $E$. In particular, every element of $E$ is a limit of $E$. Hence $E$ is a pds. Further, we take any $\b\in\overline{v}\overline{K}$ and $b\in\overline{K}$ such that $\overline{v}(a-b) = \b$. The observation that $b$ is a limit of $E$ implies that $\b\geq \d_\nu$ for all $\nu$ sufficiently large. Since this condition holds for all $\b\in\overline{v}\overline{K}$ we observe that $\{\d_\nu\}_{\nu<\l}$ is coinitial in $\overline{v}\overline{K}$, that is, $E$ diverges to infinity.
	
	\pars Conversely, we assume that $E$ is a pds such that $\{\d_\nu\}_{\nu<\l}$ is coinitial in $\overline{v}\overline{K}$. Suppose that $\a\geq \d_\nu$ for some and hence all sufficiently large $\nu$. It then follows from Lemma \ref{Lemma limit} that $X$ is a limit of $E$ which yields a contradiction. Hence $\a<\d_\nu$ for all $\nu<\l$, that is, $\a<\overline{v}\overline{K}$.
\end{proof}

%=============================================================

\section{Pure extensions}\label{Section pure extn}
An extension $(K(X)|K,v)$ is said to be a \textbf{pure extension} if it satisfies one of the following conditions:
\sn (PE1) $v(X-a)$ is not a torsion element modulo $vK$ for some $a\in K$,
\n (PE2) $v b(X-a)=0$ and $b(X-a)v$ is transcendental over $Kv$ for some $a,b\in K$,
\n (PE3) $X$ is the limit of some pcs $\{z_\nu\}_{\nu<\l}$ of transcendental type in $(K,v)$.

Pure extensions are very well understood in terms of minimal pairs, key polynomials and implicit constant fields. 

\begin{Proposition}
	Let $(\overline{K(X)}|K,v)$ be an extension of valued fields such that $(K(X)|K,v)$ is pure. Then the following statements hold true:
	\sn (1) $IC(K(X)|K,v) = K^h$, 
	\n (2) if the extension $(K(X)|K,v)$ satisfies (PE1) or (PE2), then $(a,v(X-a))$ is a minimal pair of definition for $v$ over $K$ and $\{X-a\}$ forms a complete sequence of key polynomials for $v$ over $K$, 
	\n (3) if the condition (PE3) is satisfied, then $(K(X)|K,v)$ is immediate and $\{X-z_\nu\}_{\nu<\l}$ forms a complete sequence of key polynomials for $v$ over $K$.  
\end{Proposition}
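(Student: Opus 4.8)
The plan is to treat the valuation-transcendental cases (PE1) and (PE2) together, the immediate case (PE3) separately, and to derive (2) and (3) first and then read off (1).

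First I would pin down the valuation in the cases (PE1) and (PE2). Set $\a := v(X-a)$. Under (PE1) the element $\a$ is non-torsion modulo $vK$, hence $\a\notin\QQ\tensor_{\ZZ} vK=\overline{v}\overline{K}$, and Lemma \ref{Lemma v(y-K)} (applied to $y=X$ over $\overline{K}$) gives $\a=\overline{v}(X-a)=\max\overline{v}(X-\overline{K})$. Under (PE2), if some $c\in\overline{K}$ satisfied $\overline{v}(X-c)>\a$, then $\overline{v}(b(X-c))>0$, so taking residues in $b(a-c)=b(X-a)-b(X-c)$ would force $b(X-a)v$ to agree with $b(a-c)v\in\overline{K}v$, which is algebraic over $Kv$, contradicting the transcendence of $b(X-a)v$; hence again $\a=\max\overline{v}(X-\overline{K})$. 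In either case the pair-of-definition criterion recalled in Section \ref{Section min pair key pols} shows that $(a,\a)$ is a pair of definition, so $\overline{v}=\overline{v}_{a,\a}$ and $v=v_{a,\a}$ on $K(X)$. This already yields (MP1); since $a\in K$ we have $[K(a):K]=1$, so (MP2) holds vacuously and $(a,\a)$ is a minimal pair of definition.

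For the key polynomials I would argue as follows. In the cases (PE1)/(PE2) the monic linear polynomial $X-a$ is a key polynomial by definition, and expanding any $f\in K[X]$ in powers of $X-a$ with constant coefficients shows $v_{X-a}=v_{a,\a}=v$; thus (CSKP3) holds, while (CSKP1) and (CSKP2) are trivial for a singleton. In the case (PE3) the extension is immediate by [\ref{Kaplansky}, Theorem 2], as noted in the Remark on pseudo convergent sequences. Each $X-z_\nu$ is a monic linear, hence key, polynomial with $\d(X-z_\nu)=\overline{v}_E(X-z_\nu)=\d_\nu$; since $\{\d_\nu\}_{\nu<\l}$ is strictly increasing these $\d$-values are distinct (CSKP1) and are well ordered by $\nu$ (CSKP2). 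For (CSKP3) I would fix $f\in K[X]$ and use that, by transcendental type and Proposition \ref{Prop v_E(f) in vK iff vf(z) constant}, $\{vf(z_\nu)\}_{\nu<\l}$ is ultimately constant equal to $v_Ef$; expanding $f=\sum_i f_i(X-z_\nu)^i$ one checks that for $\nu$ large the minimum of $\{vf_i+i\d_\nu\}$ is attained at the constant term $f_0=f(z_\nu)$, so $v_{X-z_\nu}f=vf(z_\nu)=v_Ef$. This is the standard MacLane--Kaplansky computation (cf. [\ref{Nova Spiva key pol pseudo convergent}]); the point is that transcendental type forces the values of the Taylor coefficients $f_i=f^{(i)}(z_\nu)/i!$ to be bounded by $\d_\nu$ for large $\nu$, which is where the absence of an algebraic limit enters.

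Finally, for (1) I always have $K^h\subseteq IC(K(X)|K,v)$ and must prove the reverse inclusion. Since $K^h|K$ is immediate, one has $K(X)^h=K^h(X)^h$ and each of (PE1)--(PE3) descends to the base $K^h$, so I may assume $K=K^h$ is henselian and must show that $K$ is relatively algebraically closed in $K(X)^h$. Given $\theta\in K(X)^h$ algebraic over $K$, immediacy of $K(X)^h|K(X)$ gives $v\theta\in vK(X)$ and $\theta v\in K(X)v$; in case (PE1), where $vK(X)=vK\oplus\ZZ\a$ and $K(X)v=Kv$, intersecting with $\overline{v}\overline{K}=\QQ\tensor_{\ZZ} vK$ forces $v\theta\in vK$ and $\theta v\in Kv$, and in case (PE3) immediacy gives the same directly, so $K(\theta)|K$ is immediate. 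It then remains to exclude a nontrivial immediate algebraic extension inside $K(X)^h$: in the cases (PE1)/(PE2) I would invoke the description of the implicit constant field attached to a minimal pair of definition from [\ref{Dutta min fields implicit const fields}], which together with $a\in K$ yields $IC(K(X)|K,v)\subseteq K(a)^h=K^h$; in the case (PE3) a nontrivial such $\theta$ would realize an algebraic limit of $E$ (via an approximating pseudo convergent sequence), contradicting transcendental type. The main obstacle is precisely this last step: ruling out a nontrivial immediate, possibly defect, algebraic extension inside $K(X)^h$. The value-group and residue-field bookkeeping only shows that any candidate $\theta$ is immediate over $K^h$, and henselianity alone does not forbid immediate separable-algebraic extensions; it is here that purity is essential, entering through the minimal pair lying over $K$ itself in the valuation-transcendental cases and through the absence of algebraic limits in the pseudo convergent case.
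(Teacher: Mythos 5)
Your treatment of parts (2) and (3) is essentially sound, though it takes a partly different route from the paper: for (PE2) you replace the paper's citations ([\ref{Kuh value groups residue fields rational fn fields}, Lemma 2.10] and [\ref{Novacoski key poly and min pairs}, Theorem 1.1]) by a direct residue argument showing $\a=\max\overline{v}(X-\overline{K})$, which is legitimate; for (PE3) you sketch the MacLane--Kaplansky computation that the paper simply attributes to the proof of [\ref{Kaplansky}, Theorem 2]. (Your phrase that transcendental type forces the Taylor coefficients to be ``bounded by $\d_\nu$'' is garbled: what one actually shows is $vf(z_\nu)\le vf_i+i\d_\nu$ ultimately, using that each $v f_i = v(\partial_i f(z_\nu))$ is ultimately constant while the monomial values $vf_i+i\d_\nu$ are ultimately pairwise distinct; but the intended argument is the standard correct one.)

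The genuine gap is in part (1), case (PE3). The paper disposes of (1) entirely by citing [\ref{Kuh value groups residue fields rational fn fields}, Lemma 3.7]; you chose to prove it, and your route does work for (PE1)/(PE2) (via [\ref{Dutta min fields implicit const fields}, Theorem 1.1] together with $a\in K$), but the step you yourself flag as ``the main obstacle'' is never closed. If $\theta\in IC(K(X)|K,v)\setminus K^h$, then $K^h(\theta)|K^h$ is indeed a proper immediate separable extension, and [\ref{Kaplansky}, Theorem 1] produces a pcs in $K^h$ with limit $\theta$ --- but that is some \emph{other} pseudo convergent sequence, not $E$. Nothing forces $\theta$ to be a limit of $E$ itself, so no contradiction with $E$ being of transcendental type arises, and ``via an approximating pseudo convergent sequence'' does not bridge this. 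The argument that actually works (and is the content of Kuhlmann's lemma) rests on the \emph{uniqueness} assertion in Kaplansky's Theorem 2 applied over $\overline{K}$: since $E$ of transcendental type admits no limit in $\overline{K}$, it remains of transcendental type over $\overline{K}$, so $v$ is the unique extension of $\overline{v}$ to $\overline{K}(X)$ for which $X$ is a limit of $E$. Hence for every $\s\in\Gal(\overline{K}|K)$ with $\overline{v}\circ\s=\overline{v}$, the extension of $\s$ to $\overline{K}(X)$ fixing $X$ satisfies $v\circ\s=v$ on $\overline{K}(X)$ (both valuations extend $\overline{v}$ and admit $X$ as a limit of $E$); composing with a suitable element of $\Gal(\overline{K(X)}|\overline{K}(X))$, by conjugacy of extensions, one obtains an automorphism in the decomposition group of $v$ over $K(X)$ restricting to $\s$ on $\overline{K}$. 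Such automorphisms fix $K(X)^h$, hence $IC(K(X)|K,v)$, pointwise; since $IC(K(X)|K,v)$ is separable over $K$ and fixed by the whole decomposition group of $\overline{v}$ over $K$, it lies in $K^h$. Without this argument (or an explicit appeal to [\ref{Kuh value groups residue fields rational fn fields}, Lemma 3.7], as the paper makes), your proof of (1) is incomplete in the (PE3) case.
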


\begin{proof}
	The first assertion is proved in [\ref{Kuh value groups residue fields rational fn fields}, Lemma 3.7].
	
	\pars We now assume that $v(X-a)$ is not a torsion element modulo $vK$, that is, $v(X-a)\notin v\overline{K}$. Take a polynomial $f(X)\in K[X]$ and write $f(X) = \sum_{i=0}^{n} c_i (X-a)^i$ where $c_i \in \overline{K}$. Observe that each of the monomials in this expression have distinct values. It follows that $vf = \min\{ v c_i +iv (X-a) \} = v_{a, v(X-a)}f$, that is, $(a,v(X-a))$ is a pair of definition for $v$ over $K$. The fact that $a\in K$ implies that $(a,v(X-a))$ is a minimal pair of definition for $v$ over $K$. Further, the observation $vf = v_{a, v(X-a)}f$ for all $f(X)\in K[X]$ implies that $v|_{K(X)} = v_Q$, where $Q(X):= X-a$. By definition, $\{X-a\}$ forms a complete sequence of key polynomials for $v$ over $K$.
	
	\pars We now assume that $v b(X-a) =0$ and $b(X-a)v$ is transcendental over $Kv$ for some $a,b\in K$. It follows from [\ref{Kuh value groups residue fields rational fn fields}, Lemma 2.10] that $v|_{K(X)} = v_Q$ where $Q(X):= X-a$. Hence $\{X-a\}$ forms a complete sequence of key polynomials for $v$ over $K$. Observe that $\d(Q) = v(X-a)$, since $Q$ is a linear polynomial. It now follows from [\ref{Novacoski key poly and min pairs}, Theorem 1.1] that $(a,v(X-a))$ forms a minimal pair of definition for $v$ over $K$.
	
	\pars Finally, we assume that $(K(X)|K,v)$ satisfies the condition (PE3). In light of [\ref{Kaplansky}, Theorem 2], we obtain that the extension $(K(X)|K,v)$ is immediate. Set $Q_\nu:= X-z_\nu \in K[X]$. By definition, $\{\d(Q_\nu)\}_{\nu<\l}$ is a strictly increasing sequence. Further, for any $f(X)\in K[X]$, the sequence $\{vf(z_\nu)\}_{\nu<\l}$ is ultimately constant. It has been observed in the proof of [\ref{Kaplansky}, Theorem 2] that $vf = v_{Q_\nu}f$ for all $\nu$ sufficeintly large. As a consequence, we obtain that $\{X-z_\nu\}_{\nu<\l}$ forms a complete sequence of key polynomials for $v$ over $K$.
\end{proof}

\begin{Theorem}\label{Thm K(X)|K is pure}
	Assume that either $E$ is a pcs of transcendental type or $E$ has a limit $a\in K$. Fix an extension $\overline{v}$ of $v$ to $\overline{K}$ and consider the common extension $\overline{v}_E$ of $\overline{v}$ and $v_E$ to $\overline{K}(X)$. Fix an extension of $\overline{v}_E$ to $\overline{K(X)}$. Then $(K(X)|K,v_E)$ is a pure extension and $IC(K(X)|K,v_E) = K^h$.
\end{Theorem}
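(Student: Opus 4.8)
The plan is to prove Theorem \ref{Thm K(X)|K is pure} by reducing it to the already-established Proposition on pure extensions, so the main work is to verify that $(K(X)|K,v_E)$ satisfies one of the conditions (PE1), (PE2), (PE3) in each of the stated cases. I would split the argument according to the hypotheses exactly as the statement does.

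First, suppose $E$ is a pcs of transcendental type. By the Remark on the pcs case, $X$ is a limit of $E$ with respect to $v_E$; indeed $\{v_E(X-z_\nu)\}_{\nu<\l}$ is strictly increasing, so $E$ is a pcs in $(K(X),v_E)$ admitting $X$ as a limit. Since $E$ is of transcendental type, condition (PE3) holds verbatim, and the previous Proposition delivers both that $(K(X)|K,v_E)$ is immediate (hence pure) and that $IC(K(X)|K,v_E)=K^h$. This case is essentially immediate once one invokes (PE3).

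Next, suppose $E$ has a limit $a\in K$. Set $\a:=v_E(X-a)$, which by the Corollary to Proposition \ref{Prop v_E is monomial valn} gives $v_E=v_{a,\a}$. I would now separate according to whether $\a$ is torsion modulo $vK$. If $\a\notin\QQ\tensor_{\ZZ}vK$, then $v(X-a)=\a$ is non-torsion modulo $vK$ and (PE1) holds directly with the element $a\in K$. If instead $\a\in\QQ\tensor_{\ZZ}vK$, then $E$ must be a pcts (by Proposition \ref{Prop v_E is monomial valn}, a pcs or pds limit forces $\a\notin\overline{v}\overline{K}$, whereas here $\a\in vK$ since the displayed analysis after Proposition \ref{Prop v_E(f) in vK iff vf(z) constant} shows $v_E(X-a)=\d\in vK$). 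In the pcts subcase $\a=\d\in vK$, so I would choose $b\in K$ with $vb=-\d$ and check that $v_E\bigl(b(X-a)\bigr)=0$ while the residue $\bigl(b(X-a)\bigr)v_E$ is transcendental over $Kv$; this is exactly the content of (PE2). The transcendence of the residue follows because no $c\in\overline{K}$ can force $\overline{v}_E(X-a+a-c)>\d$ unless $\overline{v}(a-c)>\d$, and the values of $X-c$ for $c\in K$ never strictly exceed $\d$, so $(X-a)v_E$ cannot be algebraic over $Kv$ via any monomial relation; alternatively this transcendence is immediate from the structure of the monomial valuation $v_{a,\d}$ with $\d\in vK$ and $\overline{v}_E(X-a)=\max\overline{v}_E(X-\overline{K})$.

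In all cases, having verified one of (PE1)--(PE3), the Proposition on pure extensions yields $IC(K(X)|K,v_E)=K^h$ and that $(K(X)|K,v_E)$ is pure, completing the proof. The main obstacle I anticipate is the pcts subcase: one must argue carefully that the residue $\bigl(b(X-a)\bigr)v_E$ is genuinely transcendental over $Kv$ rather than algebraic, and to do this cleanly I would lean on the monomial description $v_E=v_{a,\d}$ together with the fact that $\d=\max\overline{v}_E(X-\overline{K})$ forces the relevant residue to be a free generator of the residue field extension. Everything else is a routine application of the triangle inequality and the previously proved structural results.
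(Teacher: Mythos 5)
Your proposal is correct and follows essentially the same route as the paper: verify (PE1)/(PE2)/(PE3) case by case and then invoke the Proposition on pure extensions, your split according to whether $\a:=v_E(X-a)$ is torsion modulo $vK$ coinciding exactly with the paper's split by the type of $E$ (pcs or pds with limit in $K$ versus pcts). The only point of divergence is the pcts subcase: the paper observes that $(a,\a)$ with $a\in K$ and $\a\in vK$ is a minimal pair of definition, so the extension is residue transcendental, and then cites [APZ, Theorem 2.1] for the transcendence of $b(X-a)v_E$ over $Kv$; your appeal to the monomial structure $v_E=v_{a,\d}$ is an adequate substitute (for $g=\sum_i d_i\bigl(b(X-a)\bigr)^i$ with $d_i\in K$ one has $v_E g=\min_i vd_i$, so a nontrivial residue relation over $Kv$ would force $v_E g>0=\min_i vd_i$, a contradiction), but your first argument---that values $\overline{v}_E(X-c)$ for $c\in\overline{K}$ never strictly exceed $\d$---does not by itself establish transcendence of the residue and should be discarded in favour of that computation.
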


\begin{proof}
	If $E$ is a pcs of transcendental type, then the extension $(K(X)|K,v_E)$ is pure by definition. Otherwise, we assume that there exists some $a\in K$ such that $a$ is a limit of $E$. We have observed in Proposition \ref{Prop v_E is monomial valn} that $(a,\a)$ is a pair of definition for $v_E$ over $K$, where $\a:= v_E (X-a)$. The fact that $a\in K$ implies that $(a,\a)$ is a minimal pair of definition. If $E$ is a pcs or pds, then the fact that $a$ is a limit of $E$ implies that $\a\notin\overline{v}\overline{K}$. Consequently, $(K(X)|K,v_E)$ is pure by definition. If $E$ is a pcts, then $\a\in vK$. Take some $b\in K$ such that $vb(X-a) = 0$. The fact that $(a,\a)$ is a minimal pair of definition for $v_E$ over $K$ where $\a\in vK$ implies that $(K(X)|K,v_E)$ is residue transcendental. It now follows from [\ref{APZ characterization of residual trans extns}, Theorem 2.1] that $b(X-a)v$ is transcendental over $Kv$. We have thus proved the theorem.
\end{proof}

Recall that every element of $E$ is also a limit of $E$ whenever $E$ is a pds or pcts. Thus the remaining case to consider is when $E$ is a pcs of algebraic type without a limit in $(K,v)$.

%==============================================================================

\section{$E$ is a pcs of algebraic type}\label{Section pcs of alg type}

Throughout this section, we assume that $\overline{v}$ is a fixed extension of $v$ to $\overline{K}$. Take the common extension $\overline{v}_E$ of $v_E$ and $\overline{v}$ to $\overline{K}(X)$. We fix an extension of $\overline{v}_E$ to $\overline{K(X)}$. Our next result modifies [\ref{Dutta min fields implicit const fields}, Theorem 9.2] which illustrates that we can obtain stronger results in our setting.  

\begin{Theorem}\label{Thm pcs of alg type}
	Assume that $E$ is a pcs of algebraic type without a limit in $K$. Take an associated minimal polynomial $Q(X)\in K[X]$. Then the following statements hold true:
	\sn (1) there exists some root $a$ of $Q$ such that $(a,\overline{v}_E (X-a))$ forms a minimal pair of definition for $v_E$ over $K$,
	\n (2) $\{Q_\nu\}_{\nu<\l} \union \{Q\}$ forms a complete sequence of key polynomials for $v_E$ over $K$, where $Q_\nu:= X-z_\nu$ for all $\nu<\l$,
	\n (3) $IC(K(X)|K,v_E) = K^h$. 
\end{Theorem}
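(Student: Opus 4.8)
The inclusion $K^h\subseteq IC(K(X)|K,v_E)$ holds for every extension (as recalled in the introduction), so the plan is to establish the reverse inclusion $IC(K(X)|K,v_E)\subseteq K^h$ by a Galois-theoretic argument resting on the decomposition-field description of the henselization. Recall that, with respect to the fixed extension $\overline{v}_E$ to $\overline{K(X)}$, the henselization $(K(X))^h$ is the fixed field of the decomposition group $G_d:=\{\tau\in\Gal(K(X)^{\sep}|K(X)) : \overline{v}_E\circ\tau = \overline{v}_E\}$, and that $IC(K(X)|K,v_E)$ consists precisely of those elements of $(K(X))^h$ that are separable-algebraic over $K$. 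By part $(1)$ together with Proposition \ref{Prop v_E is monomial valn}, I would fix a limit $a\in\overline{K}$ of $E$ (a root of $Q$) with $\overline{v}_E = \overline{v}_{a,\alpha}$, where $\alpha := \overline{v}_E(X-a)\notin\overline{v}\overline{K}$ because the extension is value transcendental.

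The heart of the proof is the claim that every $\sigma\in\Gal(\overline{K}|K^h)$ extends to an automorphism $\hat\sigma$ of $\overline{K(X)}$ over $K(X)$ that fixes $X$ and preserves $\overline{v}_E$. Granting this, the conclusion is immediate: given $\theta\in IC(K(X)|K,v_E)$, we have $\theta\in(K(X))^h$ and $\theta\in K^{\sep}$; for any $\sigma\in\Gal(K^{\sep}|K^h)$ the corresponding $\hat\sigma$ lies in $G_d$ and therefore fixes $(K(X))^h$ pointwise, so $\sigma\theta = \hat\sigma\theta = \theta$. Thus $\theta$ is fixed by all of $\Gal(K^{\sep}|K^h)$, whence $\theta\in K^h$ and $IC(K(X)|K,v_E)\subseteq K^h$.

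To prove the claim I would extend $\sigma$ to $\hat\sigma$ by declaring $\hat\sigma(X)=X$. Since $\sigma$ fixes each $z_\nu\in K$ and $(K^h,\overline{v})$ is henselian (so $\overline{v}\circ\sigma=\overline{v}$), one computes $\overline{v}(\sigma a - z_\nu)=\overline{v}(a-z_\nu)=\d_\nu$ for all $\nu$ sufficiently large, so $\sigma a$ is again a limit of $E$. If $\sigma a = a$ there is nothing to check; otherwise $a$ and $\sigma a$ are distinct limits of the pcs $E$, so Remark \ref{Remark v(X-a) = v(X-b)} yields $\overline{v}(a-\sigma a) > \alpha$. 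A direct computation, using $\overline{v}\circ\sigma=\overline{v}$ and $\hat\sigma(X)=X$, identifies the push-forward of $\overline{v}_E=\overline{v}_{a,\alpha}$ under $\hat\sigma$ with the monomial valuation $\overline{v}_{\sigma a,\alpha}$; then the relation between pairs of definition recalled in Section \ref{Section min pair key pols} turns the inequality $\overline{v}(a-\sigma a)\geq\alpha$ into the equality $\overline{v}_{\sigma a,\alpha}=\overline{v}_{a,\alpha}$, i.e.\ $\hat\sigma$ preserves $\overline{v}_E$.

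The main obstacle is exactly this value comparison, and it is what makes the argument go through uniformly. When $E$ is Cauchy of algebraic type, $a$ is purely inseparable over $K^h$ by Proposition \ref{Prop unique pair of defn}, so there are no nontrivial separable conjugates and the claim is vacuous. The genuinely subtle case is the non-Cauchy one, where $a$ may have several distinct separable conjugates over $K^h$; here $\alpha$ is value transcendental yet still strictly smaller than $\overline{v}(a-\sigma a)\in\overline{v}\overline{K}$, and it is only this fact — that distinct limits of $E$ lie strictly closer to one another than $X$ does to any of them — that keeps every such $\hat\sigma$ inside the decomposition group and thereby prevents any new separable-algebraic element from entering $(K(X))^h$.
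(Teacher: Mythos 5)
Your proposal addresses only part (3) of the theorem: the minimal-pair assertion (1) and the complete sequence of key polynomials in (2) are never proved. This is not circular for your purposes — all your argument actually needs is that some root $a$ of $Q$ is a limit of $E$ and hence gives a pair of definition, which follows from Proposition \ref{Proposition vQ ultimately constant no root-limits} together with Proposition \ref{Prop v_E is monomial valn} — but (1) and (2) still have to be supplied separately, as the paper does. For part (3) itself your route is genuinely different from the paper's. The paper reduces to $k:=K^h$, invokes [\ref{Dutta min fields implicit const fields}, Theorem 1.1] to trap $k\subseteq IC(k(X)|k,\overline{v}_E)\subseteq k(b)$ for a minimal pair $(b,\a)$, and in the crucial case ($b$ separable over $k$, $E$ not Cauchy) compares $\overline{v}_E k(X)=\overline{v}k(b)\dirsum n\ZZ\a$ with $\overline{v}_E k(b,X)=\overline{v}k(b)\dirsum\ZZ\a$ to force $[k(b,X)^h:k(X)^h]=[k(b):k]$ and hence $k(b)\sect k(X)^h=k$. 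You instead run a decomposition-group argument from the same two elementary inputs the paper also uses (conjugates of limits over the henselian field $K^h$ are again limits; Remark \ref{Remark v(X-a) = v(X-b)} combined with the pair-of-definition Lemma of Section \ref{Section min pair key pols}). This is more self-contained — it avoids [\ref{Dutta min fields implicit const fields}, Theorem 1.1, Propositions 3.6 and 7.1, Remark 3.3] entirely — and it treats the Cauchy and non-Cauchy cases uniformly, whereas the paper needs the purely-inseparable detour in the Cauchy case.

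There is, however, one genuine gap in the key claim. Membership of $\hat\sigma|_{K(X)^{\sep}}$ in the decomposition group $G_d$ requires $\overline{v}_E\circ\hat\sigma=\overline{v}_E$ \emph{on $K(X)^{\sep}$}, but your verification — identifying the push-forward of $\overline{v}_{a,\a}$ with $\overline{v}_{\sigma a,\a}$ and applying the Lemma of Section \ref{Section min pair key pols} — only makes sense, and only establishes the equality, on $\overline{K}(X)$, since that is where the monomial valuations live. As $K(X)^{\sep}\not\subseteq\overline{K}(X)$ (it contains, e.g., roots of $T^2-X$ when $\ch K\neq 2$), declaring $\hat\sigma(X)=X$ does not yet produce a valuation-preserving automorphism of $\overline{K(X)}$. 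The repair is standard: let $\hat\sigma_0$ be the coefficient-wise action of $\sigma$ on $\overline{K}(X)$ and extend it arbitrarily to an automorphism $\hat\sigma_1$ of $\overline{K(X)}$. Your computation shows that $\overline{v}_E\circ\hat\sigma_1$ and $\overline{v}_E$ restrict to the same valuation on $\overline{K}(X)$; since $\overline{K(X)}|\overline{K}(X)$ is normal, the conjugation theorem gives some $\rho\in\Aut(\overline{K(X)}|\overline{K}(X))$ with $\overline{v}_E\circ\hat\sigma_1=\overline{v}_E\circ\rho$, and then $\hat\sigma:=\hat\sigma_1\circ\rho^{-1}$ fixes $K(X)$ pointwise, restricts to $\sigma$ on $\overline{K}$, and preserves $\overline{v}_E$ on all of $\overline{K(X)}$. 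With this insertion (and with (1) and (2) proved as in the paper) your argument for (3) goes through.
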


\begin{proof}
	The assumption that $ Q(X)$ is an associated minimal polynomial for $E$ implies that $\{vQ(z_\nu)\}_{\nu<\l}$ is ultimately strictly increasing. Hence there exists some root $a$ of $Q$ such that $a$ is a limit of $E$ [Proposition \ref{Proposition vQ ultimately constant no root-limits}]. It follows from Propostion $\ref{Prop v_E is monomial valn}$ that $(a,\a)$ is a pair of defninition for $v_E$ over $K$, where $\a:= \overline{v}_E (X-a)$. Take another pair of definition $(a^\prime,\a)$ for $v_E$ over $K$ and consider the minimal polynomial $f(X)$ of $a^\prime$ over $K$. It follows from [\ref{Dutta min fields implicit const fields}, Theorem 9.2] that $a^\prime$ is also a limit of $E$. Consequently, $\{vf(z_\nu)\}_{\nu<\l}$ is ultimately strictly increasing. The minimality of $Q$ then implies that $\deg f \geq \deg Q$, that is, $[K(a^\prime):K]\geq [K(a):K]$. Hence $(a,\a)$ is a minimal pair of definition for $v_E$ over $K$.
	
	\pars The observation $\a = \max \overline{v}_E (X-\overline{K})$ implies that $\d(Q) = \a$. It now follows from [\ref{Novacoski key poly and min pairs}, Theorem 1.1] that $v_E = v_Q$ and $Q(X)$ is a key polynomial for $v_E$ over $K$. Recall that $X$ is a limit of $E$. Thus $\d(Q_\nu) = v_E (X-z_\nu) = \d_\nu$ for all $\nu<\l$. Hence $\{\d(Q_\nu)\}_{\nu<\l}$ is a strictly increasing sequence and $\d(Q)> \d(Q_\nu)$ for all $\nu<\l$. Take any polynomial $g(X)\in K[X]$ with $\deg g < \deg Q$. The minimality of $Q$ implies that $\{vg(z_\nu)\}_{\nu<\l}$ is ultimately constant. It follows from Proposition \ref{Prop v_E(f) in vK iff vf(z) constant} that $v_E g = vg(z_\nu)$ for all $\nu$ sufficiently large. As a consequence, $vg = v_{Q_\nu}g$ for all $\nu$ sufficiently large. We have thus obtained that $\{Q_\nu\}_{\nu<\l}\union\{Q\}$ forms a complete sequence of key polynomials for $v_E$ over $K$.
	
	\pars We set $k:= K^h$. Observe that $k(X)^h = K(X)^h$. As a consequence, 
	\[ IC(K(X)|K,v_E) = \overline{K}\sect K(X)^h = \overline{k}\sect k(X)^h = IC(k(X)|k,\overline{v}_E). \]
	Now, $E$ is a pcs of algebraic type in $(k,\overline{v})$ and $(a,\a)$ is a pair of definition for $\overline{v}_E$ over $k$. Take a minimal pair of definition $(b,\a)$ for $\overline{v}_E$ over $k$ such that $b$ is a limit of $E$. It follows from [\ref{Dutta min fields implicit const fields}, Theorem 1.1] that 
	\[ k\subseteq IC(k(X)|k,\overline{v}_E)\subseteq k(b). \]
	If $b\in k$, then we have $IC(k(X)|k,\overline{v}_E) = k$. If $\a>\overline{v}\overline{K}$, then we obtain from Proposition \ref{Prop unique pair of defn} that $b$ is purely inseparable over $k$. Consequently, it follows from [\ref{Dutta min fields implicit const fields}, Propostion 7.1] that $IC(k(X)|k,\overline{v}_E) = k$. We now consider the complementary case, that is, $\overline{v}k$ is cofinal in $\overline{v}_E k(X)$ and $b\notin k$. In light of [\ref{Dutta min fields implicit const fields}, Proposition 3.6] we can assume that $b$ is separable over $k$. Take any conjugate $\s b\neq b$ of $b$ over $k$ where $\s\in\Gal(\overline{k}|k)$. The fact that $(k,\overline{v})$ is henselian implies that $\overline{v}\circ \s = \overline{v}$ on $\overline{k}$. Then,
    \[ \overline{v}(\s b -z_\nu) = (\overline{v}\circ\s)(b-z_\nu) = \overline{v}(b-z_\nu) = \d_\nu \text{ for all }\nu<\l.   \]
    It follows that any conjugate of $b$ is also a limit of $E$. Thus $\overline{v}_E (X-\s b) = \a$ for all conjuates $\s b$ of $b$ over $k$. Take the minimal polynomial $q(X)\in k[X]$ of $b$ over $k$. Then
    \[ \overline{v}_E (q) = n\a \text{ where } n:= \deg q = [k(b):k]. \]      
    It follows from [\ref{Dutta min fields implicit const fields}, Remark 3.3] that $\overline{v}_E k(X) = \overline{v}k(b)\dirsum n\ZZ\a$. Further, the extension $(k(b,X)|k(b),\overline{v}_E)$ being pure implies that $\overline{v}_E k(b,X) = \overline{v}k(b)\dirsum \ZZ\a$. As a consequence, we obtain that
    \[ (\overline{v}_E k(b,X):\overline{v}_E k(X)) = n = [k(b):k]. \]
    Now, observe that $k(b,X)^h = k(X)^h (b)$. It follows that 
	\[ (\overline{v}_E k(b,X):\overline{v}_E k(X)) = [k(b):k]\geq [k(b,X)^h: k(X)^h] \geq (\overline{v}_E k(b,X):\overline{v}_E k(X)).  \]
	As a consequence,
	\[ [k(b,X)^h: k(X)^h] = [k(b):k]. \]
	It follows that $k(b)$ and $k(X)^h$ are linearly disjoint over $k$. Hence 
	\[ k(b)\sect k(X)^h = k. \]
	The fact that $IC(k(X)|k,\overline{v}_E) \subseteq k(b)$ now implies that $IC(k(X)|k,\overline{v}_E) \subseteq k(b)\sect k(X)^h = k$. Further, $IC(k(X)|k,\overline{v}_E)$ is a separable-algebraic extension of $k$ by definition. It then follows that $IC(k(X)|k,\overline{v}_E) =k$. We have thus obtained that
	\[ IC(K(X)|K,v_E) = K^h. \]
\end{proof}

%Let $E:= \{z_\nu\}_{\nu<\l}$ be a pcs of algebraic type in $(K,v)$. Take the least initial segment of $\overline{v}\overline{K}$ containing the sequence $\{\d_\nu\}_{\nu<\l}$ and write it as $\L_i(E,\overline{v})$. By definition, $\{\d_\nu\}_{\nu<\l}$ is cofinal in $\L_i(E,\overline{v})$. Suppose that $\L_i(E,\overline{v})$ has a maximal element $\g$. The fact that $\{\d_\nu\}_{\nu<\l}$ is strictly increasing implies that $\g > \d_\nu$ for all $\nu<\l$. As a consequence, $\d_\nu \in \g^-:= \{ \b\in\overline{v}\overline{K}\mid \b<\g\}$ for all $\nu<\l$, which contradicts the minimality of $\L_i(E,\overline{v})$. It follows that $\L_i(E,\overline{v})$ does not contain a maximal element. Thus either $\L_i(E,\overline{v}) = \overline{v}\overline{K}$ or $\L_i(E,\overline{v}) = \g^-$ for some $\g\in\overline{v}\overline{K}$. Observe that $\L_i(E,\overline{v}) = \overline{v}\overline{K}$ if and only if $E$ is a Cauchy sequence. We define,
%\[ \sup(E,\overline{v}) := 
%\begin{cases}
%	\g &\text{ if }\L_i(E,\overline{v}) = \g^-,\\
%	\infty & \text{ if }\L_i(E,\overline{v}) = \overline{v}\overline{K},
%\end{cases} \] 
%and we will call it \textbf{the least upper bound of $E$ in $\overline{v}\overline{K}$}. Observe that $\sup(E,\overline{v})$ depends solely on $E$ and $\overline{v}$.  

\begin{Theorem}\label{Thm pcs alg equivalences}
	Let $(K(X)|K,w)$ be a value transcendental extension and $E:= \{z_\nu\}_{\nu<\l}$ a pcs of algebraic type in $(K,v)$. Take associated minimal polynomials $Q(X)\in K[X]$ and $Q^h(X)\in K^h[X]$ of $E$ over $K$ and $K^h$ respectively. Assume that $X$ is a limit of $E$. Then
	\[  \deg \dom_E(Q^h) = \deg Q^h. \]
	Consider the following statements:
	\sn (i) $w = v_E$.
	\n (ii) There exists some common extension $\overline{w}$ of $\overline{v}$ and $w$ to $\overline{K}(X)$ satisfying the property that $a$ is a limit of $E$ if and only if $(a,\overline{w}(X-a))$ is a pair of definition for $w$ over $K$.
	\n (iii) Take a pair of definition $(a,\overline{w}(X-a))$ for $w$ over $K$, where $\overline{w}$ is some common extension of $w$ and $\overline{v}$ to $\overline{K}(X)$. Then for all $\b\in\overline{v}\overline{K}$, $\b> \overline{w}(X-a)$ if and only if $\b>\d_\nu$ for all $\nu<\l$.
	\n (iv) $(a, \overline{w}(X-a))$ is a minimal pair of definition for $w$ over $K$ for some root $a$ of $Q$, where $\overline{w}$ is some common extension of $w$ and $\overline{v}$ to $\overline{K}(X)$.
	\n (v) $(a, \overline{w}(X-a))$ is a pair of definition for $w$ over $K$ for some root $a$ of $Q$, where $\overline{w}$ is some common extension of $w$ and $\overline{v}$ to $\overline{K}(X)$.
	\n (vi) $w(Q)\notin\overline{v}\overline{K}$.
	\n (vii) $IC(K(X)|K,w) = K^h$, where we fix an extension of $w$ to $\overline{K(X)}$.\\
	Then we have the chains of relations:
	\begin{align*}
		&(i)\Longleftrightarrow(ii)\Longleftrightarrow (iii)\Longrightarrow(iv)\Longleftrightarrow(v)\Longleftrightarrow (vi).\\
		&(i)\Longleftrightarrow(ii)\Longleftrightarrow (iii)\Longrightarrow(vii).\\
	\end{align*}
\end{Theorem}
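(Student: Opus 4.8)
To prove the preliminary identity $\deg\dom_E(Q^h)=\deg Q^h$, the plan is to show that \emph{every} root of $Q^h$ is a limit of $E$. Since $Q^h$ is an associated minimal polynomial over $K^h$, the sequence $\{vQ^h(z_\nu)\}_{\nu<\l}$ is ultimately strictly increasing, so $\deg\dom_E(Q^h)\geq 1$ and at least one root $a$ is a limit [Proposition \ref{Proposition vQ ultimately constant no root-limits}]. As $Q^h$ is irreducible over $K^h$, its distinct roots form a single orbit under $\Gal(\overline{K}|K^h)$, and since $(K^h,\overline{v})$ is henselian we have $\overline{v}\circ\s=\overline{v}$; hence $\overline{v}(\s a-z_\nu)=\overline{v}(a-z_\nu)=\d_\nu$ and every conjugate $\s a$ is again a limit. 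Counting with multiplicities (which the inseparable case requires) then yields $\deg\dom_E(Q^h)=\deg Q^h$.

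For the equivalences I would fix a common extension $\overline{w}$ of $w$ and $\overline{v}$ to $\overline{K}(X)$; since $w$ is value transcendental, $\overline{w}$ is a monomial valuation and $\a_w:=\max\overline{w}(X-\overline{K})\notin\overline{v}\overline{K}$ is non-torsion modulo $\overline{v}\overline{K}$. The workhorse of the argument is the following observation: because $X$ is a limit of $E$, Lemma \ref{Lemma limit} applied in $\overline{K}(X)$ with $y=X$ shows that $a\in\overline{K}$ is a limit of $E$ if and only if $\overline{w}(X-a)\geq\d_\nu$ for all $\nu$ sufficiently large; since $\overline{w}(X-z_\nu)=\d_\nu$ we get $\a_w\geq\d_\nu$ for every $\nu$, so every element of the maximal ball $\{c\in\overline{K}\mid \overline{w}(X-c)=\a_w\}$ is a limit of $E$.

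Writing $Q=\prod_i(X-c_i)$ over $\overline{K}$, each value $\overline{w}(X-c_i)$ is either $\a_w$ or an element of $\overline{v}\overline{K}$, so by non-torsionality $w(Q)\notin\overline{v}\overline{K}$ exactly when some root satisfies $\overline{w}(X-c_i)=\a_w$, which is precisely the statement that some root of $Q$ is a pair of definition; this gives $(v)\Leftrightarrow(vi)$. For $(v)\Rightarrow(iv)$ I take a root $a$ of $Q$ lying in the maximal ball and check (MP2): any $b\in\overline{K}$ with $\overline{v}(a-b)\geq\a_w$ again satisfies $\overline{w}(X-b)=\a_w$, hence is a limit of $E$ by the workhorse observation, so its minimal polynomial over $K$ has a root-limit and, by minimality of $Q$, has degree $\geq\deg Q=[K(a):K]$. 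Thus $(a,\a_w)$ is a minimal pair of definition and $(iv)\Leftrightarrow(v)\Leftrightarrow(vi)$.

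For the top of the diagram I would first record that $v_E$ satisfies $(iii)$: by Remark \ref{Remark v(X-a) = v(X-b)} distinct limits $a\neq c$ obey $\overline{v}(a-c)>\a_{v_E}$, so no $\b\in\overline{v}\overline{K}$ can lie strictly between all the $\d_\nu$ and $\a_{v_E}:=\max\overline{v}_E(X-\overline{K})$; this identifies the upper cut of $\a_{v_E}$ as $\{\b\in\overline{v}\overline{K}\mid\b>\d_\nu\text{ for all }\nu\}$, yielding $(i)\Rightarrow(iii)$. Conversely, from $(iii)$ one obtains $(v)$ — a root of $Q$ is a limit [Proposition \ref{Proposition vQ ultimately constant no root-limits}], and the cut hypothesis forbids its value from lying in $\overline{v}\overline{K}$, forcing it into the maximal ball — so $w$ and $v_E$ share the center $a$; since, by the computation above, $\a_w$ and $\a_{v_E}$ realize the same cut over the divisible group $\overline{v}\overline{K}$, and every comparison of monomials $\overline{v}c_i+i\a$ reduces by divisibility to a comparison of $\a$ with an element of $\overline{v}\overline{K}$, the monomial valuations $\overline{w}_{a,\a_w}$ and $\overline{v}_{a,\a_{v_E}}=\overline{v}_E$ coincide, giving $w=v_E$ and hence $(iii)\Rightarrow(i)$. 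The remaining implications $(i)\Rightarrow(ii)$ (take $\overline{w}=\overline{v}_E$, using Proposition \ref{Prop v_E is monomial valn} in one direction and the workhorse observation in the other) and $(ii)\Rightarrow(iii)$ (an element of the forbidden gap would be a limit that is not a pair of definition) close the cycle, while $(iii)\Rightarrow(vii)$ is immediate from $w=v_E$ together with Theorem \ref{Thm pcs of alg type}. The main obstacle is $(iii)\Rightarrow(i)$: one must argue that matching the cut of $\a_w$ over the divisible group $\overline{v}\overline{K}$ pins the monomial valuation down uniquely, and here the non-torsion (value transcendental) hypothesis and the divisibility of $\overline{v}\overline{K}$ are exactly what make the reduction of comparisons go through.
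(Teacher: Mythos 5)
Your proposal is correct, and it draws on the same toolbox as the paper (Lemma \ref{Lemma limit} applied with $y=X$, Lemma \ref{Lemma v(y-K)}, the henselian conjugation argument for $\deg\dom_E(Q^h)=\deg Q^h$, and the minimality of $Q$ to verify (MP2)), but it closes the main cycle by a genuinely different mechanism. The paper proves $(ii)\Longrightarrow(i)$ by working directly against the definition of $V_E$: from the cut property it shows that every $\phi(X)\in K(X)$ satisfies $w\phi=d\a+\b$ and $v\phi(z_\nu)=d\d_\nu+\b$ for the same $d\in\ZZ$ and $\b\in\overline{v}\overline{K}$, then matches signs to conclude $W=V_E$, never invoking the monomial representation of $v_E$. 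You instead prove $(iii)\Longrightarrow(i)$ by exhibiting a common center $a$ (a root-limit of $Q$, forced into the maximal ball by the cut hypothesis) and arguing that the two monomial valuations $\overline{w}_{a,\a_w}$ and $\overline{v}_{a,\a_{v_E}}=\overline{v}_E$, whose parameters realize the same cut over the divisible group $\overline{v}\overline{K}$, must have the same valuation ring; this requires Proposition \ref{Prop v_E is monomial valn} together with the fact that $\a_{v_E}$ realizes that cut, which you derive directly from Remark \ref{Remark v(X-a) = v(X-b)} rather than via the paper's route $(i)\Rightarrow(ii)\Rightarrow(iii)$. Your cut-uniqueness argument is conceptually cleaner and isolates exactly where divisibility and value transcendence enter, while the paper's computation is more self-contained, staying at the level of the defining sequence $\{z_\nu\}_{\nu<\l}$. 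Two smaller deviations also work in your favor: you obtain the preliminary identity without the paper's case split and without citing [\ref{Dutta min fields implicit const fields}, Theorem 9.2], using Proposition \ref{Proposition vQ ultimately constant no root-limits} to produce the initial root-limit; and you get $(v)\Longleftrightarrow(vi)$ in one stroke from the decomposition of $w(Q)$ into ball values, which is the same computation the paper distributes over its $(v)\Rightarrow(vi)$ and $(vi)\Rightarrow(iv)$ steps. Both treatments share the paper's mild looseness about which common extension $\overline{w}$ is fixed in statements $(ii)$--$(v)$, and about the case where $E$ has a limit in $K$ when citing Theorem \ref{Thm pcs of alg type} for $(vii)$, so nothing is lost there.
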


\begin{proof}
	If $E$ admits a limit $b^\prime\in K^h$, then $Q^h(X):= X-b^\prime \in K^h[X]$ is a linear polynomial and hence we have $\deg \dom_E(Q^h) = 1 = \deg Q^h$. Otherwise, it follows from [\ref{Dutta min fields implicit const fields}, Theorem 9.2] that there exists some root $b$ of $Q^h$ which is a limit of $E$. By definition, $\overline{v}(b-z_\nu) = \d_\nu$ for all $\nu<\l$. The fact that $K^h$ is henselian implies that $\overline{v}\circ\s = \overline{v}$ for all $\s\in\Gal(\overline{K}|K^h)$. It follows that
	\[ \overline{v}(\s b-z_\nu) = (\overline{v}\circ\s) (b-z_\nu) = \overline{v}(b-z_\nu) = \d_\nu. \]
	Consequently, every conjugate of $b$ over $K^h$ is also a limit of $E$, that is, $\deg\dom_E(Q^h) = \deg Q^h$.
	
	\pars We assume that $w=v_E$ and take the common extension $\overline{v}_E$ of $\overline{v}$ and $v_E$ to $\overline{K}(X)$. We have observed in Section \ref{Section valns induced by pms} that $a$ is a limit of $E$ if and only if $\overline{v}_E(X-a)\notin\overline{v}\overline{K}$. It now follows from Lemma \ref{Lemma v(y-K)} that $a$ is a limit of $E$ if and only if $\overline{v}_E (X-a) = \max \overline{v}_E(X-\overline{K})$, that is, $(a,\overline{v}_E (X-a))$ is a pair of definition for $v_E$ over $K$. We thus have the assertion $(i)\Longrightarrow (ii)$.
	
	\pars We now assume that there exists some common extension $\overline{w}$ of $\overline{v}$ and $w$ to $\overline{K}(X)$ such that $a$ is a limit of $E$ if and only if $(a,\overline{w}(X-a))$ is a pair of definition for $w$ over $K$. Take a limit $a$ of $E$ and set $\a:= \overline{w}(X-a)$. The fact that $w$ is a value transcendental extension implies that $\a\notin\overline{v}\overline{K}$. Further, the fact that $X$ and $a$ are limits of $E$ implies that $\a\geq\d_\nu$ for all $\nu$ sufficiently large [Lemma \ref{Lemma limit}]. Since $E$ is a pcs, we have that $\{\d_\nu\}_{\nu<\l}$ is strictly increasing. Hence
	\[ \a>\d_\nu \text{ for all }\nu<\l. \]
	Take any $\b\in\overline{v}\overline{K}$. If $\d_\nu>\b$ for some and hence all $\nu$ sufficiently large, then it follows that $\a>\b$. Conversely, suppose that $\a>\b$ but $\d_\nu<\b$ for all $\nu<\l$. Take some $b\in\overline{K}$ such that $\overline{v}(a-b) = \b$. In light of Lemma \ref{Lemma limit}, the observation that $a$ is a limit of $E$ implies that $b$ is also a limit of $E$. However, the relation $\overline{v}(a-b)<\a$ implies that $\overline{w}(X-b) = \b< \a$ and hence $(b,\overline{w}(X-b))$ is not a pair of definition for $w$ over $K$, thereby contradicting our staring assumption. We have thus shown that for any $\b\in\overline{v}\overline{K}$,
	\[ \a>\b \text{ if and only if } \d_\nu>\b \text{ for all }\nu \text{ sufficiently large}. \]  
	As a consequence, we obtain that for fixed $d\in\ZZ$ and $\b\in\overline{v}\overline{K}$, we have that
	\[ d\a+\b\geq 0 \text{ if and only if } d\d_\nu+\b\geq 0 \text{ for all }\nu \text{ sufficiently large}. \]
	Take $c\in\overline{K}$ such that $c$ is not a limit of $E$. It follows from Lemma \ref{Lemma limit} that $\overline{w}(X-c)<\d_\nu$ for all $\nu$ sufficiently large. We then obtain from the triangle inequality that 
	\[ \overline{w}(X-c) = \overline{v}(z_\nu-c) \text{ for all }\nu \text{ sufficiently large}.\]
	We can then expand this observation to arbitrary polynomials over $K$. Take $f(X)\in K[X]$ and write $f(X) = \prod_{i=1}^{n} (X-a_i)$, where $a_1,\dotsc,a_j$ are all the root-limits of $(f,E)$. From the starting assumptions we observe that $\overline{w}(X-a_i) = \a$ for all $i=1,\dotsc,j$. Hence
	\[ wf = j\a+(\b_{j+1}+\dotsc \b_n), \]
	where $\b_r:= \overline{v}(z_\nu - a_r)$ for all $\nu$ sufficiently large, for all $r = j+1,\dotsc,n$. Observe that $j$ and $\b_{j+1}+\dotsc \b_n$ are uniquely determined by $f(X)$. The same arguments imply that for any rational function $\phi(X)\in K(X)$, we have uniquely determined $d\in \ZZ$ and $\b\in\overline{v}\overline{K}$ such that
	\[ w\phi = d\a +\b. \]
	Further, observe that
	\[ v\phi(z_\nu) = d\d_\nu +\b \text{ for all }\nu \text{ sufficiently large}. \]
	From our observations earlier it now follows that
	\[ w\phi\geq 0  \text{ if and only if } v\phi(z_\nu)\geq 0 \text{ for all }\nu \text{ sufficiently large}.  \]
	We have thus shown that $W=V_E$, where $W$ is the valuation domain corresponding to the valuation $w$. Hence we have proved the assertion $(ii)\Longrightarrow (i)$.
	
	\pars We now take a pair of definition $(a,\overline{w}(X-a))$ for $w$ over $K$ where $\overline{w}$ is a common extension of $w$ and $\overline{v}$ to $\overline{K}(X)$, and set $\a := \overline{w}(X-a)$. The fact that $X$ is a limit of $E$ implies that $w(X-z_\nu) = \d_\nu$ for all $\nu<\l$. It now follows from Lemma \ref{Lemma v(y-K)} that $\a>\d_\nu$ for all $\nu<\l$, and as a consequence $a$ is also a limit of $E$. Take any $\b\in\overline{v}\overline{K}$ and $b\in\overline{K}$ such that $\overline{v}(a-b)=\b$. In light of Lemma \ref{Lemma limit} we observe that $\b>\d_\nu$ for all $\nu<\l$ if and only if $b$ is also a limit of $E$. On the other hand, $\b>\a$ if and only if $(b,\overline{w}(X-b))$ is a pair of definition for $w$ over $K$. We thus have the equivalence $(iii)\Longleftrightarrow(ii)$.

	\pars If there exists some $a\in K$ which is a limit of $E$, then $Q(X) = X-a$ and $(a,v_E(X-a))$ is a minimal pair of definition for $v_E$ over $K$. If $E$ does not have a limit in $K$ then the assertion $(i)\Longrightarrow (iv)$ is proved in Theorem \ref{Thm pcs of alg type}, where we consider the common extension $\overline{v}_E$ of $\overline{v}$ and $v_E$ to $\overline{K}(X)$. The assertion $(iv)\Longrightarrow (v)$ follows by definition. The assertion $(v)\Longrightarrow(vi)$ is straightforward. Conversely, we assume that $w(Q)\notin \overline{v}\overline{K}$. Thus $\a:= \overline{w}(X-a)\notin \overline{v}\overline{K}$ for some root $a$ of $Q$ where $\overline{w}$ is a common extension of $w$ and $\overline{v}$ to $\overline{K}(X)$. It follows from Lemma \ref{Lemma v(y-K)} that $\a= \max \overline{w}(X-\overline{K})$ and hence $(a,\a)$ is a pair of definition for $w$ over $K$. Observe that $\d_\nu = w(X-z_\nu)\in w(X-K)$. As a consequence, $\a>\d_\nu \text{ for all }\nu<\l$. Recall that $X$ is a limit of $E$. It now follows from Lemma \ref{Lemma limit} that $a$ is a limit of $E$. We take another pair of definition $(b,\a)$ for $w$ over $K$. Then $\overline{v}(a-b) > \a>\d_\nu$ for all $\nu<\l$ and hence $b$ is also a limit of $E$. Consequently, the sequence $\{ vf(z_\nu) \}_{\nu<\l}$ is ultimately strictly increasing, where $f(X)\in K[X]$ is the minimal polynomial of $b$ over $K$ [Proposition \ref{Proposition vQ ultimately constant no root-limits}]. The minimality of $Q(X)$ then implies that $\deg f \geq \deg Q$, that is, $[K(b):K]\geq [K(a):K]$. By definition, $(a,\a)$ is a minimal pair of definition for $w$ over $K$, thereby proving the assertion $(vi)\Longrightarrow (iv)$. We have thus obtained the equivalence $(iv)\Longleftrightarrow (v)\Longleftrightarrow (vi)$.
	
	\pars The assertion $(i)\Longrightarrow (vii)$ follows from Theorem \ref{Thm pcs of alg type}.
\end{proof}

The following result is now immediate:

\begin{Corollary}
	Let $(K(X)|K,w)$ be a value transcendental extension and $E$ a Cauchy sequence of algebraic type in $(K,v)$. Assume that $X$ is a limit of $E$. Then $w=v_E$.
\end{Corollary}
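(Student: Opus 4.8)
The plan is to deduce the corollary directly from Theorem \ref{Thm pcs alg equivalences}. A Cauchy sequence is by definition a pcs, and it is assumed here to be of algebraic type, so all the hypotheses of that theorem are in force: $w$ is value transcendental, $E$ is a pcs of algebraic type, and $X$ is a limit of $E$. Hence the chain $(i)\Longleftrightarrow(ii)\Longleftrightarrow(iii)$ applies, and it suffices to verify statement $(iii)$; once $(iii)$ holds we obtain $(i)$, namely $w=v_E$.

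To check $(iii)$, I would fix a pair of definition $(a,\a)$ for $w$ over $K$ with $\a:=\overline{w}(X-a)$, where $\overline{w}$ is a common extension of $w$ and $\overline{v}$ to $\overline{K}(X)$. Because $w$ is value transcendental, $\a\notin\overline{v}\overline{K}$. Statement $(iii)$ asserts that for every $\b\in\overline{v}\overline{K}$ one has $\b>\a$ if and only if $\b>\d_\nu$ for all $\nu<\l$; the strategy is to show that under the Cauchy hypothesis \emph{both} sides of this equivalence are vacuously false, so that the equivalence holds trivially for every $\b$.

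For the right-hand side: $E$ being Cauchy means $\{\d_\nu\}_{\nu<\l}$ is cofinal in $vK$, and since $vK$ is cofinal in $\overline{v}\overline{K}$, the sequence $\{\d_\nu\}_{\nu<\l}$ is cofinal in $\overline{v}\overline{K}$ as well. Consequently no $\b\in\overline{v}\overline{K}$ satisfies $\b>\d_\nu$ for all $\nu$. For the left-hand side I would establish $\a>\overline{v}\overline{K}$: since $X$ is a limit of $E$ we have $\overline{w}(X-z_\nu)=\d_\nu$ for all sufficiently large $\nu$, and since $(a,\a)$ is a pair of definition, $\a=\max\overline{w}(X-\overline{K})\geq\d_\nu$; as $\a\notin\overline{v}\overline{K}$ while $\d_\nu\in vK$, the inequality is strict, giving $\a>\d_\nu$ for all $\nu$ (the $\d_\nu$ being strictly increasing). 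Combining this with the cofinality of $\{\d_\nu\}$ in $\overline{v}\overline{K}$ yields $\a>\b$ for every $\b\in\overline{v}\overline{K}$, i.e.\ $\a>\overline{v}\overline{K}$, so no $\b$ satisfies $\b>\a$. Both conditions in $(iii)$ therefore fail for every $\b$, whence $(iii)$ holds and $w=v_E$.

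The steps are routine transfers of cofinality between $vK$ and $\overline{v}\overline{K}$, and the only point requiring any care is the passage to $\a>\overline{v}\overline{K}$, which mirrors the forward implication of Proposition \ref{Prop unique pair of defn} but must be carried out for the abstract value transcendental valuation $w$ rather than for $v_E$ itself. I expect no genuine obstacle here: the substantive work has already been absorbed into Theorem \ref{Thm pcs alg equivalences}, and the corollary amounts to verifying that the Cauchy-of-algebraic-type hypothesis forces condition $(iii)$ to hold vacuously.
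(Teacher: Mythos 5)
Your proposal is correct and follows exactly the route the paper intends: the paper states the corollary as ``immediate'' from Theorem \ref{Thm pcs alg equivalences}, and the intended deduction is precisely that the Cauchy hypothesis makes both sides of condition $(iii)$ fail for every $\b\in\overline{v}\overline{K}$ (cofinality of $\{\d_\nu\}$ kills the right side, and the already-established inequality $\a>\d_\nu$ for all $\nu$ combined with cofinality kills the left side), so $(iii)$ holds vacuously and $(iii)\Longrightarrow(i)$ gives $w=v_E$. Your verification of $\a>\overline{v}\overline{K}$ is sound and merely spells out what the paper leaves implicit.
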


The assertions $(iv)\Longrightarrow(i)$ and $(vii)\Longrightarrow (i)$ in the statement of Theorem \ref{Thm pcs alg equivalences} are not always true. An example illustrating this fact is furnished below.

\begin{Example}\label{Example 3.6 not 1}
	Take a prime number $p>0$. Consider the valued field $(\FF_p(t),v_t)$ where $v:= v_t$ is the $t$-adic valuation. Fix an extension of $v$ to $\overline{\FF_p(t)}$ which we again denote by $v$. We define a sequence of elements $\{a_i\}_{i\in\NN}\subset \overline{\FF_p(t)}$ recursively as follows:
	\[ a_0 := -\frac{1}{t}, \,\,  a_{i+1}^p - a_{i+1} + a_i = 0. \]
	Set $K:= \FF_p(t)^h(a_i\mid i\in\NN)$ and take $\eta\in\overline{K}$ such that $\eta^p = \frac{1}{t}$. Then $(K,v)$ is a henselian valued field with $vK = \frac{1}{p^\infty}\ZZ := \{ \frac{a}{p^n} \mid a\in\ZZ, n\in\NN \}$. It has been observed in [\ref{Kuh defect}, Example 3.14] that $(K(\eta)|K,v)$ is an immediate extension and 
	\[  v(\eta-K) = \{ \g\in vK \mid \g<0 \}. \]
	It follows from [\ref{Kaplansky}, Theorem 1] that there exists a pcs $E:= \{z_\nu\}_{\nu<\l}$ in $(K,v)$ without a limit in $K$ such that $\eta$ is a limit of $E$. Hence $\d_\nu = v(\eta-z_\nu) < 0$ for all $\nu<\l$, where $\d_\nu:= v(z_\nu - z_{\nu+1})$. The fact that $\eta$ is algebraic over $K$ implies that $E$ is a pcs of algebraic type. Further, it follows from Proposition \ref{Proposition vQ ultimately constant no root-limits} that $\{ vQ(z_\nu)_{\nu<\l} \}$ is ultimately strictly increasing, where $Q(X):= X^p - \frac{1}{t} \in K[X] $ is the minimal polynomial of $\eta$ over $K$. The facts that $\deg Q$ is a prime number and $E$ does not have a limit in $K$ imply that
	\[ Q(X) \text{ is an associated minimal polynomial of $E$ over }K. \]
%	Take $\eta^\prime\in\overline{K}$ such that $(\eta^\prime)^p = \frac{1}{t}+1$. From the Frobenius endomorphism we obtain that $(\eta^\prime-\eta)^p = 1$, and consequently, $v(\eta^\prime-\eta) = 0$. It follows from Lemma \ref{Lemma limit} that 
%	\[ \eta^\prime\text{ is a limit of }E. \]
	Take an irrational number $\a>0$ and consider the value extension $\overline{w}:= v_{\eta,\a}$ of $v$ from $\overline{K}$ to $\overline{K}(X)$. Set $w:= \overline{w}|_{K(X)}$. In light of Lemma \ref{Lemma limit} we observe that
	\[ X \text{ is a limit of }E.\]
	Now, $w(Q) = w(X-\eta)^p = p\a\notin v\overline{K}$. From Theorem \ref{Thm pcs alg equivalences} we now obtain that
	\[ (\eta,\a) \text{ is a minimal pair of definition for $w$ over }K. \] 
	As a consequence, we obtain from [\ref{Dutta min fields implicit const fields}, Proposition 7.1] that
	\[ IC(K(X)|K,w) = K. \]
	We obtain from [\ref{Dutta min fields implicit const fields}, Remark 3.3] that $wK(X) = vK(\eta)\dirsum\ZZ w(Q)\subset \RR$. Consequently, $\rk wK(X) = 1$.
	Observe that $\{\d_\nu\}_{\nu<\l}$ is cofinal in $v(\eta-K) = \{ \g\in vK \mid \g<0  \}$. Further, given any negative real number $r\in\RR$, there exists $n\in\NN$ such that $r < -\frac{1}{p^n} < 0$. It follows that $v(\eta-K)$ and hence consequently $\{\d_\nu\}_{\nu<\l}$ is cofinal in $0^- := \{ r\in\RR\mid r<0\}$. Thus $\sup\{\d_{\nu}\}_{\nu<\l} = 0$. It now follows from Theorem \ref{Thm rank v_E} that $\rk v_E K(X) = \rk vK +1 = 2$. As a consequence, 
	\[ w\neq v_E. \]
%	Take the minimal polynomial $f(X):= X^p - \frac{1}{t}-1$ of $\eta^\prime$ over $K$. Further, we observe that the triangle inequality implies that $\overline{w}(X-\eta^\prime) = 0 <\a$. It follows that $w(f) = 0$. Observe that the fact that $\eta^\prime$ is a limit of $E$ implies that $v_E(f)\notin v\overline{K}$. We have thus observed that
%	\[  w\neq v_E.\] 
\end{Example}

We now provide a couple of examples to illustrate that the conditions $(iv)$ and $(vii)$ in the setting of Theorem \ref{Thm pcs alg equivalences} may not be interrelated. 

\begin{Example}
	Take the valued field $(K,v)$, $\eta\in\overline{K}$ and a pcs $E$ in $(K,v)$ with limit $\eta$ as in Example \ref{Example 3.6 not 1}. Recall that $v(\eta-K) = \{ \g\in vK\mid \g<0 \}$. Further, $Q(X):= X^p - \frac{1}{t}$ is an associated minimal polynomial of $E$ over $K$. Take $a\in\overline{K}$ such that
	\[ a^{p^2} = \frac{1}{t^p} +t. \]
	Then $K(a)|K$ is a purely inseparable extension and $[K(a):K]\leq p^2$. Observe that $a^p = \frac{1}{t} + \frac{1}{\eta}$. The fact that $\eta\notin K$ implies that $a^p\notin K$. It follows that
	\[ [K(a):K] = p^2. \] 
	Applying Frobenius endomorphism, we obtain that $(a-\eta)^{p^2} = t$. As a consequence, 
	\[ v(a-\eta) = \frac{1}{p^2}. \]
	It now follows from Lemma \ref{Lemma limit} that $a$ is a limit of $E$. Take the value transcendental extension $\overline{w}:= v_{a,\a}$ of $v$ from $\overline{K}$ to $\overline{K}(X)$, where $\a>\frac{1}{p^2}$ is an irrational number. Set $w:= \overline{w}|_{K(X)}$. Then $(a,\a)$ is a pair of definition for $w$ over $K$. Applying Lemma \ref{Lemma limit} again, we obtain that $X$ is also a limit of $E$. Further, it follows from [\ref{Dutta min fields implicit const fields}, Lemma 5.1] that $IC(K(X)|K,w)\subseteq K(a)$. The observation that $IC(K(X)|K,w)$ is a separable-algebraic extension of $K$ while $K(a)|K$ is purely inseparable implies that
	\[ IC(K(X)|K,w) = K. \]
	From the triangle inequality we obtain that $\overline{w}(X-\eta) = v(a-\eta) = \frac{1}{p^2} < \a$. Consequently, $(\eta,\overline{w}(X-\eta))$ is not a pair of definition for $w$ over $K$. 
\end{Example}

\begin{Example}
	Let $p>0$ be a prime number. Consider the valued field $(\FF_p(t),v_t)$ where $v:= v_t$ is the $t$-adic valuation. Fix an extension of $v$ to $\overline{\FF_p(t)}$ which we again denote by $v$. Set $(K,v)$ to be the perfect hull of the henselization of $\FF_p(t)$. Take $a\in\overline{K}$ such that $a^p - a =\frac{1}{t}$. It has been observed in [\ref{Kuh defect}, Example 3.12] that $(K(a)|K,v)$ is an immediate extension with
	\[ v(a-K) = \{ \g\in vK \mid \g<0 \}. \]
	In light of [\ref{Kaplansky}, Thoerem 1] we can take a pcs $E$ in $(K,v)$ without any limits in $K$ such that $a$ is a limit of $E$. The fact that $a\in\overline{K}$ implies that $E$ is a pcs of algebraic type [\ref{Kaplansky}, Theorem 2]. Take the minimal polynomial $Q(X):= X^p - X - \frac{1}{t} \in K[X]$ of $a$ over $K$. It follows from Proposition \ref{Proposition vQ ultimately constant no root-limits} that $\{ vQ(z_\nu) \}_{\nu<\l}$ is ultimately strictly increasing. Since $\deg Q$ is a prime number, we conclude that
	\[ Q(X) \text{ is an associated minimal polynomial of $E$ over }K.  \]
	Take an irrational number $\a > 0$ and consider the value transcendental extension $\overline{w}:= v_{a,\a}$ of $v$ from $\overline{K}$ to $\overline{K}(X)$. Set $w:= \overline{w}|_{K(X)}$. Applying Lemma \ref{Lemma limit}, we obtain that $X$ is also a limit of $E$. Further, $(a,\a)$ is a pair of definition for $w$ over $K$. It then follows from Theorem \ref{Thm pcs alg equivalences} that $(a,\a)$ is a minimal pair of definition for $w$ over $K$. Observe that the set $\{ a+i \mid i\in \FF_p \}$ is the set of all the conjugates of $a$ over $K$. Consequently, $\kras(a,K) = 0< \a$, where $\kras(a,K)$ is the Krasner constant of $a$ over $K$. It now follows from [\ref{Dutta min fields implicit const fields}, Theorem 1.3] that
	\[ IC(K(X)|K,w) = K(a). \]
\end{Example}

%========================================================================

\section{$E$ is a pds}\label{Section E is a pds}

Throughout this section we will assume that $E:= \{ z_\nu \}_{\nu<\l}$ is a pds and we fix an extension $\overline{v}$ of $v$ to $\overline{K}$. Recall that $\{\d_\nu\}_{\nu<\l}$ is a strictly decreasing sequence. 

%Take the smallest final segment of $\overline{v}\overline{K}$ containing the sequence $\{\d_\nu\}_{\nu<\l}$ and denote it by $\L_f (E,\overline{v})$. Suppose that $\L_f (E,\overline{v})$ contains a minimal element $\g$. The fact that $\{\d_\nu\}_{\nu<\l}$ is strictly decreasing implies that $\g\neq \d_\nu$ for all $\nu<\l$. Hence $\{\d_\nu\}_{\nu<\l} \subseteq \g^+ := \{ \b\in\overline{v}\overline{K} \mid \b>\g \} \subsetneq \L_f (E,\overline{v})$. However, this contradicts the minimality of $\L_f (E,\overline{v})$. It follows that $\L_f (E,\overline{v})$ does not have a minimal element. Thus either $\L_f (E,\overline{v}) = \overline{v}\overline{K}$ or $\L_f (E,\overline{v}) = \g^+$ for some $\g\in\overline{v}\overline{K}$. Observe that $\L_f (E,\overline{v}) = \overline{v}\overline{K}$ of and only if $E$ is a sequence diverging to infinity. We define,
%\[ \inf(E,\overline{v}):= \begin{cases}
%	\g &\text{ if } \L_f (E,\overline{v}) = \g^+,\\
%	-\infty &\text{ if } \L_f (E,\overline{v}) = \overline{v}\overline{K},
%\end{cases} \]
%and we will call it the \textbf{greatest lower bound of $E$ in $\overline{v}\overline{K}$}. Observe that $\inf (E,\overline{v})$ depends only on $E$ and $\overline{v}$.

\begin{Theorem}\label{Thm E pds}
	Let $(K(X)|K,w)$ be a value transcendental extension and $E$ a pds in $(K,v)$. Assume that $X$ is not a limit of $E$. Then the following are equivalent:
	\sn (i) $w = v_E$.
	\n (ii) There exists some common extension $\overline{w}$ of $w$ and $\overline{v}$ to $\overline{K}(X)$ satisfying the property that $(a,\overline{w}(X-a))$ is a pair of definition for $w$ over $K$ if and only if $a$ is a limit of $E$.
	\n (iii) $(z_\mu,w(X-z_\mu))$ is a pair of definition for $w$ over $K$ for some $z_\mu\in E$. Further, for all $\b\in\overline{v}\overline{K}$ we have that $\b< w (X-z_\mu)$ if and only if $\b<\d_\nu$ for all $\nu<\l$.	
\end{Theorem}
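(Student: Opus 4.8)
The plan is to prove the cycle $(i)\Longrightarrow(ii)$, $(ii)\Longrightarrow(i)$, and $(ii)\Longleftrightarrow(iii)$, mirroring the proof of Theorem \ref{Thm pcs alg equivalences} but with every comparison between the value $\a:=\overline{w}(X-a)$ and the $\d_\nu$ reversed, reflecting the pcs/pds duality. Throughout I work with a common extension $\overline{w}$ of $w$ and $\overline{v}$ to $\overline{K}(X)$.

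For $(i)\Longrightarrow(ii)$ I take $\overline{w}=\overline{v}_E$. By the discussion in Section \ref{Section valns induced by pms}, applied over $\overline{K}$, the element $a$ is a limit of $E$ if and only if $\overline{v}_E(X-a)\notin\overline{v}\overline{K}$; by Lemma \ref{Lemma v(y-K)} the latter is equivalent to $\overline{v}_E(X-a)=\max\overline{v}_E(X-\overline{K})$, i.e. to $(a,\overline{v}_E(X-a))$ being a pair of definition. Since $E$ is a pds the extension is value transcendental, so every pair of definition has value outside $\overline{v}\overline{K}$, which closes the converse direction.

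The technical heart is $(ii)\Longrightarrow(i)$, where I must upgrade the abstract condition on pairs of definition to the valuation identity $W=V_E$. Fix a limit $a$ of $E$; by (ii), $\a=\overline{w}(X-a)=\max\overline{w}(X-\overline{K})\notin\overline{v}\overline{K}$. The first step locates $\a$ relative to $\{\d_\nu\}$: writing $\overline{w}(X-z_\nu)=\min\{\a,\d_\nu\}$ for large $\nu$, the hypothesis that $X$ is not a limit rules out $\a>\d_\nu$ (which would force $\overline{w}(X-z_\nu)=\d_\nu$ ultimately strictly decreasing, making $X$ a limit), so $\a<\d_\nu$ for all $\nu$. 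The second step is the cut relation: for $\b\in\overline{v}\overline{K}$, $\b<\a$ iff $\b<\d_\nu$ for all $\nu$; the forward direction is transitivity, and for the converse I choose $b\in\overline{K}$ with $\overline{v}(a-b)=\b$, note by Lemma \ref{Lemma limit} that $\b<\d_\nu$ for all $\nu$ makes $b$ not a limit, whence by (ii) $\overline{w}(X-b)=\min\{\a,\b\}<\a$, forcing $\b<\a$. With these, for $f\in K[X]$ factored over $\overline{K}$ I obtain $wf=j\a+\b$ and $vf(z_\nu)=j\d_\nu+\b$ for large $\nu$ (the $j$ root-limits contribute $\a$ resp. $\d_\nu$, and the remaining factors a common $\b\in vK$ since $\overline{w}(X-a_i)=\overline{v}(z_\nu-a_i)$ for non-limits $a_i$), and likewise $w\phi=d\a+\b$, $v\phi(z_\nu)=d\d_\nu+\b$ for rational $\phi$ with $d=\deg\dom_E(\phi)$. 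Finally I compare signs: since $\overline{v}\overline{K}$ is divisible and $\a\notin\overline{v}\overline{K}$, one has $d\a+\b\neq0$ whenever $d\neq0$, so dividing by $d$ reduces $d\a+\b\geq0$ to a comparison of $\a$ with an element of $\overline{v}\overline{K}$, which the cut relation translates into the eventual sign of $d\d_\nu+\b$; strict monotonicity of $\{\d_\nu\}$ reconciles the strict versus non-strict inequality at the boundary in the cases $d>0$ and $d<0$. This yields $w\phi\geq0$ iff $v\phi(z_\nu)\geq0$ for large $\nu$, i.e. $W=V_E$.

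For $(ii)\Longleftrightarrow(iii)$: assuming (ii), every $z_\mu\in E$ is a limit, so $(z_\mu,w(X-z_\mu))$ is a pair of definition with $w(X-z_\mu)=\a$, and the cut relation from the previous step is exactly the second clause of (iii). Conversely, assuming (iii), I set $\a:=w(X-z_\mu)=\max\overline{w}(X-\overline{K})\notin\overline{v}\overline{K}$ and, for arbitrary $a\in\overline{K}$, write $\overline{w}(X-a)=\min\{\a,\g\}$ with $\g:=\overline{v}(z_\mu-a)$; Lemma \ref{Lemma limit} shows $a$ is a limit iff $\g\geq\d_\nu$ for some $\nu$, which by the cut clause of (iii) is iff $\g>\a$, i.e. iff $\overline{w}(X-a)=\a$ is maximal, i.e. iff $(a,\overline{w}(X-a))$ is a pair of definition. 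I expect the main obstacle to be the sign bookkeeping in $(ii)\Longrightarrow(i)$, where the reversed monotonicity of $\{\d_\nu\}$ and the two cases $d>0$, $d<0$ must be handled uniformly.
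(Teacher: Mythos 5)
Your proposal is correct and follows essentially the same route as the paper's own proof: $(i)\Rightarrow(ii)$ from the discussion in Section \ref{Section valns induced by pms} together with Lemma \ref{Lemma v(y-K)}; $(ii)\Rightarrow(i)$ by first locating $\a$ below every $\d_\nu$, then establishing the cut relation, then writing $w\phi = d\a+\b$ and $v\phi(z_\nu)=d\d_\nu+\b$ and comparing signs to get $W=V_E$; and $(ii)\Leftrightarrow(iii)$ via the correspondence between limits of $E$ and pairs of definition. If anything, your sign bookkeeping is slightly more careful than the paper's: the paper asserts ``$d\a+\b\geq 0$ if and only if $d\d_\nu+\b\geq 0$ for all $\nu<\l$,'' which for $d<0$ is accurate only in the ``for all $\nu$ sufficiently large'' form you use (the eventual sign), though this does not affect the paper's conclusion since $V_E$ is itself defined by eventual inequalities.
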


\begin{proof}
	The assertion $(i)\Longrightarrow (ii)$ follows from our discussions in Section \ref{Section valns induced by pms}.
	
	\pars We assume that $\overline{w}$ is a common extension such that $(a,\overline{w}(X-a))$ is a pair of definition for $w$ over $K$ if and only if $a$ is a limit of $E$. Take a limit $a$ of $E$ and set $\a:= \overline{w}(X-a)$. Then $\a\notin\overline{v}\overline{K}$. The fact that each element of $E$ is a limit of $E$ implies that $\overline{w}(X-z_\nu) =\a$ for each $\nu<\l$. Suppose that $\a>\d_\nu$ for some and hence all $\nu$ sufficiently large. It follows consequently from Lemma \ref{Lemma limit} that $X$ is a limit of $E$ which contradicts our starting assumption. Hence,
	\[ \a<\d_\nu \text{ for all }\nu<\l. \]
	Take any $\b\in\overline{v}\overline{K}$ and $b\in\overline{K}$ such that $\overline{v}(a-b) = \b$. Suppose that $\a<\b<\d_\nu$ for all $\nu$. It follows that $(b,\a)$ is a pair of definition for $w$ over $K$ but $b$ is not a limit of $E$, contradicting our starting assumptions. Hence, for all $\b\in\overline{v}\overline{K}$,
	\[ \a>\b \text{ if and only if } \d_\nu>\b \text{ for all }\nu<\l. \]
	As a consequence, for fixed $d\in\ZZ$ and $\b\in\overline{v}\overline{K}$ we obtain that,
	\[ d\a + \b\geq 0 \text{ if and only if }d\d_\nu+\b \geq 0 \text{ for all }\nu<\l. \]
	Take any $c\in\overline{K}$ which is not a limit of $E$. It follows that $\overline{w}(X-c) < \a$ and hence 
	\[ \overline{w}(X-c) = \overline{v}(z_\nu-c) \text{ for all }\nu<\l. \]
	The above observations imply that for any rational function $\phi(X)\in K(X)$, we have uniquely determined $d\in\ZZ$ and $\b\in\overline{v}\overline{K}$ such that 
	\begin{align*}
		w\phi &= d\a + \b,\\
		v \phi(z_\nu) &= d\d_\nu +\b \text{ for all } \nu\text{ sufficiently large}.
	\end{align*}  
It now follows that $v\phi(z_\nu)\geq 0$ for all $\nu$ sufficiently large whenever $w\phi\geq 0$. Conversely, assume that $v\phi(z_\nu)\geq 0$ for all $\nu$ sufficiently large. Then $\{\d_\nu\}_{\nu<\l}$ being a strictly decreasing sequence implies that $v\phi(z_\nu)\geq 0$ for all $\nu<\l$ and hence $w\phi\geq 0$. As a consequence, we obtain $W = V_E$ where $W$ is the valuation domain corresponding to the valuation $w$. We have thus proved the assertion $(ii)\Longrightarrow (i)$.

\pars We now assume that $(z_\mu, w(X-z_\mu))$ is a pair of definition for $w$ over $K$ for some $z_\mu\in E$. Set $\a:= w(X-z_\mu)$. Take a common extension $\overline{w}$ of $w$ and $\overline{v}$ to $\overline{K}(X)$. Take any $\b\in\overline{v}\overline{K}$ and $b\in\overline{K}$ such that $\overline{v}(z_\mu-b) = \b$. The fact that $\{\d_\nu\}_{\nu<\l}$ is strictly decreasing implies that $\b\geq\d_\nu$ for all $\nu$ sufficiently large if and only if $\b\geq \d_\nu$ for some $\nu<\l$. It now follows from Lemma \ref{Lemma limit} that $b$ is not a limit of $E$ if and only if $\b<\d_\nu$ for all $\nu<\l$. On the other hand, $\b<\a$ if and only if $(b,\overline{w}(X-b))$ is not a pair of definition for $w$ over $K$. We thus have the equivalence $(iii)\Longleftrightarrow(ii)$. 
\end{proof}

\begin{Corollary}
	Let $(K(X)|K,w)$ be a value transcendental extension and $E$ a pds in $(K,v)$ diverging to infinity. Assume that $X$ is not a limit of $E$. Then $w = v_E$.
\end{Corollary}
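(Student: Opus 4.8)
The plan is to deduce the corollary from Theorem \ref{Thm E pds} by verifying its condition $(iii)$. Fix a common extension $\overline{w}$ of $w$ and $\overline{v}$ to $\overline{K}(X)$. Since $X$ is not a limit of $E$, Lemma \ref{Lemma limit or const seq}, applied to the pms $E$ in the valued field $(K(X),w)$ with the element $X$ playing the role of $y$, shows that the sequence $\{ w(X-z_\nu) \}_{\nu<\l}$ is ultimately constant; call its ultimate value $\g$. First I would fix a large index $\mu_0$ lying in this stable range and, for $\nu>\mu_0$ in the stable range, use the pds relation $w(z_\nu - z_{\mu_0}) = \d_\nu$ together with the decomposition $z_\nu - z_{\mu_0} = (X-z_{\mu_0}) - (X-z_\nu)$ and the ultrametric inequality to conclude that $\d_\nu \geq \g$ for all $\nu$ sufficiently large.

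The key step is to promote this to the strict bound $\g < \overline{v}\overline{K}$. Here the hypothesis that $E$ diverges to infinity enters: $\{\d_\nu\}_{\nu<\l}$ is coinitial in $vK$, and since $vK$ is coinitial in $\overline{v}\overline{K}$ (as $\overline{v}\overline{K}$ lies in the divisible hull $\QQ\tensor_{\ZZ}vK$), the sequence $\{\d_\nu\}_{\nu<\l}$ is coinitial in $\overline{v}\overline{K}$ as well. Given any $\b\in\overline{v}\overline{K}$, the strict decrease of $\{\d_\nu\}_{\nu<\l}$ then produces a sufficiently large $\nu$ with $\g \leq \d_\nu < \b$, whence $\g<\b$. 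As $\b$ was arbitrary this gives $\g < \overline{v}\overline{K}$, and in particular $\g \notin \overline{v}\overline{K}$.

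Next I would show that $(z_{\mu_0}, \g)$ is a pair of definition for $w$ over $K$, i.e. that $\g = \max \overline{w}(X - \overline{K})$. If some $a \in \overline{K}$ satisfied $\overline{w}(X-a) > \g$, then from $a - z_{\mu_0} = (X - z_{\mu_0}) - (X-a)$ and the ultrametric inequality one would get $\overline{v}(a - z_{\mu_0}) = \g$, forcing $\g \in \overline{v}\overline{K}$ and contradicting the previous paragraph. Hence no such $a$ exists, so $\g = w(X - z_{\mu_0}) = \max \overline{w}(X-\overline{K})$; since $z_{\mu_0} \in K$, the pair $(z_{\mu_0}, w(X-z_{\mu_0}))$ is a pair of definition for $w$ over $K$.

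It remains to check the second half of condition $(iii)$. For every $\b \in \overline{v}\overline{K}$ the inequality $\b < w(X-z_{\mu_0}) = \g$ fails because $\g < \overline{v}\overline{K}$, while $\b < \d_\nu$ for all $\nu<\l$ also fails because $\{\d_\nu\}_{\nu<\l}$ is coinitial in $\overline{v}\overline{K}$; thus the two conditions are (vacuously) equivalent. Condition $(iii)$ of Theorem \ref{Thm E pds} therefore holds, and we conclude $w = v_E$. I expect the main obstacle to be the two middle steps: converting the ultrametric lower bound $\d_\nu \geq \g$ into the strict separation $\g < \overline{v}\overline{K}$ via coinitiality, and then ruling out the existence of a center $a \in \overline{K}$ yielding a strictly larger value $\overline{w}(X-a) > \g$, both of which rest on the divergence-to-infinity hypothesis.
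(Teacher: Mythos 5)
Your proposal is correct and follows essentially the same route as the paper: both arguments verify condition $(iii)$ of Theorem \ref{Thm E pds} by showing that $w(X-z_\mu)$ lies below all of $\overline{v}\overline{K}$, so that both sides of the equivalence in $(iii)$ are vacuously false, with the divergence-to-infinity hypothesis entering through coinitiality of $\{\d_\nu\}_{\nu<\l}$ in $\overline{v}\overline{K}$. The only differences are mechanical: the paper obtains $w(X-z_\mu)<\d_\nu$ for all $\nu<\l$ in one stroke from Lemma \ref{Lemma limit} (if $w(X-z_\mu)\geq\d_\nu$ held, then $X$ would be a limit of $E$), whereas you reach the same conclusion via Lemma \ref{Lemma limit or const seq} together with an ultrametric computation, and you additionally spell out the pair-of-definition verification that the paper leaves implicit in its appeal to Theorem \ref{Thm E pds}.
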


\begin{proof}
	Suppose that $w(X-z_\mu) \geq \d_\nu$ for some $z_\mu\in E$ and $\nu<\l$. The fact that $\{\d_\nu\}_{\nu<\l}$ is a decreasing sequence implies that $X$ is a limit of $E$, contradicting our assumption. It follows that $w(X-z_\mu) < \d_\nu$ for all $\nu<\l$. By definition, $\{\d_\nu\}_{\nu<\l}$ is coinitial in $\overline{v}\overline{K}$. Hence $w(X-z_\mu)\notin\overline{v}\overline{K}$ and $w(X-z_\mu)<\overline{v}\overline{K}$. It now follows from Theorem \ref{Thm E pds} that $w=v_E$.
\end{proof}

%=====================================================================================

\section{Rank computation} \label{Section rank v_E}

 Let $\G$ be an ordered abelian group. A subset $\D$ of $\G$ is said to be an \textbf{isolated subgroup} if it satisfies the following conditions:
\begin{itemize}[noitemsep]
	\item $\D$ is a proper subgroup of $\G$,
	\item if $\a\in\D$, then $\b\in\D$ for all $\b\in\G$ with $-\a\leq\b\leq\a$.
\end{itemize}
Thus the set of all isolated subgroups of $\G$ are ordered by inclusion. The \textbf{rank of $\G$} is defined to be the ordinal type of the set of all isolated subgroups of $\G$ and we write it as $\rk \G$.

We now assume that $\G_1,\dotsc,\G_n$ are ordered abelian groups with $\rk\G_i = r_i<\infty$ for all $i=1,\dotsc,n$. Let the chain of isolated subgroups of $\G_i$ be given by
\[ (0) = H_{i,1}\subsetneq H_{i,2}\subsetneq \dotsc \subsetneq H_{i,r_i}. \]
Consider the ordered abelian group $\G:= (\G_1\dirsum\G_2\dirsum\dotsc\dirsum\G_n)_{\lex}$. It is a straightforward check that the chain of isolated subgroups of $\G$ is given by $H_{1,1}\dirsum \dotsc \dirsum H_{n,1} \subsetneq H_{1,1}\dirsum \dotsc \dirsum H_{n-1,1} \dirsum H_{n,2} \subsetneq \dotsc \subsetneq H_{1,1}\dirsum \dotsc \dirsum H_{n-1,1} \dirsum H_{n,r_n} \subsetneq H_{1,1}\dirsum \dotsc \dirsum H_{n-1,1} \dirsum \G_n \subsetneq H_{1,1}\dirsum \dotsc \dirsum H_{n-2,1} \dirsum H_{n-1, 2} \dirsum \G_n \subsetneq \dotsc \subsetneq H_{1,r_1}\dirsum \G_2\dirsum\dotsc\dirsum \G_n$. It follows that
\begin{equation}\label{eqn rank G dirsum H}
	\rk (\G_1\dirsum\G_2\dirsum\dotsc\dirsum\G_n)_{\lex} = \rk \G_n + \dotsc + \rk\G_1.
\end{equation}

\parm For the rest of the section, we fix an extension $\overline{v}$ of $v$ to $\overline{K}$. Given a pms $E$ in $(K,v)$ and assuming $\rk vK$ is a finite positive integer, our goal in this section is to compute $\rk v_E K(X)$. The extension $(K(X)|K,v_E)$ is immediate when $E$ is a pcs of transcendental type and is a pure residue transcendental extension when $E$ is a pcts [Theorem \ref{Thm K(X)|K is pure}]. Hence $v_E K(X) = vK$ in either case. So we will assume that $E$ is a pcs of algebraic type or $E$ is a pds. It is known that that the rank of a valuation is invariant under algebraic extensions [\ref{ZS2}, Chapter VI, \S 11, Lemma 2]. It follows that $\rk \overline{v}_E \overline{K}(X) = \rk v_E K(X)$ and $\rk\overline{v}\overline{K} = \rk vK$. 

\pars We first assume that $E$ is a pcs of algebraic type. Take $a\in\overline{K}$ such that 
\[ a \text{ is a limit of }E. \]
Let $\rk vK = n$ for some positive integer $n$. It has been observed in [\ref{Abh book}, Proposition 2.10] that $\overline{v}\overline{K}$ is order isomorphic to $(\G_1\dirsum \dotsc \dirsum \G_n)_{\lex}$, where $\G_i$ are subgroups of $\RR$. From now on, we will identify $\overline{v}\overline{K}$ with $(\G_1\dirsum \dotsc \dirsum \G_n)_{\lex}$. Assume that under this identification, 
\begin{equation}\label{eqn identification}
	\d_\nu \text{ corresponds to } (\d_{1,\nu}, \dotsc, \d_{n,\nu}) \text{ for all }\nu<\l.
\end{equation} 
Observe that the fact that $\overline{v}\overline{K}$ is a divisible group implies that $\G_i$ is a divisible group for each $i$. Hence, for any $r_i\in\RR$ we have that
\[ r_i\notin\G_i \Longleftrightarrow r_i \text{ is torsion free over }\G_i. \]
Recall that the Dedekind-MacNeille completion of $\QQ$ is $\tilde{\RR}:= \{-\infty\}\union\RR\union\{\infty\}$, where $\tilde{\RR}$ is endowed with an order extending the canonical order on $\RR$ such that $-\infty<r<\infty$ for all $r\in\RR$ [\ref{Schroder ordered sets}, Example 5.3.3]. By definition, $\tilde{\RR}$ is also the Dedekind-MacNeille completion of $\RR$. Hence any subset $A_i\subseteq \G_i$ has a unique supremum and infimum in $\tilde{\RR}$, which we will denote as $\sup A_i$ and $\inf A_i$ respectively. 

\pars The fact that $\{\d_\nu\}_{\nu<\l}$ is a strictly increasing sequence implies that $\{ \d_{1,\nu} \}_{\nu<\l}$ is an increasing sequence. Further, $E$ is a Cauchy sequence if and only if $\{\d_{1,\nu}\}_{\nu<\l}$ is cofinal in $\G_1$. Now the fact that $\G_1$ is a subgroup of the archimedean ordered group $\RR$ implies that $\G_1$ is cofinal in $\RR$. It follows that $\{\d_{1,\nu}\}_{\nu<\l}$ is cofinal in $\G_1$ if and only if $\{\d_{1,\nu}\}_{\nu<\l}$ is cofinal in $\RR$, that is, $\sup \{\d_{1,\nu} \}_{\nu<\l} = \infty$. Consider an embedding of $\overline{v}\overline{K}$ into the ordered abelian group $(\ZZ\dirsum \overline{v}\overline{K})_{\lex}$ given by $\b\mapsto (0,\b)$ for all $\b\in\overline{v}\overline{K}$. Set $\a:= (1,0)$ and take the extension $\overline{v}_{a,\a}$ of $\overline{v}$ to $\overline{K}(X)$. In light of Lemma \ref{Lemma limit} we observe that $X$ is a limit of $E$. It then follows from the corollary to Theorem \ref{Thm pcs alg equivalences} that $\overline{v}_{a,\a} = \overline{v}_E$. Observe further that $(\overline{K}(X)|\overline{K}, \overline{v}_E)$ is a pure value transcendental extension and hence $\overline{v}_E \overline{K}(X) = \overline{v}\overline{K} \dirsum\ZZ\a$. It follows that $\overline{v}_E\overline{K}(X) = (\ZZ\dirsum\overline{v}\overline{K})_{\lex}$. Hence, 
\[ \rk v_E K(X) = \rk vK +1 \text{ when $E$ is a Cauchy sequence}. \]    

\pars We now assume that $E$ is not a Cauchy sequence. Hence there exists some $r_1\in\RR$ such that
\[ r_1 = \sup\{\d_{1,\nu}\}_{\nu<\l}. \]
We first assume that $r_1\notin\G_1$. Hence $\d_{1,\nu} < r_1$ for all $\nu<\l$ and $\{\d_{1,\nu}\}_{\nu<\l}$ is not ultimately constant. Consider the ordered abelian group $(\G_1^\prime\dirsum\G_2\dirsum\dotsc\dirsum\G_n)_{\lex}$ where $\G_1^\prime := \G_1\dirsum\ZZ r_1$. Take $\a:= (r_1,0,\dotsc,0)\notin\overline{v}\overline{K}$ and the extension $\overline{v}_{a,\a}$ of $\overline{v}$ to $\overline{K}(X)$. Observe that $\a>\d_\nu$ for all $\nu<\l$. It thus follows from Lemma \ref{Lemma limit} that $X$ is a limit of $E$. Take any $(\b_1,\dotsc,\b_n)\in\overline{v}\overline{K}$. If $\b_1 = \d_{1,\nu}$ for some $\nu$, then the fact that $\{\d_{1,\nu}\}_{\nu<\l}$ is not ultimately constant implies that $\b_1 < \d_{1,\mu}$ for all $\mu$ sufficiently large. It follows that $(\b_1,\dotsc,\b_n) > \d_\nu$ for all $\nu<\l$ if and only if $\b_1 > \d_{1,\nu}$ for all $\nu<\l$. The facts that $r_1 = \sup\{\d_{1,\nu}\}_{\nu<\l}$, $r_1\notin \G_1$ and $\b_1\in\G_1$ imply that $\b_1 > \d_{1,\nu}$ for all $\nu<\l$ if and only if $\b_1 > r_1$. It follows that 
\[ (\b_1,\dotsc,\b_n) > \d_\nu \text{ for all }\nu<\l \text{ if and only if } (\b_1,\dotsc,\b_n) > \a. \]     
We can then conclude from Theorem \ref{Thm pcs alg equivalences} that $\overline{v}_{a,\a} = \overline{v}_E$. The observation $\overline{v}_E \overline{K}(X) = \overline{v}\overline{K} \dirsum\ZZ\a$ implies that $\overline{v}_E \overline{K}(X) = (\G_1^\prime\dirsum\G_2\dirsum\dotsc\dirsum\G_n)_{\lex}$. The fact that $\G_1^\prime$ is a subgroup of $\RR$ implies that $\rk \G_1^\prime = 1$. It then follows from (\ref{eqn rank G dirsum H}) that $\rk \overline{v}_E \overline{K}(X) = n$. As a consequence, 
\[ \rk v_E K(X) = \rk vK \text{ when } r_1 \in \RR\setminus \G_1. \]

\pars We now assume that $r_1 \in \G_1$. There are two cases: $r_1 > \d_{1,\nu}$ for all $\nu<\l$, and, $r_1 = \d_{1,\nu}$ for all $\nu\geq\nu_1$. We first assume that
\[ r_1 > \d_{1,\nu} \text{ for all }\nu<\l. \]
As a consequence, the sequence $\{\d_{1,\nu}\}_{\nu<\l}$ is not ultimately constant. Take any $(\b_1,\dotsc,\b_n)\in\overline{v}\overline{K}$. Our prior observations show that 
\[ (\b_1,\dotsc,\b_n) > \d_\nu \text{ for all } \nu<\l \text{ if and only if } \b_1 \geq r_1. \]
Consider an embedding of $\overline{v}\overline{K}$ into the ordered abelian group $(\G_1\dirsum\ZZ \dirsum \G_2\dirsum \dotsc \dirsum \G_n)_{\lex}$ given by $(\b_1,\dotsc, \b_n) \mapsto (\b_1, 0, \b_2, \dotsc , \b_n)$. Take $\a:= (r_1, -1, 0, \dotsc , 0)$. By definition, $(\b_1,0,\b_2,\dotsc,\b_n) > \a$ if and only if $\b_1\geq r_1$. We have thus obtained that 
\[ (\b_1,\dotsc,\b_n) > \d_\nu \text{ for all } \nu<\l \text{ if and only if } (\b_1,0,\b_2,\dotsc,\b_n) > \a. \]
Consider the valuation $\overline{v}_{a,\a}$ on $\overline{K}(X)$. The fact that $r_1 > \d_{1,\nu}$ for all $\nu<\l$ implies that $\a> \d_\nu$ for all $\nu<\l$. Consequently, $X$ is a limit of $E$. It then follows from Theorem \ref{Thm pcs alg equivalences} that $\overline{v}_{a,\a} = \overline{v}_E$. The observation $\overline{v}_E \overline{K}(X) = \overline{v}\overline{K} \dirsum\ZZ\a$ implies that $\overline{v}_E \overline{K}(X) = (\G_1\dirsum\ZZ\dirsum\G_2\dirsum\dotsc\dirsum\G_n)_{\lex}$. Hence, 
\[ \rk v_E K(X) = \rk vK +1 \text{ when } r_1\in\G_1\text{ and } r_1 > \d_{1,\nu} \text{ for all }\nu<\l. \]

\pars We now consider the final case, that is, $r_1 = \d_{1,\nu}$ for all $\nu\geq \nu_1$. The fact that $\{\d_\nu\}_{\nu<\l}$ is strictly increasing implies that $\{\d_{2,\nu}\}_{\nu\geq \nu_1}$ is an increasing sequence. We can now use the preceding arguments to draw similar conclusions. We briefly sketch the arguments in each case without going into the detailed checks. 

\pars We first assume that $\sup \{\d_{2,\nu}\}_{\nu\geq \nu_1} = \infty$. Consider the ordered abelian group $(\G_1\dirsum\ZZ\dirsum\G_2\dirsum\dotsc\dirsum\G_n)_{\lex}$ and take $\a:= (r_1,1,0,\dotsc,0)$. Embed $\overline{v}\overline{K}$ in $(\G_1\dirsum\ZZ\dirsum\G_2\dirsum\dotsc\G_n)_{\lex}$ by setting $(\b_1,\dotsc,\b_n)\mapsto (\b_1,0,\b_2,\dotsc,\b_n)$. Observe that $\a>\d_\nu$ for all $\nu<\l$. In this case we obtain that $\overline{v}_E \overline{K}(X) = (\G_1\dirsum\ZZ\dirsum\G_2\dirsum\dotsc\dirsum\G_n)_{\lex}$. Consequently, $ \rk v_E K(X) = \rk vK +1$. 

\pars We now assume that there exists $r_2\in\RR$ such that
\[ r_2 = \sup\{ \d_{2,\nu} \}_{\nu\geq \nu_1}. \]
Further, assume that $r_2\notin\G_2$. Take $\a:= (r_1,r_2,0,\dotsc,0)$ in the ordered abelian group $(\G_1\dirsum\G_2^\prime\dirsum\dotsc\dirsum\G_n)_{\lex}$, where $\G_2^\prime := \G_2 \dirsum \ZZ r_2$. The fact that $r_2\notin\G_2$ implies that $r_2>\d_{2,\nu}$ for all $\nu\geq\nu_1$ and consequently, $\a>\d_\nu$ for all $\nu<\l$. In this case we obtain that $\overline{v}_E \overline{K}(X) = (\G_1\dirsum\G_2^\prime\dirsum\dotsc\dirsum\G_n)_{\lex}$ and hence $\rk v_E K(X) = \rk vK$.

\pars We now assume that $r_2 \in \G_2$ and $r_2 > \d_{2,\nu}$ for all $\nu\geq \nu_1$. Hence $\{\d_{2,\nu}\}_{\nu\geq\nu_1}$ is not ultimately constant. Embed $\overline{v}\overline{K}$ in the ordered abelian group $(\G_1\dirsum\G_2\dirsum\ZZ\dirsum\G_3\dirsum\dotsc\dirsum\G_n)_{\lex}$ by setting $(\b_1,\dotsc,\b_n) \mapsto (\b_1,\b_2,0,\b_3,\dotsc,\b_n)$. Take $\a:= (r_1,r_2,-1,0,\dotsc,0)$. Observe that $\a>\d_\nu$ for all $\nu<\l$. Take any $(\b_1,\dotsc,\b_n)\in\overline{v}\overline{K}$. The fact that $\{\d_{2,\nu}\}_{\nu\geq\nu_1}$ is not a ultimately constant sequence implies that $(\b_1,\dotsc,\b_n) > \d_\nu$ for all $\nu<\l$ if and only if $\b_1 > r_1$, or, $\b_1 = r_1$ and $\b_2 > \d_{2,\nu}$ for all $\nu\geq \nu_1$. Observe that $\b_2 > \d_{2,\nu}$ for all $\nu\geq\nu_1$ if and only if $\b_2\geq r_2$. It follows that $(\b_1,\dotsc,\b_n) > \d_\nu$ for all $\nu<\l$ if and only if $(\b_1,\b_2) > (r_1,r_2)$. On the other hand, $(\b_1,\b_2,0,\b_3,\dotsc,\b_n) > \a = (r_1,r_2,-1,0,\dotsc,0)$ if and only if $(\b_1,\b_2) > (r_1,r_2)$. We have thus obtained that 
\[ (\b_1,\dotsc,\b_n) > \d_\nu \text{ for all } \nu<\l \text{ if and only if } (\b_1,\b_2,0,\b_3,\dotsc,\b_n) > \a. \]
Similar arguments as above now imply that $\overline{v}_E\overline{K}(X) = (\G_1\dirsum\G_2\dirsum\ZZ\dirsum\G_3\dirsum\dotsc\dirsum\G_n)_{\lex}$ and as a consequence, $\rk v_E K(X) = \rk vK+1$.

\pars The remaining case is when $r_2 = \d_{2,\nu}$ for all $\nu\geq\nu_2$. It follows that $\{\d_{3,\nu}\}_{\nu\geq\nu_2}$ is an increasing sequence. We can again study the separate cases in a similar fashion. We now assume that 
\begin{align*}
	\sup\{\d_{1,\nu}\}_{\nu<\l} &= r_1 = \d_{1,\nu} \text{ for all } \nu\geq \nu_1,\\
	\sup\{\d_{i,\nu}\}_{\nu\geq \nu_{i-1}} &= r_i = \d_{i,\nu} \text{ for all } \nu\geq \nu_i,
\end{align*}
for all $i=2,\dotsc,n-1$. Then the sequence $\{\d_{n,\nu}\}_{\nu\geq\nu_{n-1}}$ is an increasing sequence. The cases $\sup\{\d_{n,\nu}\}_{\nu\geq\nu_{n-1}} = \infty$ and $r_n := \sup\{\d_{n,\nu}\}_{\nu\geq\nu_{n-1}}  \notin \G_n$ are treated similarly as before. We now assume that $r_n \in \G_n$. If $r_n = \d_{n,\nu}$ for all $\nu\geq\nu_n$, then the sequnece $\{ (\d_{1,\nu},\dotsc,\d_{n,\nu})\}_{\nu<\l}$ is ultimately constant, contradicting the fact that $\{\d_\nu\}_{\nu<\l}$ is strictly increasing. Thus $r_n > \d_{n,\nu}$ for all $\nu\geq\nu_{n-1}$, in which case we obtain $\rk v_E K(X) = \rk vK +1$ as above. 
\newline We compile our observations in the tree diagram Figure \ref{fig E pcs base level}.

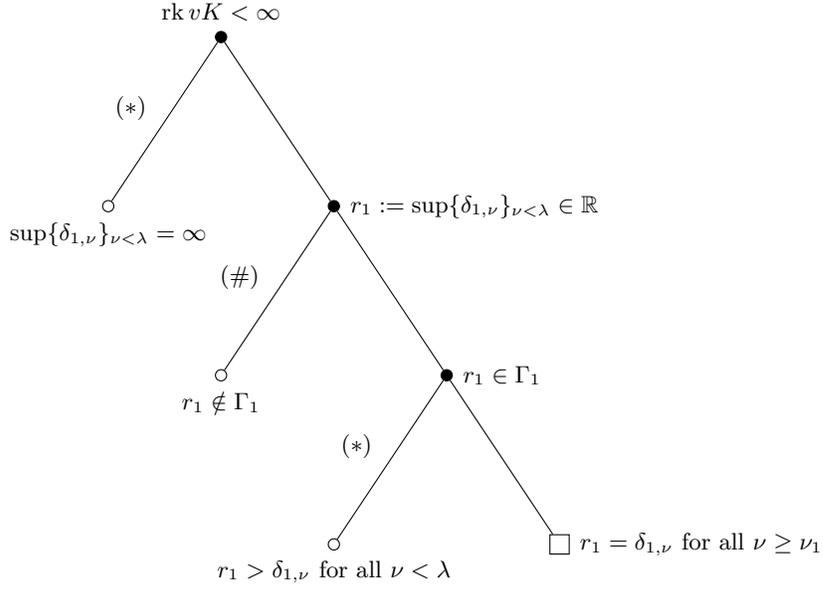
\begin{figure}[h]
	\begin{tikzpicture}[scale=1.5,font=\footnotesize]
		\tikzstyle{level 1}=[level distance=15mm, sibling distance=20mm]
		\tikzstyle{level 2}=[level distance=15mm, sibling distance=20mm]	
		\node(0) [solid node,label=above:{\textbf{$\rk vK <\infty$}}]{} 
		child { node [hollow node, label=below:{$\sup\{ \d_{1,\nu} \}_{\nu<\l} = \infty$}] {}
			edge from parent node [left,xshift=-3,yshift=5] {($*$)} }
		child { node [solid node,label=right:{$r_1:= \sup\{ \d_{1,\nu} \}_{\nu<\l} \in \RR$}] {}
			child { node [hollow node, label=below:{$r_1\notin\G_1$}] {}
				edge from parent node [left,xshift=-3,yshift=5] {($\#$)} }
			child { node [solid node,label=right:{$r_1\in\G_1$}] {}
				child { node [hollow node, label=below:{$r_1>\d_{1,\nu}$ for all $\nu<\l$}] {}
					edge from parent node [left,xshift=-3,yshift=5] {($*$)} }
				child { node [rectangle node, label=right:{$r_1=\d_{1,\nu}$ for all $\nu\geq\nu_1$}] {}
					edge from parent node [left,xshift=-3,yshift=5] {} }
				edge from parent node [left,xshift=-3,yshift=5] {} }
			edge from parent node [left,xshift=-3,yshift=5] {} };
	\end{tikzpicture}
\caption{$E$ pcs of alg type: base level}\label{fig E pcs base level}
\end{figure}

If we arrive at a hollow node in Figure \ref{fig E pcs base level} with a branch marked $(*)$ we have $\rk v_E K(X) = \rk vK +1$. If we arrive at a hollow node with a branch marked $(\#)$ we have $\rk v_E K(X) = \rk vK$. We keep going when we arrive at the square node. Assume that we have arrived at the $i$-th square node for some $i<n$. Note that this implies we have arrived at the square node in each of the previous steps. We then continue as in Figure \ref{fig E pcs i level}.   

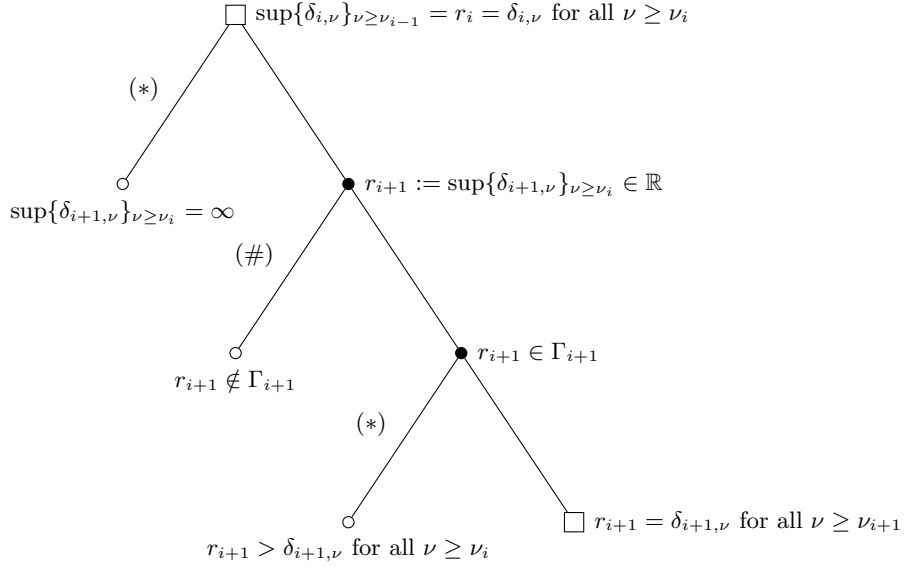
\begin{figure}[h]
	\centering
\begin{tikzpicture}[scale=1.5,font=\footnotesize]
	\tikzstyle{level 1}=[level distance=15mm, sibling distance=20mm]
	\tikzstyle{level 2}=[level distance=15mm, sibling distance=20mm]	
	\node(0) [rectangle node,label=right:{$\sup\{\d_{i,\nu}\}_{\nu\geq \nu_{i-1}} = r_i=\d_{i,\nu}$ for all $\nu\geq\nu_i$}]{} 
	child { node [hollow node, label=below:{$\sup\{ \d_{i+1,\nu} \}_{\nu\geq\nu_i} = \infty$}] {}
		edge from parent node [left,xshift=-3,yshift=5] {($*$)} }
	child { node [solid node,label=right:{$r_{i+1}:= \sup\{ \d_{i+1,\nu} \}_{\nu\geq\nu_i} \in \RR$}] {}
		child { node [hollow node, label=below:{$r_{i+1}\notin\G_{i+1}$}] {}
			edge from parent node [left,xshift=-3,yshift=5] {($\#$)} }
		child { node [solid node,label=right:{$r_{i+1}\in\G_{i+1}$}] {}
			child { node [hollow node, label=below:{$r_{i+1}>\d_{i+1,\nu}$ for all $\nu\geq\nu_i$}] {}
				edge from parent node [left,xshift=-3,yshift=5] {($*$)} }
			child { node [rectangle node, label=right:{$r_{i+1}=\d_{i+1,\nu}$ for all $\nu\geq\nu_{i+1}$}] {}
				edge from parent node [left,xshift=-3,yshift=5] {} }
			edge from parent node [left,xshift=-3,yshift=5] {} }
		edge from parent node [left,xshift=-3,yshift=5] {} };
\end{tikzpicture}
\caption{$E$ pcs of alg type: $(i+1)$-th level}\label{fig E pcs i level}
\end{figure}

\pars The dual result holds when $E$ is a pds in $(K,v)$. This is precisely due to the dual nature of Theorem \ref{Thm pcs alg equivalences}$(iii)$ and Theorem \ref{Thm E pds}$(iii)$. Since the arguments are completely similar to the case when $E$ is a pcs of algebraic type, we only state the final results here. Recall that we identify $\overline{v}\overline{K}$ with $(\G_1\dirsum\dotsc\dirsum\G_n)_{\lex}$ where $\G_i$ are subgroups of $\RR$. Further, for each $\nu<\l$, $\d_\nu$ corresponds to $(\d_{1,\nu}, \dotsc, \d_{n,\nu})$ under this identification. We then obtain the tree diagram Figure \ref{fig E pds base level}.

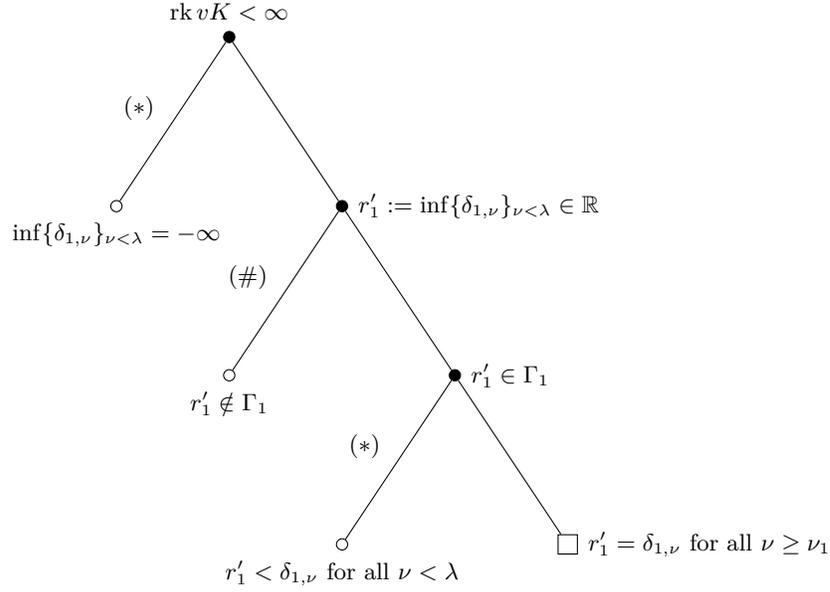
\begin{figure}[h]
	\centering
\begin{tikzpicture}[scale=1.5,font=\footnotesize]
	\tikzstyle{level 1}=[level distance=15mm, sibling distance=20mm]
	\tikzstyle{level 2}=[level distance=15mm, sibling distance=20mm]	
	\node(0) [solid node,label=above:{\textbf{$\rk vK <\infty$}}]{} 
	child { node [hollow node, label=below:{$\inf\{ \d_{1,\nu} \}_{\nu<\l} = -\infty$}] {}
		edge from parent node [left,xshift=-3,yshift=5] {($*$)} }
	child { node [solid node,label=right:{$r_1^\prime:= \inf\{ \d_{1,\nu} \}_{\nu<\l} \in \RR$}] {}
		child { node [hollow node, label=below:{$r_1^\prime\notin\G_1$}] {}
			edge from parent node [left,xshift=-3,yshift=5] {($\#$)} }
		child { node [solid node,label=right:{$r_1^\prime\in\G_1$}] {}
			child { node [hollow node, label=below:{$r_1^\prime<\d_{1,\nu}$ for all $\nu<\l$}] {}
				edge from parent node [left,xshift=-3,yshift=5] {($*$)} }
			child { node [rectangle node, label=right:{$r_1^\prime=\d_{1,\nu}$ for all $\nu\geq\nu_1$}] {}
				edge from parent node [left,xshift=-3,yshift=5] {} }
			edge from parent node [left,xshift=-3,yshift=5] {} }
		edge from parent node [left,xshift=-3,yshift=5] {} };
\end{tikzpicture}
\caption{$E$ pds: base level}\label{fig E pds base level}
\end{figure}

Whenever we arrive at a hollow node in Figure \ref{fig E pds base level} with a branch marked $(*)$ we have $\rk v_E K(X) = \rk vK+1$. We have $\rk v_E K(X) = \rk vK$ whenever we arrive at a hollow node with a brach marked $(\#)$. We arrive at the square node if we have arrived at the square node in each of the preceding steps. In this case we continue as in Figure \ref{fig E pds i level}.

\begin{figure}[h]
	\centering
\begin{tikzpicture}[scale=1.5,font=\footnotesize]
	\tikzstyle{level 1}=[level distance=15mm, sibling distance=20mm]
	\tikzstyle{level 2}=[level distance=15mm, sibling distance=20mm]	
	\node(0) [rectangle node,label=right:{$\inf\{\d_{i,\nu}\}_{\nu\geq \nu_{i-1}} = r_i^\prime=\d_{i,\nu}$ for all $\nu\geq\nu_i$}]{} 
	child { node [hollow node, label=below:{$\inf\{ \d_{i+1,\nu} \}_{\nu\geq\nu_i} = -\infty$}] {}
		edge from parent node [left,xshift=-3,yshift=5] {($*$)} }
	child { node [solid node,label=right:{$r_{i+1}^\prime:= \inf\{ \d_{i+1,\nu} \}_{\nu\geq\nu_i} \in \RR$}] {}
		child { node [hollow node, label=below:{$r_{i+1}^\prime\notin\G_{i+1}$}] {}
			edge from parent node [left,xshift=-3,yshift=5] {($\#$)} }
		child { node [solid node,label=right:{$r_{i+1}^\prime\in\G_{i+1}$}] {}
			child { node [hollow node, label=below:{$r_{i+1}^\prime<\d_{i+1,\nu}$ for all $\nu\geq\nu_i$}] {}
				edge from parent node [left,xshift=-3,yshift=5] {($*$)} }
			child { node [rectangle node, label=right:{$r_{i+1}^\prime=\d_{i+1,\nu}$ for all $\nu\geq\nu_{i+1}$}] {}
				edge from parent node [left,xshift=-3,yshift=5] {} }
			edge from parent node [left,xshift=-3,yshift=5] {} }
		edge from parent node [left,xshift=-3,yshift=5] {} };
\end{tikzpicture}
\caption{$E$ pds: $(i+1)$-th level}\label{fig E pds i level}
\end{figure}
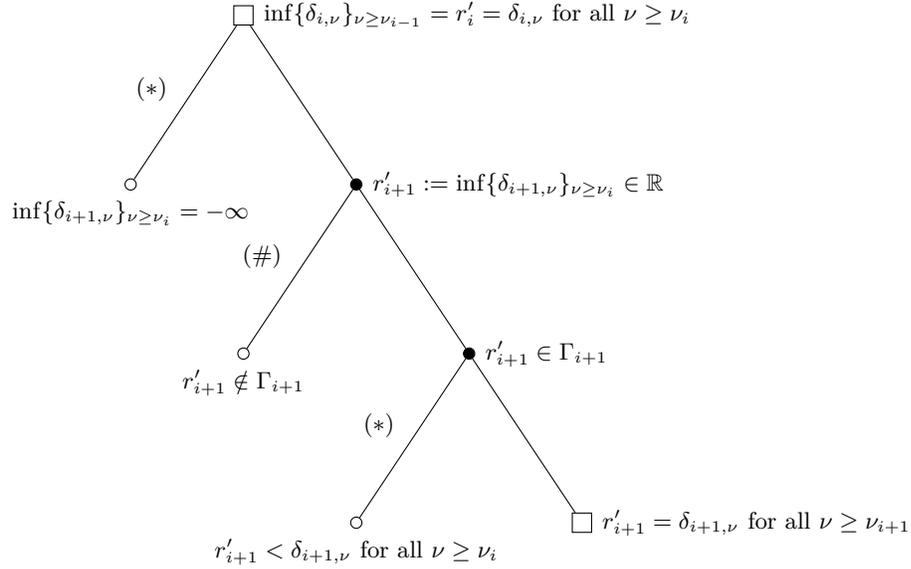

\pars Observe that $(\tilde{\RR}\dirsum\dotsc\dirsum\tilde{\RR})_{\lex}$ is a complete totally ordered set [\ref{Schroder ordered sets}, Proposition 9.16], where $\tilde{\RR}:= \{-\infty\}\union\RR\union\{\infty\}$. Hence the set $\mathcal{D}:= \{ (\d_{1,\nu}, \dotsc, \d_{n,\nu}) \}_{\nu<\l}$ has a unique supremum and infimum in $(\tilde{\RR}\dirsum\dotsc\dirsum\tilde{\RR})_{\lex}$. We define,
\[ \sup(E,\overline{v}):= \sup\mathcal{D} \text{ and } \inf(E,\overline{v}):= \inf\mathcal{D}. \]  

\pars We first assume that $E$ is a pcs of algebraic type. Hence $\mathcal{D}$ is strictly increasing. Let $r_1 := \sup\{\d_{1,\nu}\}_{\nu<\l}$ where $r_1\in\tilde{\RR}$ (we allow $r_1=\infty$ in this case). If $r_1>\d_{1,\nu}$ for all $\nu<\l$, then it is a straightforward check that $\sup (E,\overline{v}) = (r_1,-\infty,\dotsc,-\infty)$. If $r_1 = \d_{1,\nu}$ for all $\nu\geq\nu_1$, then we observe that $\{\d_{2,\nu}\}_{\nu\geq\nu_1}$ is an increasing sequence. Let $r_2:= \sup\{\d_{2,\nu}\}_{\nu\geq\nu_1}\in\tilde{\RR}$. If $r_2>\d_{2,\nu}$ for all $\nu\geq\nu_1$, then $\sup (E,\overline{v}) = (r_1,r_2,-\infty,\dotsc,-\infty)$. Otherwise, $r_2= \d_{2,\nu}$ for all $\nu\geq\nu_2$ and consequently $\{\d_{3,\nu}\}_{\nu\geq\nu_2}$ is an increasing sequence. Repeated iterations of this argument yield the following result:

\begin{Proposition}
	Assume that $E$ is a pcs of algebraic type in $(K,v)$ with $\rk vK=n$ and fix an extension $\overline{v}$ of $v$ to $\overline{K}$. Identify $\overline{v}\overline{K}$ with $(\G_1\dirsum\dotsc\dirsum\G_n)_{\lex}$ where $\G_i$ are subgroups of $\RR$. Consider the identification (\ref{eqn identification}). Then the following statements hold true:
	\sn (1) $\sup(E,\overline{v}) = (r_1,-\infty,\dotsc,-\infty)$ if and only if $\sup\{\d_{1,\nu}\}_{\nu<\l} = r_1 > \d_{1,\nu}$ for all $\nu<\l$.
	\n (2) $\sup(E,\overline{v}) = (r_1,\dotsc,r_i, -\infty,\dotsc,-\infty)$ for some $i\geq 2$ if and only if the following hold true:
	\begin{align*}
		\sup\{\d_{1,\nu}\}_{\nu<\l} = r_1 &= \d_{1,\nu} \text{ for all }\nu\geq \nu_1,\\
		\sup\{\d_{j,\nu}\}_{\nu\geq\nu_{j-1}} = r_j &= \d_{j,\nu}\text{ for all }\nu\geq \nu_j, \text{ for all }j=2,\dotsc,i-1,\\
		\sup\{\d_{i,\nu}\}_{\nu\geq\nu_{i-1}} = r_i &> \d_{i,\nu}\text{ for all }\nu\geq\nu_{i-1}.
	\end{align*}
	Observe that $r_j\in\G_j$ for all $j<i$.
	\n (3) $\sup(E,\overline{v}) \in\overline{v}\overline{K}$ if and only if $\sup(E,\overline{v}) = (r_1,\dotsc,r_n)$ where $r_n\in\G_n$ and $r_n > \d_{n,\nu}$ for all $\nu\geq\nu_{n-1}$.
\end{Proposition}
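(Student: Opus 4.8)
The plan is to isolate the entire computation into a single coordinatewise dichotomy for lexicographic suprema, and then read off all three statements by iterating it. Since $E$ is a pcs, the sequence $\{\d_\nu\}_{\nu<\l}$ is strictly increasing, so $\mathcal{D} = \{(\d_{1,\nu},\dotsc,\d_{n,\nu})\}_{\nu<\l}$ is strictly increasing in the complete ordered set $(\tilde{\RR}\dirsum\dotsc\dirsum\tilde{\RR})_{\lex}$; in particular its first coordinates $\{\d_{1,\nu}\}_{\nu<\l}$ form a weakly increasing sequence of reals, with $r_1 := \sup\{\d_{1,\nu}\}_{\nu<\l}\in\tilde{\RR}$. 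The first thing I would prove is a \textbf{reduction lemma}: for such a weakly increasing first coordinate, exactly one of two alternatives holds. Either $r_1 > \d_{1,\nu}$ for all $\nu$, in which case $\sup\mathcal{D} = (r_1,-\infty,\dotsc,-\infty)$; or $\d_{1,\nu}$ attains $r_1$ at some stage and, being weakly increasing and bounded above by $r_1$, then stays equal to $r_1$ for all $\nu\geq\nu_1$, in which case \[ \sup\mathcal{D} = (r_1,\, \sup\{(\d_{2,\nu},\dotsc,\d_{n,\nu})\}_{\nu\geq\nu_1}), \] the inner supremum being taken in the lexicographic completion of dimension $n-1$. In the first alternative $(r_1,-\infty,\dotsc,-\infty)$ dominates every $\d_\nu$ because its first coordinate is strictly larger; any strictly smaller vector $(b_1,\dotsc,b_n)$ is forced to have $b_1 < r_1$ (the second coordinate cannot fall below $-\infty$), and then $b_1$ fails to bound $\{\d_{1,\nu}\}$. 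In the second alternative the terms with $\nu\geq\nu_1$ are cofinal in $\mathcal{D}$ and share first coordinate $r_1$, and one checks directly that $(r_1,s)$ is the least upper bound, where $s$ is the supremum of their tails.

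With the reduction lemma in hand, the three statements follow by bookkeeping. For statement $(1)$ the backward direction is exactly the first alternative; for the forward direction I would note that $\d_{2,\nu}\in\G_2\subseteq\RR$ forces $\sup\{\d_{2,\nu}\}_{\nu\geq\nu_1} > -\infty$, so the second alternative never produces $-\infty$ in slot two, whence a supremum of shape $(r_1,-\infty,\dotsc,-\infty)$ can only come from the first alternative, and the first coordinate of $\sup\mathcal{D}$ is always $\sup\{\d_{1,\nu}\}$. Statement $(2)$ is obtained by applying the second alternative $i-1$ times, which forces $r_j=\d_{j,\nu}$ eventually and hence $r_j\in\G_j$ for $j<i$, and then invoking the first alternative at level $i$. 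Statement $(3)$ follows because membership $\sup(E,\overline{v})\in\overline{v}\overline{K}=(\G_1\dirsum\dotsc\dirsum\G_n)_{\lex}$ forces every coordinate to be finite and to lie in the corresponding $\G_j$; by $(1)$ and $(2)$ this excludes every shape carrying a trailing $-\infty$, so all $n$ coordinates must arise through the second alternative at levels $1,\dotsc,n-1$. At level $n$ the ``attained'' alternative is impossible, since $r_n=\d_{n,\nu}$ eventually together with the eventual constancy of the earlier coordinates would make $\{\d_\nu\}_{\nu<\l}$ ultimately constant, contradicting strict monotonicity. Hence level $n$ uses the ``not attained'' alternative, leaving precisely $r_n\in\G_n$ with $r_n>\d_{n,\nu}$ for all $\nu\geq\nu_{n-1}$.

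The genuinely delicate point, and the step I would write out most carefully, is the reduction lemma itself: the clean split for a \emph{weakly} (not strictly) increasing sequence of first coordinates, together with the identity $\sup\{(r_1,w_\nu)\}=(r_1,\sup\{w_\nu\})$ in lexicographic order. Everything after that is routine order-arithmetic in $\tilde{\RR}^n$. I would lean on two facts that are already available: that $(\tilde{\RR}\dirsum\dotsc\dirsum\tilde{\RR})_{\lex}$ is complete, so all the suprema in sight exist, and that each $\G_j\subseteq\RR$, which guarantees every finite coordinate is an honest real bounded strictly above $-\infty$ and is exactly what prevents spurious $-\infty$ entries in the second alternative.
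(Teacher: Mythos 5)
Your proposal is correct and follows essentially the same route as the paper: the paper's own justification is exactly this coordinatewise dichotomy (either $r_1>\d_{1,\nu}$ for all $\nu$, giving $\sup(E,\overline{v})=(r_1,-\infty,\dotsc,-\infty)$, or $r_1=\d_{1,\nu}$ eventually, in which case one passes to the tail $\{\d_{2,\nu}\}_{\nu\geq\nu_1}$ and iterates), together with the same terminal observation that the attained alternative at level $n$ would contradict the strict monotonicity of $\{\d_\nu\}_{\nu<\l}$. Your reduction lemma, including the identity $\sup\{(r_1,w_\nu)\}=(r_1,\sup\{w_\nu\})$ in the lexicographic completion, just makes explicit what the paper calls a ``straightforward check,'' so no gap remains.
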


The dual result when $E$ is a pds in $(K,v)$ is presented below:

\begin{Proposition}
	Assume that $E$ is a pds in $(K,v)$ with $\rk vK=n$ and fix an extension $\overline{v}$ of $v$ to $\overline{K}$. Identify $\overline{v}\overline{K}$ with $(\G_1\dirsum\dotsc\dirsum\G_n)_{\lex}$ where $\G_i$ are subgroups of $\RR$. Consider the identification (\ref{eqn identification}). Then the following statements hold true:
	\sn (1) $\inf(E,\overline{v}) = (r_1^\prime,\infty,\dotsc,\infty)$ if and only if $\inf\{\d_{1,\nu}\}_{\nu<\l} = r_1^\prime < \d_{1,\nu}$ for all $\nu<\l$.
	\n (2) $\inf(E,\overline{v}) = (r_1^\prime,\dotsc,r_i^\prime, \infty,\dotsc,\infty)$ for some $i\geq 2$ if and only if the following hold true:
	\begin{align*}
		\inf\{\d_{1,\nu}\}_{\nu<\l} = r_1^\prime &= \d_{1,\nu} \text{ for all }\nu\geq \nu_1, \\
		\inf\{\d_{j,\nu}\}_{\nu\geq\nu_{j-1}} = r_j^\prime &= \d_{j,\nu} \text{ for all }\nu\geq \nu_j, \text{ for all }j=2,\dotsc,n-1,\\
		\inf\{\d_{i,\nu}\}_{\nu\geq\nu_{i-1}} = r_i^\prime &< \d_{i,\nu}\text{ for all }\nu\geq\nu_{i-1}.
	\end{align*}
	Observe that $r_j^\prime\in\G_j$ for all $j<i$.
	\n (3) $\inf(E,\overline{v}) \in\overline{v}\overline{K}$ if and only if $\inf(E,\overline{v}) = (r_1^\prime,\dotsc,r_n^\prime)$ where $r_n^\prime\in\G_n$ and $r_n^\prime < \d_{n,\nu}$ for all $\nu\geq\nu_{n-1}$.
\end{Proposition}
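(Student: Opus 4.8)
The plan is to exploit the exact duality with the preceding proposition. Since $E$ is a pds, the family $\mathcal{D} = \{(\d_{1,\nu},\dotsc,\d_{n,\nu})\}_{\nu<\l}$ is strictly \emph{decreasing} in the complete ordered set $(\tilde{\RR}\dirsum\dotsc\dirsum\tilde{\RR})_{\lex}$, so I would compute its infimum coordinate by coordinate, running the same iteration used for the supremum in the pcs case but with the roles of $\sup$/$\inf$ and of $\infty$/$-\infty$ interchanged. The whole content therefore reduces to a clean description of the lexicographic infimum, which I would record first.

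The engine is the following dichotomy at the top level: set $r_1^\prime := \inf\{\d_{1,\nu}\}_{\nu<\l}\in\tilde{\RR}$. Because $\{\d_{1,\nu}\}_{\nu<\l}$ is weakly decreasing, exactly one of two things happens. Either $r_1^\prime<\d_{1,\nu}$ for all $\nu<\l$, or $r_1^\prime$ is attained, and then $\d_{1,\nu}=r_1^\prime$ for all $\nu\geq\nu_1$ for some $\nu_1$. In the first case $(r_1^\prime,\infty,\dotsc,\infty)$ is a lower bound of $\mathcal{D}$, since its first coordinate is strictly below every $\d_{1,\nu}$; and any strictly larger tuple must have first coordinate exceeding $r_1^\prime=\inf\{\d_{1,\nu}\}$, hence fails to lie below some $\d_\nu$. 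Thus $\inf(E,\overline{v})=(r_1^\prime,\infty,\dotsc,\infty)$, which is statement $(1)$.

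In the second case the tail $\{\d_\nu\}_{\nu\geq\nu_1}$ is coinitial in $\mathcal{D}$, hence has the same infimum, and on it the first coordinate is constantly $r_1^\prime$ while the tuples remain strictly decreasing; so $\{(\d_{2,\nu},\dotsc,\d_{n,\nu})\}_{\nu\geq\nu_1}$ is strictly decreasing in the lexicographic product with one fewer summand. A short check gives $\inf(E,\overline{v})=(r_1^\prime,s_2,\dotsc,s_n)$, where $(s_2,\dotsc,s_n)$ is the infimum of this shorter family. Iterating the dichotomy over the nested tails $\nu\geq\nu_1,\nu\geq\nu_2,\dotsc$ terminates after at most $n$ steps and yields the form $(r_1^\prime,\dotsc,r_i^\prime,\infty,\dotsc,\infty)$: the first $i-1$ coordinates are attained infima that have become constant on the relevant tails, and the $i$-th coordinate is a non-attained infimum with $r_i^\prime<\d_{i,\nu}$. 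Reading off the stopping conditions gives statement $(2)$, and the fact that each attained infimum $r_j^\prime$ equals an actual coordinate $\d_{j,\nu}$, hence lies in $\G_j$, gives its concluding remark.

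For $(3)$ I would observe that $\inf(E,\overline{v})\in\overline{v}\overline{K}=(\G_1\dirsum\dotsc\dirsum\G_n)_{\lex}$ holds precisely when the tuple has no coordinate equal to $\pm\infty$ and each coordinate lies in the corresponding $\G_j$. By $(1)$ and $(2)$ a trailing $\infty$ appears exactly when the iteration stops at some level $i<n$, so membership in $\overline{v}\overline{K}$ forces the iteration to reach level $n$, giving $\inf(E,\overline{v})=(r_1^\prime,\dotsc,r_n^\prime)$ with $r_1^\prime,\dotsc,r_{n-1}^\prime$ automatically in $\G_1,\dotsc,\G_{n-1}$. On the final tail $\{\d_{n,\nu}\}$ is strictly decreasing, so $r_n^\prime<\d_{n,\nu}$ holds automatically, and then lying in $\overline{v}\overline{K}$ is equivalent to $r_n^\prime\in\G_n$, which is $(3)$. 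The main obstacle I anticipate is the bookkeeping of the lexicographic infimum itself: correctly pinning down that the filler coordinates must be the top element $\infty$ rather than $-\infty$, verifying the greatest-lower-bound property at each level, and tracking the nested tails $\nu\geq\nu_j$. Once the lex-infimum description is in place, the three statements follow by precisely the iteration already carried out in the pcs case.
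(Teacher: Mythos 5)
Your proposal is correct and follows essentially the same route as the paper: the paper obtains this proposition as the exact dual of the pcs case, by running the same coordinate-by-coordinate iteration on the strictly decreasing family $\mathcal{D}$ with $\sup$/$\inf$ and $-\infty$/$\infty$ interchanged, using the attained/non-attained dichotomy at each level and the nested tails $\nu\geq\nu_j$. Your verification of the lexicographic greatest-lower-bound property (and the observation that at level $n$ the infimum is automatically non-attained) simply spells out what the paper leaves as a "straightforward check."
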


Observe that if $E$ is a pcs of algebraic type with $\sup(E,\overline{v})\in\overline{v}\overline{K}$, then we arrive at a hollow node in Figure \ref{fig E pcs i level} at the $n$-th level and at the square node in each preceding level. Analogously, if $E$ is a pds with $\inf(E,\overline{v})\in\overline{v}\overline{K}$, then we arrive at a hollow node in Figure \ref{fig E pds i level} at the $n$-th level and at the square node in each preceding level. We thus have the following result:

\begin{Theorem}\label{Thm rank v_E}
	Assume that $(K,v)$ is a valued field of finite rank and fix an extension $\overline{v}$ of $v$ to $\overline{K}$. Further, assume that $E$ is a pms in $(K,v)$ of one of the following types:
	\sn (i) $E$ is a Cauchy sequnce,
	\n (ii) $E$ is a pcs of algebraic type with $\sup(E,\overline{v})\in\overline{v}\overline{K}$,
	\n (iii) $E$ diverges to infinity,
	\n (iv) $E$ is a pds with $\inf(E,\overline{v})\in\overline{v}\overline{K}$.\\
	Then,
	\[ \rk v_E K(X) = \rk vK +1. \]
	If $\rk vK=1$, then the above conditions are also necessary.
\end{Theorem}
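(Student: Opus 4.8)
The statement is essentially an assembly of the exhaustive case analysis and the two propositions carried out above, so the plan is to match each hypothesis to the appropriate leaf of the tree diagrams. Recall that if $E$ is a pcs of transcendental type or a pcts then $(K(X)|K,v_E)$ is immediate, respectively residue transcendental, so that $v_E K(X)=vK$; thus the only situations in which the rank can change are $E$ a pcs of algebraic type or $E$ a pds, which are exactly the settings in which Figures \ref{fig E pcs base level}--\ref{fig E pds i level} were built.

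For sufficiency I would argue case by case. In case $(i)$, a Cauchy sequence is a pcs with $\{\d_\nu\}_{\nu<\l}$ cofinal in $vK$, equivalently $\sup\{\d_{1,\nu}\}_{\nu<\l}=\infty$; this is the $(*)$-marked branch at the base level of Figure \ref{fig E pcs base level}, whence $\rk v_E K(X)=\rk vK+1$. In case $(ii)$, the first of the two preceding propositions shows that $\sup(E,\overline{v})\in\overline{v}\overline{K}$ forces $\sup\{\d_{i,\nu}\}=r_i=\d_{i,\nu}$ ultimately for each $i<n$ together with $r_n\in\G_n$ and $r_n>\d_{n,\nu}$ ultimately; read through Figures \ref{fig E pcs base level} and \ref{fig E pcs i level} this says we descend through the square node at each of the first $n-1$ levels and then reach a $(*)$-marked hollow node at the $n$-th level, again giving $\rk v_E K(X)=\rk vK+1$.

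Cases $(iii)$ and $(iv)$ are identical after replacing suprema by infima, using the dual Figures \ref{fig E pds base level} and \ref{fig E pds i level} and the second proposition; this is the duality already recorded in Theorem \ref{Thm pcs alg equivalences}$(iii)$ and Theorem \ref{Thm E pds}$(iii)$. For the converse I would take $\rk vK=1$, so $n=1$ and $\overline{v}\overline{K}=\G_1\subseteq\RR$, and assume $\rk v_E K(X)=\rk vK+1$. By the recall above $E$ must then be a pcs of algebraic type or a pds. When $n=1$ the diagram has only its base level, since the square-node branch would force $\{\d_\nu\}_{\nu<\l}$ to be ultimately constant and so contradict its strict monotonicity. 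Hence the rank can increase only along a $(*)$ branch, which for a pcs of algebraic type means $\sup\{\d_{1,\nu}\}_{\nu<\l}=\infty$ (a Cauchy sequence, $(i)$) or $r_1\in\G_1=\overline{v}\overline{K}$ with $r_1>\d_{1,\nu}$ for all $\nu$ (that is, $\sup(E,\overline{v})\in\overline{v}\overline{K}$, $(ii)$), and dually for a pds gives $(iii)$ or $(iv)$; the $(\#)$ branch $r_1\in\RR\setminus\G_1$ leaves the rank unchanged and matches none of the four conditions.

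Since the substantive computations were already done in the case analysis preceding the statement, there is no genuine obstacle; the only points needing care are the verification that the square-node branch is vacuous at the top level when $n=1$, which rests on the strict monotonicity of $\{\d_\nu\}_{\nu<\l}$, and the translation of the $(*)$-leaf conditions into the clean hypotheses $(i)$--$(iv)$ via the two preceding propositions.
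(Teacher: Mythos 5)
Your proposal is correct and follows essentially the same route as the paper: the theorem there is stated as the summary of the preceding case analysis, with the sentence immediately before it performing exactly your matching of condition (ii) (resp.\ (iv)) to the square nodes at levels $1,\dotsc,n-1$ followed by a $(*)$-marked hollow node at level $n$, and of (i) (resp.\ (iii)) to the $(*)$-branch at the base level. Your explicit treatment of necessity for $\rk vK=1$ (the square node being vacuous at the final level by strict monotonicity of $\{\d_\nu\}_{\nu<\l}$, and the $(\#)$-branch being the only rank-preserving alternative) is likewise the argument the paper leaves implicit in its exhaustive tree analysis.
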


It is clear from our preceding discussions that the above conditions are sufficient but not necessary for rank incrementation for valued fields of higher rank. An example illustrating this fact is furnished below.

\begin{Example}
	Let $(K,v)$ be a valued field with $vK  = (\G\dirsum\ZZ)_{\lex}$ where $\G$ is a nontrivial subgroup of $\QQ$. Fix an extension $\overline{v}$ of $v$ to $\overline{K}$. Then $\overline{v}\overline{K} = (\QQ\dirsum\QQ)_{\lex}$. Take an element $g\in\G$ and set $\d_i:= (g,i)$ for all $i\in\NN$. Take $z_i\in K$ such that $vz_i = \d_i$. By definition, $E:= \{z_i\}_{i\in\NN}$ is a pcs in $(K,v)$ with $0$ as a limit, hence $E$ is a pcs of algebraic type. Observe that $\sup (E,\overline{v}) = (g,\infty)\notin\overline{v}\overline{K}$. Further, we observe that we arrive at the square node in Figure \ref{fig E pcs base level} and we arrive at the top left hollow node in Figure \ref{fig E pcs i level} at the second level. As a consequence, $\rk v_E K(X) = \rk vK +1 = 3$.
\end{Example}

\end{document}